\documentclass[10pt]{amsart}
\usepackage[toc,page]{appendix}
\usepackage{amsfonts}
\usepackage{epic}
\usepackage{mathrsfs}
\usepackage{amsmath}
\usepackage{amssymb}
\usepackage{amscd}
\usepackage{amsthm}
\usepackage{enumerate}
\usepackage[dvips]{graphicx}
\usepackage[all,cmtip]{xy}
\usepackage{hyperref}
\usepackage{cases}
\usepackage{fancybox}
\usepackage{ulem}
\usepackage{color}
\usepackage[all]{xy}
\oddsidemargin=10pt \evensidemargin=6pt
\textwidth 15true cm

\newcommand{\End}{{\rm End}}

\newcommand{\Hom}{{\rm Hom}}

\newtheorem{Thm}{Theorem}[section]
\newtheorem{Lem}[Thm]{Lemma}
\newtheorem{Def}[Thm]{Definition}
\newtheorem{Cor}[Thm]{Corollary}
\newtheorem{Prop}[Thm]{Proposition}
\newtheorem{Ex}[Thm]{Example}
\newtheorem{Rem1}[Thm]{Remark}

\newtheorem{Cor-Def}[Thm]{Corollary-Definition}
\newtheorem{fact}[Thm]{Fact}


\usepackage[margin=1.2in,centering]{geometry}
\raggedbottom
\title[Exceptional cycles for perfect complexes over gentle algebras]
{Exceptional cycles for perfect complexes \\ over gentle algebras}
\thanks{$^*$  \ Corresponding author}
\thanks{Supported by the NNSFC (National Natural Science Foundation of China) Grant No. 11971304.}
\author[Peng Guo, Pu Zhang]{Peng Guo, Pu Zhang$^*$}
\begin{document}

\begin{abstract} Exceptional cycles in a triangulated category $\mathcal T$ with Serre duality, introduced by N. Broomhead, D. Pauksztello, and D. Ploog, have a notable impact on the global structure of $\mathcal T$. In this paper we show that if $\mathcal T$ is homotopy-like, then any exceptional $1$-cycle is indecomposable and at the mouth; and any object in an exceptional $n$-cycle with $n\ge 3$ is at the mouth. Let $A$ be an indecomposable gentle $k$-algebra with $A\ne k$.
The Hom spaces between string complexes at the mouth are explicitly determined.
The main result classifies ``almost all" the exceptional cycles in $K^b(A\mbox{-}{\rm proj})$, using characteristic components and their AG-invariants, except those  exceptional $1$-cycles which are band complexes. Namely, the mouth of a characteristic component $C$ of $K^b(A\mbox{-}{\rm proj})$ forms a unique exceptional cycle in $C$, up to an equivalent relation $\approx$;
if the quiver of $A$ is not of type $A_3$, this gives all the exceptional $n$-cycle in $K^b(A\mbox{-}{\rm proj})$ with $n\ge 2$, up to $\approx$; and
a string complex is an exceptional $1$-cycle if and only if it is at the mouth of a characteristic component with  {\rm AG}-invariant $(1, m)$. However, a band complex at the mouth is possibly not an exceptional $1$-cycle.
\vskip5pt  Keywords: exceptional cycle, gentle algebra, homotopy-like triangulated category, Auslander-Reiten triangle, at the mouth, string complex, characteristic component, {\rm AG}-invariant.
\end{abstract}
\maketitle

\section{\bf Introduction}

\subsection{} An exceptional cycle in a triangulated category $\mathcal T$ with Serre duality has been introduced by N. Broomhead, D. Pauksztello, and D. Ploog [BPP].
It is a generalization of a spherical object (see e.g. [ST], [HKP1]), provides 
an invariant of triangle-equivalences, and closely relates to the global structure of $\mathcal T$.
Its importance also lies in the fact that an exceptional cycle induces an autoequivalences of $\mathcal T$ ([BPP, Theorem 4.5]), which is a generalization of tubular mutation in [M].

\vskip5pt

Gentle algebras, introduced by I. Assem and A. Skowronski [AS], have related to different topics in mathematics. It is closed under derived equivalence by J. Schr\"{o}er and A. Zimmermann [SZ], and exactly the class of finite-dimensional algebras $A$ such that the repetitive algebras $\hat A$ are special biserial ([AS], [PS]).
It appears in D. Vossieck's classification of algebras with discrete derived categories ([V]),
and in cluster tilted algebras (see e.g. [ABCP], [AG]). The combinatorial description of Auslander-Reiten triangles of $K^b(A\mbox{-}{\rm proj})$ has been given by G. Bobi\'nski \cite{B}. Recently,
a geometric derived realization and complete derived invariants of gentle algebras are given in [OPS], [APS] and [O].

\vskip5pt

The aim of this paper is to study exceptional cycles in an indecomposable homotopy-like triangulated category with Serre duality,
and to determine all the exceptional cycles in homotopy category $K^b(A\mbox{-}{\rm proj})$, where $A$ is an indecomposable finite-dimensional gentle algebra.

\subsection{} Throughout, $k$ is an algebraically closed field,
$\mathcal{T}$ is a $k$-linear Hom-finite Krull-Schmidt triangulated category with Serre functor (if no otherwise stated). Let $S: \mathcal{T}\longrightarrow\mathcal{T}$ be the right Serre functor. So,
for objects $X$ and $Y$ in $\mathcal T$, there is a $k$-linear isomorphism
$\Hom_\mathcal{T}(X, Y)\cong \Hom_\mathcal{T}(Y, S(X))^*,$
which is functorial in $X$ and $Y$, where $-^*$ is the $k$-dual $\Hom_k(-, k)$. Then $S$ is a triangle-equivalence ([Boc, Appendix]) and
$\mathcal{T}$ has Auslander-Reiten triangles with $S = \tau [1]$ on objects, where
$\tau$ is the Auslander-Reiten translate (see [RV, Theorem I. 2.4]).

\vskip5pt

An indecomposable object of $\mathcal T$ is said to be {\it at the mouth}, if the middle term of the Auslander-Reiten triangle  ending at it is indecomposable, or equivalently,
the middle term of the Auslander-Reiten triangle starting from it is indecomposable.

\vskip5pt

A triangulated category $\mathcal T$ will be said to be {\it homotopy-like}, if $M\ncong M[i]$ for any indecomposable object $M\in \mathcal{T}$ and  for all $i\ne 0$.
It is clear that $K^b(\mathcal B)$ is homotopy-like, where $\mathcal B$ is an arbitrary additive subcategory of an abelian category, which is closed under direct summands.
So $K^b(A\mbox{-}{\rm proj})$ is homotopy-like. On the other hand, if $\Lambda$ is a finite-dimensional self-injective algebra admitting a non-zero $\Omega$-periodic module, then the stable category
$\Lambda\mbox{-}\underline {\rm mod}$ is not homotopy-like.

\subsection{} Throughout, $A$ is an indecomposable finite-dimensional  gentle $k$-algebra.
Let $A$-mod be the category of finitely generated left $A$-modules, $\mathcal D^b(A)$ the bounded derived category of $A$-mod,
and  $K^b(A\mbox{-}{\rm proj})$  ($K^b(A\mbox{-}{\rm inj})$, respectively) the bounded homotopy category of finitely generated projective (injective, respectively) $A$-modules.
The Nakayama functor $(\Hom_A(-, A))^*$ induces componentwisely an equivalence $S: K^b(A\mbox{-}{\rm proj})\cong K^b(A\mbox{-}{\rm inj})$, so that for $P^\bullet\in K^b(A\mbox{-}{\rm proj})$
 and $X\in \mathcal D^b(A)$
there is a $k$-linear functorial isomorphism  in both arguments ([H1, p.37]): $$\Hom_{\mathcal D^b(A)}(P, X)\cong \Hom_{\mathcal D^b(A)}(X, S(P))^*.$$ Since $A$ is a Gorenstein algebra (\cite{GR}),
$K^b(A\mbox{-}{\rm proj}) = K^b(A\mbox{-}{\rm inj})$ in $\mathcal D^b(A)$ and $S$ is the Serre functor of $K^b(A\mbox{-}{\rm proj})$ ([H2]). So
$K^b(A\mbox{-}{\rm proj})$ is a $k$-linear, Hom-finite Krull-Schmidt triangulated category with Serre functor $S$, and having Auslander-Reiten triangles.

\subsection{} Let $d$ be an integer. An object $E\in\mathcal T$ is a $d$-Calabi-Yau object if $S(E)\cong E[d]$.
In general $d$-Calabi-Yau objects are not closed under taking direct summands (see [CZ]). For objects $X, Y\in \mathcal{T}$, let $\Hom^{\bullet}(X, Y)$ be the complex of $k$-vector spaces with
$\Hom^i(X, Y) = \Hom_{\mathcal{T}}(X, Y[i])$ and with zero differentials. Thus
$\Hom^{\bullet}(X, Y)=\bigoplus\limits_i \Hom_{\mathcal{T}}(X, Y[i])[-i]$.

\vskip5pt

By definition, {\it an exceptional $1$-cycle} in $\mathcal{T}$ is a $d$-Calabi-Yau object $E$ for some integer $d$, such that $\Hom^{\bullet}(E,E) \cong  k\oplus k[-d]$.
It is also called {\it a spherical object} for example in [HKP1] and [HKP2]. Here we use the name of an exceptional $1$-cycle in \cite{BPP}, for the unification of
an exceptional $n$-cycle with $n\ge 1$. Note that the terminology ``a spherical object" has (slight) different meanings in the literatures, see for example [ST], [KYZ], [CP].
For the definition of an exceptional $n$-cycle with $n\ge 2$ we refer to Subsection 2.1.

\vskip5pt

Let $E$ be an exceptional $1$-cycle which is $d$-Calabi-Yau. Then $d$ is unique. If $d\ne 0$, or $d = 0$ and $\End_\mathcal T(E) \cong k[x]/\langle x^2\rangle$ as algebras, then $E$ is indecomposable; if $d=0$ and $\End_\mathcal T(E) \cong k\times k$ as algebras, then $E$ is decomposable.
However we have

\begin{Thm}\label{indsphericalmouth}  Let $\mathcal T$ be an indecomposable $k$-linear Hom-finite Krull-Schmidt triangulated category with Serre functor. Assume that $\mathcal T$ is homotopy-like. Then

\vskip5pt

$(1)$ \ Any exceptional $1$-cycle is indecomposable, and at the mouth.

\vskip5pt

$(2)$ \ Any object in an exceptional $n$-cycle with $n\ge 3$ is at the mouth.
\end{Thm}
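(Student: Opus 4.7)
The strategy is to reduce both ``at the mouth'' conclusions to a single cohomological computation on the Auslander--Reiten triangle, and to dispose of the indecomposability claim in (1) by first applying that computation to the possible summands.

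For the ``at the mouth'' halves of (1) and (2), let $E$ (respectively $E_i$) be an indecomposable exceptional object and consider the Auslander--Reiten triangle starting at it. In case (1) this triangle reads
$$E \longrightarrow B \longrightarrow E[1-d] \longrightarrow E[1]$$
using $S(E) \cong E[d]$, while in case (2) it reads
$$E_i \longrightarrow B_i \longrightarrow E_{i-1}[1-l_{i-1}] \longrightarrow E_i[1]$$
where the integers $l_j$ arise from the Serre-duality chain $S(E_j) \cong E_{j+1}[l_j]$ of the $n$-cycle. Apply the cohomological functor $\Hom_{\mathcal T}(E,-)$ (resp.\ $\Hom_{\mathcal T}(E_i,-)$). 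In case (2), the axiom $\Hom^\bullet(E_i,E_j)=0$ for $j\not\equiv i,i+1\pmod n$ together with $n\ge 3$ forces $\Hom^\bullet(E_i,E_{i-1})=0$, so the long exact sequence collapses to $\Hom(E_i,B_i)\cong \End(E_i)=k$. In case (1), the identity $\Hom^\bullet(E,E)\cong k\oplus k[-d]$ handles $d\neq 1$ directly by giving $\Hom(E,E[1-d])=0=\Hom(E,E[-d])$; for $d=1$ the connecting map $\Hom(E,E)\to\Hom(E,E[1])$ is composition with the nonzero class of the triangle and hence an isomorphism, so again the long exact sequence forces $\Hom(E,B)=k$; the borderline case $d=0$ with $\End(E)\cong k[x]/\langle x^2\rangle$ is handled by the same procedure, the radical element of $\End(E)$ accounting for the extra dimension. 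In all cases each summand of the middle term contributes a positive dimension to this $\Hom$-space through its irreducible map from $E$, so the bound forces the middle term to be indecomposable.

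For the indecomposability statement in (1), the discussion preceding the theorem reduces the problem to the case $d=0$, $\End_{\mathcal T}(E)\cong k\times k$, in which $E\cong X\oplus Y$ with $X,Y$ indecomposable, $\End(X)=\End(Y)=k$, and $\Hom^i(X,Y)=\Hom^i(Y,X)=0$ for every $i$. Since $S(E)\cong E$, Krull--Schmidt forces $S$ to permute $\{X,Y\}$; but Serre duality gives $\Hom(X,S(X))\cong\End(X)^*\neq 0$, so the option $S(X)=Y$ is incompatible with $\Hom(X,Y)=0$. Hence $S(X)=X$ and $S(Y)=Y$, so $X$ and $Y$ are themselves indecomposable $0$-Calabi--Yau exceptional $1$-cycles; the ``at the mouth'' part already established applies to each, and $\tau X = X[-1]$ together with the homotopy-like hypothesis (which ensures $X,X[-1],X[-2],\ldots$ are pairwise non-isomorphic) pins down the Auslander--Reiten component of $X$ well enough to see that every mouth object of that component is a shift of $X$. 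The total orthogonality $\Hom^i(X,Y)=0$ then prevents $Y$ from being a shift of $X$, so $Y$ lies in a different Auslander--Reiten component, contradicting the indecomposability of $\mathcal T$.

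\textbf{Main obstacle.} The nontrivial step is the very last one: translating the total orthogonality of $X$ and $Y$ into the statement that they lie in distinct Auslander--Reiten components. One has to produce enough of the Auslander--Reiten component of $X$ to locate all of its mouth objects; concretely, one uses the already proven indecomposability of the middle term, iterates the mesh construction, and combines $\tau X=X[-1]$ with the homotopy-like hypothesis to identify the component as a $\mathbb Z A_\infty$-type translation quiver whose mouth objects are exactly the shift-orbit $\{X[i]\}_{i\in\mathbb Z}$. The indecomposability hypothesis on $\mathcal T$ then supplies the contradiction via the standard fact that a Hom-finite Krull--Schmidt triangulated category with a Serre functor is indecomposable if and only if its Auslander--Reiten quiver is connected.
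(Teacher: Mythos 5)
Your treatment of the ``at the mouth'' claims is correct and takes a genuinely different, more elementary route than the paper. The paper decomposes ${\rm Cone}(g_1f_1)$ via Neeman's homotopy-cartesian lemma and shows the composite through one summand of the middle term is nonzero (Lemmas \ref{con1} and \ref{gfnot0}); you instead apply $\Hom(E,-)$ to the Auslander--Reiten triangle, use ${\rm (E1)}$/${\rm (E3)}$ to collapse the long exact sequence to $\dim_k\Hom(E,B)=1$, and observe that each indecomposable summand of $B$ receives a nonzero irreducible map from $E$ (nonzero by minimality of the left almost split map), so $B$ has at most one summand. This works in all cases, including $n\ge 3$, and is cleaner. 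One small point you should make explicit: for $d=0$ with $\End(E)\cong k[x]/\langle x^2\rangle$ your computation of the image of the connecting map presupposes that $w[-1]\in\End(E)$ is a non-unit, i.e.\ that $B\ne 0$; this follows because the nonzero radical element of $\End(E)$ is not a split mono and hence must factor through the middle term.

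The indecomposability argument in (1), however, has a genuine gap. After correctly forcing $S(X)\cong X$ and $S(Y)\cong Y$, you assert that $X$ and $Y$ are exceptional $1$-cycles to which the mouth argument applies, that the component of $X$ is of type $\Bbb Z A_\infty$ with mouth $\{X[i]\}$, and finally that disconnectedness of the Auslander--Reiten quiver contradicts indecomposability of $\mathcal T$. None of these steps survives scrutiny. First, $\Hom^\bullet(X,X)\cong k$ (not $k\oplus k$), so $X$ is not an exceptional $1$-cycle; worse, since $\Hom(X,S(X))\cong\End(X)\cong k$ the connecting morphism of the Auslander--Reiten triangle ending at $X$ is an isomorphism, so that triangle has \emph{zero} middle term --- $X$ is not at the mouth and its component is a single $\tau$-orbit with no arrows, not $\Bbb Z A_\infty$. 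Second, the ``standard fact'' that indecomposability of $\mathcal T$ is equivalent to connectedness of the Auslander--Reiten quiver is false: $\mathcal D^b({\rm coh}\,\Bbb P^1)$ is indecomposable, Hom-finite, Krull--Schmidt with Serre functor, yet its Auslander--Reiten quiver has infinitely many components. What actually closes the argument is the Reiten--Van den Bergh dichotomy, which is the paper's Lemma \ref{rv}: in an indecomposable $\mathcal T\ncong\mathcal D^b(k)$ every indecomposable $M$ admits a nonzero non-isomorphism $M\to S(M)$ (equivalently, a degenerate Auslander--Reiten triangle with zero middle forces $\langle M\rangle\cong\mathcal D^b(k)$ to split off as a direct triangulated factor). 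Applied to $X$ with $S(X)\cong X$ and $\End(X)\cong k$ this is an immediate contradiction, since every nonzero endomorphism of $X$ is then invertible. Your argument needs this lemma (or a proof of the splitting-off statement in this degenerate case); the orthogonality of $X$ and $Y$ alone does not yield a decomposition of $\mathcal T$.
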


{\bf Remark} \ $(1)$ \ If $\mathcal T$ is not indecomposable, then an exceptional $1$-cycle may be decomposable and not at the mouth.
For example, $(k, k) = (k, 0)\oplus (0, k)$ is an exceptional $1$-cycle in $\mathcal D^b(k)\times \mathcal D^b(k)$, and it is not at the mouth.

$(2)$ \ An object in an exceptional $2$-cycle may be not at the mouth. For example, let $A$ be the path algebra of the quiver $1 \rightarrow 2 \rightarrow 3.$
Then $\mathcal D^b(A)$ has an exceptional $2$-cycle $(P(2), I(2))$, where $P(2)$ (respectively, $I(2)$) is the indecomposable projective (respectively, injective) $A$-module at vertex $2.$
However, $P(2)$ and $I(2)$ are not at the mouth of $\mathcal D^b(A)$.

\vskip5pt

The proof of Theorem \ref{indsphericalmouth} will be given in Section 3. The main ideas in the proof are
to use a special non-zero non isomorphism from an indecomposable object $M$ to $S(M)$, constructed in [RV] (see Lemma \ref{rv} below), and the mapping cone of the composition of morphisms in a homotopy cartesian square
(see Lemmas \ref{con1} and \ref{gfnot0} below).

\subsection{} To determine the exceptional cycles in $K^b(A\mbox{-}{\rm proj})$, we need
the notion of a characteristic component of $K^b(A\mbox{-}{\rm proj})$, to determine its shape, and to introduce its {\rm AG}-invariant.

\vskip5pt

An indecomposable object of $K^b(A\mbox{-}{\rm proj})$ is either a string complex or a band complex (see [BM]; also [B]).
The description of Auslander-Reiten triangles of $K^b(A\mbox{-}{\rm proj})$ (\cite[Main Theorem]{B}) shows that a connected component $C$ of $K^b(A\mbox{-}{\rm proj})$ either consists of string complexes, or consists of band complexes. It will be called {\it a characteristic component}, if $C$ contains a string complex at the mouth (thus $C$  consists of string complexes). By some results in [XZ], [Sch], [V], and [BR],
$C$ is of the form $$\Bbb ZA_n \ (n\ge 2), \ \ \ \ \Bbb ZA_{\infty}, \ \ \ \ \Bbb ZA_{\infty}/\langle \tau^n\rangle \ \ (n\ge 1).$$
Since up to shift there are only finitely many string complexes at
the mouth, by the shape of $C$, there exists a unique pair $(n, m)$ of integers, such that $\tau^{n}X\cong X[m-n]$, for any indecomposable object $X$ at the mouth of $C$. This pair $(n, m)$ will be called  Alaminos-Geiss invariant (or AG invariant in short) of $C$. For details see Section 4.

\vskip5pt

In \cite{AAG} a characteristic component and its invariant have been defined for $\hat{A}$-$\underline{\rm mod}$, where $\hat{A}$ is the repetitive algebra.
Since $A$ is Gorenstein, the Happel embedding $K^b(A\mbox{-}{\rm proj})\hookrightarrow \hat{A}\mbox{-}\underline{\rm mod}$ preserves the Auslander-Reiten components (\cite[Corollary\ 5.3]{HKR}).
Thus, a characteristic component and its AG invariant here coincide with the ones in \cite{AAG} (but we include
the component of type $\Bbb Z\vec{A_n}$).

\subsection{} Since objects in an exceptional $n$-cycle in $K^b(A\mbox{-}{\rm proj})$ are at the mouth (with a unique exception in the case $n=2$), the dimension of the Hom spaces between string complexes at the mouth
will play a central role in determining exceptional cycles in  $K^b(A\mbox{-}{\rm proj})$. This is given as follows.

\begin{Thm} \label{morphismbetweenforbiddenthreads} \ Let $A$ be  an indecomposable finite-dimensional gentle algebra,
$M$ and $M'$ string complexes at the mouth. Then
$$\dim_k\Hom_{K^b(A\mbox{-}{\rm proj})}(M, M')= \begin{cases} 2, & \ \ \ \ \mbox{if} \ M'\cong S(M)\cong M; \\
1, & \ \ \ \ \mbox{if} \ M' \ \mbox{is isomorphic to one of}  \ S(M) \ \mbox{and} \ M, \\
&  \ \ \ \ \mbox{but} \ M' \ \mbox{is not isomorphic to the other};
\\ 0, & \ \ \ \ \mbox{if} \ M'\ncong S(M)\ \mbox{and} \ M'\ncong M.\end{cases}$$
In particular, $$\dim_k \End(M)= \dim_k\Hom_{K^b(A\mbox{-}{\rm proj})}(M, S(M)) = \begin{cases} 2, & \ \ \ \ \mbox{if} \ M\cong S(M); \\ 1,
& \ \ \ \ \mbox{if} \ M\ncong S(M).\end{cases}$$
\end{Thm}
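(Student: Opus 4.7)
The plan is to combine Bobi\'nski's combinatorial description [B] of the Auslander-Reiten triangles of $K^b(A\mbox{-}{\rm proj})$, the graph-map basis of Hom spaces between string complexes (which extends the classical Crawley-Boevey description for string modules), and Serre duality.

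First, I would translate the condition ``$M(\sigma)$ is at the mouth'' into a condition on the string $\sigma$: by [B, Main Theorem] the middle term of the AR triangle ending at $M(\sigma)$ decomposes according to the two canonical hook/cohook operations at the endpoints of $\sigma$, and the mouth condition forces both to degenerate; hence $\sigma$ is a forbidden thread at both ends, with a rigidly prescribed letter pattern near each endpoint. Second, I would enumerate morphisms $M(\sigma)\to M(\sigma')$ via the graph-map basis: graph maps are indexed by common substrings $\rho$ of $\sigma$ and $\sigma'$ satisfying admissibility conditions at the four cut points, and for a pair of mouth strings the forbidden-thread constraints collapse the possible $\rho$ to the whole of $\sigma$ and of $\sigma'$, up to formal inverse; the ``singleton'' and ``double'' map contributions vanish for the same reason. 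This yields at most two nonzero graph maps when $\sigma = \sigma^{-1}$ as strings, exactly one when $\sigma' \in \{\sigma,\sigma^{-1}\}$ but $\sigma \ne \sigma^{-1}$, and zero when $\sigma' \notin \{\sigma,\sigma^{-1}\}$.

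Finally, I would connect the combinatorics to the Serre functor using the description of $S$ on string complexes coming from the Nakayama functor on projectives ([H1], [H2]): $M(\sigma^{-1})$, with an appropriate homological shift, coincides with $S(M(\sigma))$, so the two-graph-map case $\sigma \sim \sigma^{-1}$ is precisely the self-Serre case $M \cong S(M)$. Serre duality $\Hom(M,M')\cong\Hom(M',S(M))^*$ then gives $\dim\Hom(M,S(M))=\dim\End(M)$ and interchanges the two ``one-sided'' cases; the four cases of the statement follow, and the ``In particular'' conclusion $\dim\End(M)\in\{1,2\}$ is read off directly from the enumeration.

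The main obstacle is the combinatorial second step: a careful case analysis ruling out every proper admissible common substring $\rho$ for two mouth strings, and disposing of the singleton and double map contributions. Both rely on pushing the forbidden-thread condition at the endpoints of $\sigma$ and $\sigma'$ to a contradiction against admissibility of $\rho$. A secondary subtlety is matching formal reversal of strings with the Serre action on $K^b(A\mbox{-}{\rm proj})$, including careful bookkeeping of homological shifts, and verifying linear independence (not merely existence) of the two graph maps in the self-Serre case $M \cong S(M)$.
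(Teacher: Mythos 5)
Your toolbox is the right one (the Arnesen--Laking--Pauksztello basis of single, double and graph maps, plus Bobi\'nski's description of the mouth via forbidden threads, which is how the paper proceeds), but the proposal contains a genuine error at its pivot point: the identification $S(M(\sigma))\cong M(\sigma^{-1})[\ast]$. Reversing a homotopy string returns the \emph{same} string complex up to shift, not its Serre dual. Already for $A=kA_2$ $(1\to 2)$ the stalk complex $P(2)$ is at the mouth, $\sigma$ is the trivial thread at vertex $2$, $M(\sigma^{-1})\cong M(\sigma)=P(2)$, yet $S(P(2))=P(1)\ne P(2)$. The correct combinatorial avatar of $S$ on mouth complexes is not string reversal but the Avella-Alaminos--Geiss bijections $\Phi_1:\mathcal P_A\to\mathcal F_A$ and $\Phi_2:\mathcal F_A\to\mathcal P_A$ between permitted and forbidden threads: given the forbidden thread $v$, there is a unique permitted thread $h$ with $e(h)=e(v)$, $e'(h)=-e'(v)$, and $h$ in turn determines a unique forbidden thread $c$ with $s(c)=s(h)$, $s'(c)=-s'(h)$; the resulting $P_c$ is $S(P_v)$, and $h$ itself is (up to scalar) the unique non-isomorphism $P_v\to P_c$. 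This is exactly the content of the paper's key Lemma 5.2, and without it your enumeration collapses the case $M'\cong S(M)$ into the case $M'\cong M$ and cannot produce the stated answer.

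A second, related error is the claim that single-map contributions vanish. They do not: in the example above the generator of $\Hom(P(2),P(1))\cong k$ is precisely a single map given by the non-trivial permitted thread $\alpha$. In general, for $M\ncong S(M)$ the one-dimensional space $\Hom(M,S(M))$ is spanned either by a single map attached to a non-trivial permitted thread or by a graph map attached to a trivial one; only the double maps are null-homotopic. Finally, note that the paper does not obtain the lower bound purely combinatorially: it first invokes the Reiten--Van den Bergh construction (its Lemma 3.1, valid since $K^b(A\text{-}{\rm proj})$ is indecomposable and $\ne\mathcal D^b(k)$) to produce a non-zero non-isomorphism $f:M\to S(M)$, and then the combinatorial lemma shows that every non-zero non-isomorphism out of $M$ into a mouth complex is a scalar multiple of this $f$ with the same target; the four cases and the ``in particular'' statement follow from that dichotomy together with $\End(M)/\mathrm{rad}\cong k$. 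To repair your argument you would need to replace string reversal by the $\Phi_1,\Phi_2$ bookkeeping and redo the enumeration of single and graph maps accordingly.
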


\begin{Cor} \label{orthogonal} \ Let $A$ be an indecomposable finite-dimensional gentle algebra, $C$ and $C'$ different characteristic components of $K^b(A\mbox{-}{\rm proj})$, up to shift.
Then $\Hom^\bullet(X, Y) = 0$ for $X\in C$ and $Y\in C'$.
\end{Cor}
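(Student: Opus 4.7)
The plan is to reduce the claim to the mouth-to-mouth case of Theorem \ref{morphismbetweenforbiddenthreads} by exploiting the mesh structure of the characteristic components. First I would unpack $\Hom^{\bullet}(X,Y)=0$ as the vanishing $\Hom(X,Y[i])=0$ for every $i\in\mathbb{Z}$. Since $Y[i]\in C'[i]$, and the hypothesis that $C$ and $C'$ are different up to shift says $C[j]\neq C'[i]$ for all $i,j$, it suffices to prove the following uniform statement: for any two characteristic components $C,C'$ with $C[j]\neq C'$ for every $j$, and any indecomposables $X\in C$ and $Y\in C'$, we have $\Hom(X,Y)=0$.

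For the base case, assume both $X$ and $Y$ are at the mouth of their components. Theorem \ref{morphismbetweenforbiddenthreads} forces $\Hom(X,Y)=0$ unless $Y\cong X$ or $Y\cong S(X)$. The former contradicts $X\in C$ and $Y\in C'$ lying in distinct components, and the latter fails because $S(X)=\tau X[1]\in C[1]$ while $Y\in C'$, with $C[1]\neq C'$. By uniformity in shifts, we get $\Hom^{\bullet}(M,M')=0$ for every pair of mouth objects $M\in C$ and $M'\in C'$.

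To pass from mouth objects to arbitrary indecomposables, I would use the following key lemma: every indecomposable $Y$ in a characteristic component $C'$ lies in the thick subcategory of $K^b(A\mbox{-}{\rm proj})$ generated by the mouth objects of $C'$. Indexing indecomposables of $C'$ as $(\ell,i)$ with $\ell$ the quasi-length, this is proved by induction on $\ell$ using AR triangles inside $C'$. At $\ell=2$, the AR triangle $(1,i)\to(2,i)\to(1,i+1)\to(1,i)[1]$ ending at the mouth object $(1,i+1)$ places $(2,i)$ in the thick subcategory generated by the mouth. For $\ell\ge 3$, the AR triangle $(\ell-1,i)\to(\ell-2,i+1)\oplus(\ell,i)\to(\ell-1,i+1)\to(\ell-1,i)[1]$ ending at the adjacent level-$(\ell-1)$ object $(\ell-1,i+1)$ places $(\ell-2,i+1)\oplus(\ell,i)$ in the thick subcategory generated by $(\ell-1,i)$ and $(\ell-1,i+1)$; both level-$(\ell-1)$ objects are mouth-generated by induction, so $(\ell,i)$ is too, as a direct summand. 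Applying the cohomological functor $\Hom(X,-)$ along the cone tower assembling $Y$ from mouth objects, and then running the same reduction on $X\in C$, the desired vanishing $\Hom^{\bullet}(X,Y)=0$ follows from the mouth-to-mouth vanishing of the previous paragraph.

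The main obstacle is the choice of AR triangle in the key lemma. The AR triangle ending at $(\ell,i)$ itself has a summand at level $\ell+1$ in its middle term, so naively inducting on it breaks down; the workaround, implemented above, is to induct on the triangle ending at the neighbouring object $(\ell-1,i+1)$, whose middle term lists $(\ell,i)$ as a direct summand and is otherwise built from lower-level objects. The bounded and cyclic shapes $\mathbb{Z}A_n$ and $\mathbb{Z}A_{\infty}/\langle\tau^n\rangle$ require only minor boundary-and-periodicity adjustments to the same scheme.
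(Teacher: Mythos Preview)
Your proposal is correct and essentially matches the paper's own proof. Both reduce to the mouth-to-mouth case via Theorem~\ref{morphismbetweenforbiddenthreads} and then climb the height function using the Auslander--Reiten triangle whose middle term contains the height-$\ell$ object as a direct summand (with the remaining terms at heights $\ell-1$ and $\ell-2$); the paper phrases this as an explicit double induction on $h(X)$ and $h(Y)$, while you package the same manipulation in thick-subcategory language, but the content is identical.
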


\vskip5pt

The proof of Theorem \ref{morphismbetweenforbiddenthreads} and of Corollary \ref{orthogonal} will be given in Section 5.
The main tools used in the proof are Lemma \ref{rv}, the combinatorial description of morphisms between indecomposable objects in $\mathcal D^b(A)$, given by K. K. Arnesen, R. Laking and D. Pauksztello in \cite{ALP} (see Subsection 5.1), and the bijections between the set of permitted threads and the set of forbidden threads, given in [AAG] (see also [BB]. See Subsection 2.5).

\subsection{} The following main result classifies all the exceptional $n$-cycle in $K^b(A\mbox{-}{\rm proj})$ with $n\ge 2$. It turns out that such an exceptional cycle is exactly a truncation of the $\tau$-orbit of any indecomposable object at the mouth of a characteristic component, with few exceptions. For the equivalent relation $\approx$ on the set of exceptional cycles we refer to Subsection 2.2. 

\begin{Thm} \label{main} Let $A = kQ/I$ be a finite-dimensional gentle algebra with $A\ne k$, where $Q$ is a finite connected quiver such that the underlying graph of $Q$ is not of type $A_3$.

\vskip5pt

$(1)$ \ Let $C$ be a characteristic component of $K^b(A\mbox{-}{\rm proj})$ with  {\rm AG}-invariant $(n, m)$, and $X$ an indecomposable object at the mouth of $C$. Then
$$(X, \tau X, \cdots, \tau^{n-1}X)$$ is the unique exceptional cycle in $C$, up to the equivalent relation $\approx$.

\vskip5pt

$(2)$ \ Any exception $n$-cycle in $K^b(A\mbox{-}{\rm proj})$ with $n\ge 2$
is given in $(1)$, up to $\approx$.

\vskip5pt

$(3)$ \ Any object in an exceptional cycle in $K^b(A\mbox{-}{\rm proj})$ is indecomposable and at the mouth.

\vskip5pt

A string complex $E$ is an exceptional $1$-cycle if and only if $E$ is at the mouth of a characteristic component of  {\rm AG}-invariant $(1, d)$. In this case $E$ is a $d$-Calabi-Yau object.

\vskip5pt

If a band complex $E$ is an exceptional $1$-cycle, then $E$ is at the mouth of a homogenous tube. \end{Thm}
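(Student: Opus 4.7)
The plan is to verify first that the orbit $(X,\tau X,\ldots,\tau^{n-1}X)$ in part (1) satisfies the exceptional $n$-cycle axioms of [BPP], then show that every exceptional cycle of length $\ge 2$ must arise this way, and finally handle the length-$1$ cases. Setting $E_i := \tau^i X$ and using $S = \tau[1]$ with the AG-invariant identity $\tau^n X\cong X[m-n]$, we obtain $S(E_i)\cong E_{i+1}[1]$ for $0\le i\le n-2$ and $S(E_{n-1})\cong E_0[m-n+1]$. Because $n\ge 2$, the classes $E_0,\ldots,E_{n-1}$ lie in pairwise distinct shift-orbits in $C$, so Theorem \ref{morphismbetweenforbiddenthreads} immediately yields $\End(E_i)\cong k$, $\Hom^\bullet(E_i,E_{i+1})\cong k[-1]$ for $0\le i\le n-2$, $\Hom^\bullet(E_{n-1},E_0)\cong k[n-m-1]$, and $\Hom^\bullet(E_i,E_j)=0$ otherwise. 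These are exactly the exceptional $n$-cycle axioms with degree sequence $(1,\ldots,1,m-n+1)$ summing to $m$, matching the total Serre shift $S^n(X)\cong X[m]$.

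For uniqueness in (1) and for (2), the Serre relation $S(F_i)\cong F_{i+1}[d_i]$ in any exceptional $n$-cycle $(F_0,\ldots,F_{n-1})$ rewrites as $F_{i+1}\cong\tau F_i[1-d_i]$, forcing the $F_i$ into a single component and a single $\tau$-shift-orbit. For $n\ge 3$, Theorem \ref{indsphericalmouth}(2) places each $F_i$ at the mouth. To exclude a tube of band complexes as the ambient component, I would argue that in a rank-$r$ band tube the identity $\tau^r F_0\cong F_0$ combined with $F_i\cong \tau^i F_0[c_i]$ forces $r\mid n$ and $\sum d_i=n$, and then the Hom structure between mouth objects of a band tube (in particular the non-trivial $\Hom(F_i,F_{i-1}[1])$ coming from the AR-triangle when $r\ge 2$, and self-extensions when $r=1$) violates either the single-step axiom $\Hom^\bullet(F_i,F_j)=0$ for $j\ne i,i+1$ or the rigidity $\Hom^\bullet(F_i,F_i)=k$ unless $n=1$. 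Therefore the ambient component is a characteristic component $C$, and since its mouth is a single $\tau$-shift-orbit, the cycle agrees with $(X,\tau X,\ldots,\tau^{n-1}X)$ up to $\approx$, giving AG-invariant $(n,\sum d_i)$.

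The main obstacle is the case $n=2$, which is precisely why the hypothesis excludes the underlying graph $A_3$ (the remark after Theorem \ref{indsphericalmouth} exhibits a $2$-cycle in $\mathcal D^b(kA_3)$ whose objects are not at the mouth). Given an exceptional $2$-cycle $(F_0,F_1)$, the aim is to show $F_0,F_1$ are at the mouth. Suppose $F_0$ is not, so the AR-triangle ending at $F_0$ has decomposable middle term $M_1\oplus M_2$. Applying $\Hom(-,F_1[d_0])$ to this triangle and using the axioms $\Hom^\bullet(F_0,F_0)=k$ and $\Hom^\bullet(F_0,F_1)=k[-d_0]$ together with Bobi\'nski's explicit description of AR-triangles in $K^b(A\mbox{-}{\rm proj})$ and Theorem \ref{morphismbetweenforbiddenthreads}, the combinatorics of the strings parameterizing $F_0,F_1,M_1,M_2$ is forced to match exactly the projective-injective pair configuration at a central vertex of an $A_3$-subquiver, which the hypothesis forbids. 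The same argument applied to $F_1$ places both at the mouth, after which the previous paragraph supplies the identification up to $\approx$. The first assertion of (3) now follows from these together with Theorem \ref{indsphericalmouth}(1) for the $n=1$ case.

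For the string-complex $1$-cycle biconditional: if a string complex $E$ is an exceptional $1$-cycle with $S(E)\cong E[d]$, then Theorem \ref{indsphericalmouth}(1) places $E$ at the mouth of a characteristic component $C$, and $\tau E\cong S(E)[-1]\cong E[d-1]$ gives AG-invariant $(1,d)$; conversely, at the mouth of a characteristic component with AG-invariant $(1,d)$ one has $S(E)\cong E[d]$, and Theorem \ref{morphismbetweenforbiddenthreads} computes $\Hom^\bullet(E,E)\cong k\oplus k[-d]$ (treating the cases $d=0$ and $d\ne 0$ separately; in the former $\End(E)$ is a two-dimensional local algebra, hence $\cong k[x]/\langle x^2\rangle$), confirming that $E$ is a $d$-Calabi-Yau exceptional $1$-cycle. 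For a band complex $E$ which is an exceptional $1$-cycle with $S(E)\cong E[d]$: Theorem \ref{indsphericalmouth}(1) places $E$ at the mouth of a band tube of some rank $r\ge 1$; combining $\tau E\cong E[d-1]$ with $\tau^r E\cong E$ yields $E[r(d-1)]\cong E$, so $r(d-1)=0$ by homotopy-likeness, forcing $d=1$, hence $\tau E\cong E$, and therefore $r=1$, i.e., the tube is homogeneous.
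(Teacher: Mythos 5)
Most of your outline tracks the paper's actual proof: the verification that $(X,\tau X,\dots,\tau^{n-1}X)$ satisfies (E1)--(E3) via Theorem \ref{morphismbetweenforbiddenthreads} and the minimality in the AG-invariant, the exclusion of band complexes from cycles of length $\ge 2$ via $\tau E=E$ and the resulting self-extension $\Hom(E,E[1])\cong\End(E)^*\neq 0$, the use of Theorem \ref{indsphericalmouth} for $n=1$ and $n\ge 3$, and the treatment of the $1$-cycle cases are all essentially what the paper does. (Two small remarks: band tubes here are always homogeneous by Theorem \ref{2terms}(2), so your ``rank-$r$'' discussion collapses to $r=1$; and for components of type $\Bbb Z\vec{A_n}$ the mouth is two $\tau$-orbits, so identifying an arbitrary mouth object with a $\tau$-shift of $X$ needs the extra step recorded in Fact \ref{an}.)

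The genuine gap is the case $n=2$, which you correctly identify as the crux but do not actually prove. Your sketch --- apply $\Hom(-,F_1[d_0])$ to the AR-triangle and argue that ``the combinatorics of the strings is forced to match the projective--injective configuration at a central vertex of an $A_3$-subquiver'' --- is not an argument, and it points in the wrong direction: the hypothesis excludes quivers whose underlying \emph{graph} is $A_3$, not algebras containing an $A_3$-subquiver, and a gentle algebra can easily contain such a local configuration without violating the hypothesis. The paper's Lemma \ref{2cyclefornonA3} works quite differently. Assuming $F_0$ not at the mouth, it produces a nonzero composite $g_1f_1\colon F_0\to\tau^{-1}F_0$ through one summand of the middle term (Lemma \ref{gfnot0}, which needs $\End(F_0)\cong k$), identifies $\mathrm{Cone}(g_1f_1)\cong\mathrm{Cone}(f_1)\oplus\mathrm{Cone}(g_1)$ with both summands at the mouth (Lemmas \ref{con1} and \ref{cone2}), and compares with the connecting map $h\colon F_0\to S(F_0)$ of the AR-triangle ending at $F_0$, whose cone is $Y_1[1]\oplus Y_2[1]$. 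The $A_3$ hypothesis enters globally: if both $Y_1,Y_2$ were at the mouth the component would be $\Bbb Z\vec{A_3}$, forcing $K^b(A\mbox{-}{\rm proj})\cong\mathcal D^b(k\vec{A_3})$ by Lemma \ref{for2cycles} and hence $Q$ of underlying type $A_3$ by [AH]. So at most one $Y_i$ is at the mouth, the two cones are non-isomorphic, hence $g_1f_1$ and $h$ are linearly independent in $\Hom^\bullet(F_0,F_1)\cong\Hom^\bullet(F_0,F_0)$, contradicting (E1). None of this machinery (in particular the mapping-cone lemmas and the cone-comparison trick) appears in your proposal, so the $n=2$ step --- and with it the uniqueness in (1) and all of (2) for $n=2$ --- is unproven.
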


\vskip5pt

Note that N. Broomhead, D. Pauksztello, and D. Ploog \cite[5.1]{BPP} have pointed out 
the assertion $(1)$ for the gentle algebras $\Lambda (r, n, m)$ with $n>r$: these are exactly 
derived-discrete algebras of finite global dimension which is not of Dynkin type, up to derived equivalence.

\vskip5pt

{\bf Remaek} $(1)$ \ If $A = k$, then $K^b(A\mbox{-}{\rm proj}) = \mathcal D^b(k)$ has no mouths,
and $(k, k)$ is the unique exceptional cycle
in $K^b(A\mbox{-}{\rm proj})$, up to $\approx$. So, Theorem \ref{main}$(2)$ does not hold
for $A = k$.

$(2)$ \ If $A = kQ/I$ is a gentle algebra such that the underlying graph of $Q$ is of type $A_3$,
then $K^b(A\mbox{-}{\rm proj}) \cong \mathcal D^b(k(1\rightarrow 2\rightarrow 3))$, and $(P(3), I(3) = P(1), I(1), S(2))$ and $(P(2), I(2))$ are all the exceptional cycles
in $\mathcal D^b(k(1\rightarrow 2\rightarrow 3))$, up to $\approx$. So, Theorem \ref{main}$(1)$ and $(2)$ do not hold
in this case.

$(3)$ \ A band complex at the mouth of a homogeneous tube is not necessarily an exceptional $1$-cycle.  See Example \ref{bandatmouth}.

\vskip5pt

The proof of  Theorem \ref{main} will be given in Section 6. The main tools used in the proof are
Theorem \ref{morphismbetweenforbiddenthreads}, Lemma \ref{2cyclefornonA3}, and Theorem \ref{indsphericalmouth}.

\section{\bf Preliminaries}

\subsection{Exceptional cycles}

\begin{Def}\label{defnexcepcycle} \ {\rm(\cite{BPP})} \ An exceptional $n$-cycle in $\mathcal{T}$ with $n\ge 2$ is a sequence $(E_1, \cdots, E_n)$ of objects satisfying the following conditions$:$

\vskip5pt

${\rm (E1)}$ \ \ $\Hom^{\bullet}(E_i,E_i) \cong k$ for each $i;$

\vskip5pt

${\rm (E2)}$ \ \ there are integers $m_i$ such that $S(E_i)\cong E_{i+1}[m_i]$ for each  $i$, where $E_{n+1}:=E_1;$

\vskip5pt

${\rm (E3)}$ \ \ $\Hom^{\bullet}(E_i, E_j)=0$, unless $j=i$ or $j=i+1$. $($This condition vanishes if $n =2$.$)$

\end{Def}

The sequence $(m_1, \cdots, m_n)$ of integers in the definition is unique, and we will call $(E_1, \cdots, E_n)$ an exceptional cycle with respect to $(m_1, \cdots, m_n)$. It is clear that (see e.g. [GZ, Lemma 2.2]) a sequence $(E_1, \cdots, E_n)$ of objects in $\mathcal{T}$ with $n\ge 2$ is an exceptional cycle if and only if
it satisfies the condition ${\rm (E1')}$, ${\rm (E2)}$, and ${\rm (E3')}$, where

\vskip5pt

${\rm (E1')}$ \ \ $\Hom^{\bullet}(E_1, E_1) \cong k$;

\vskip5pt

${\rm (E3')}$ \ \ If $n\ge 3$, then $\Hom^{\bullet}(E_1, E_j)=0$ for $3\le j\le n$.

\vskip5pt

Each object in an exceptional $n$-cycle with $n\ge 2$ is indecomposable. If $(E_1, \cdots, E_n)$ is an exceptional cycle, then
$$(E_2, \ \cdots, E_n, \ E_1), \ \ \ \ (S(E_1), \cdots, S(E_n)), \ \ \ \  (\tau E_1, \cdots, \tau E_n)$$ are also exceptional cycles. If $(E_1, \cdots, E_n)$ is an exceptional cycle, with respect to $(m_1, \cdots, m_n)$, then for arbitrary integers $t_1, \cdots, t_n$, $$(E_1[t_1], \cdots, E_n[t_n])$$ is an exceptional cycle,  with respect to  $(m_1+t_1-t_2, \cdots, m_n+t_n-t_{n+1})$, where $t_{n+1} = t_1$.

\subsection{Equivalent relation $\approx$} \ Consider the set $\mathcal E_n$ of all the exceptional $n$-cycles in $\mathcal T$ with $n\ge 1$.
For $(E_1, \cdots, E_n)\in \mathcal E_n$ and $(E'_1, \cdots, E_n')\in \mathcal E_n$, define
$(E_1, \cdots, E_n)\approx (E'_1, \cdots, E_n')$ if and only if they are up to shift independently at each position and up to rotation, i.e., there are integers $t_1, \cdots, t_n, s$ with $0\le s \le n-1$, such that
$$(E'_1, \cdots, E_n') =  (E_{\sigma^s(1)}[t_1], \cdots, E_{\sigma^s(n)}[t_n])$$
where $\sigma$ is the cyclic permutation $(1 \ 2 \ \cdots \ n)$. Then $\approx$ is an equivalent relation on $\mathcal E_n$.

\vskip5pt

For $(E_1, \cdots, E_n)\in \mathcal E_n$, by ${\rm (E2)}$ one has
$(\tau E_1, \cdots, \tau E_n)\approx (S(E_1), \cdots, S(E_n))\approx (E_1, \cdots, E_n).$

\vskip5pt

The following fact is convenient for later use. See [GZ, Lemma 2.1].

\begin{Lem}\label{shift} \ $(1)$ \ If
$(E_1, \cdots, E_n)$ and $(E'_1, \cdots, E'_m)$ are exceptional cycles in $\mathcal{T}$ with $E'_i \cong E_1[t]$ for some integers $i$ and $t$, then $n = m$ and
$(E_1, \cdots, E_n)\approx (E'_1, \cdots, E'_m).$

Thus each indecomposable object occurs in at most one exceptional $n$-cycle with $n\ge 2$, up to the equivalent relation $\approx$.

In particular, an exceptional cycle can not be enlarged, i.e., if $(E_1, \cdots, E_n)$ is an exceptional $n$-cycle with $n\ge 1$, then there are no exceptional cycles of the form
$(E_1, \cdots, E_n, F_1, \cdots, F_t)$ with $t\ge 1$. Thus, an exceptional cycle can not be shorten such that it is again an exceptional cycle.

\vskip5pt

$(2)$ \ Assume that $(E_1, \cdots, E_n)$ is an exceptional cycle with $n\ge 3$. Then $E_i\ncong E_1[t]$ for all $2\le i\le n$ and for all $t\in\Bbb Z$.
\end{Lem}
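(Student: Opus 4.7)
The plan for Part (1) is first to reduce to the normalized situation $E'_1 \cong E_1$: rotate $(E'_1,\ldots,E'_m)$ to bring $E'_i$ into the first slot, then shift every entry by $-t$. Both operations preserve the exceptional-cycle property and the relation $\approx$, as recorded in the paragraph preceding Subsection 2.2, so the original claim follows from the normalized one. With $E'_1 \cong E_1$ in hand, I would apply (E2) inductively: from $S(E_1) \cong E_2[m_1]$ and $S(E'_1) \cong E'_2[m'_1]$ I obtain $E'_2 \cong E_2[m_1 - m'_1]$, and iterating yields $E'_j \cong E_j[s_j]$ for integers $s_j$ and all $1 \le j \le \min(n, m)$.

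The main obstacle is to conclude $n = m$. When exactly one of $n, m$ equals $1$, the contradiction comes from comparing $\Hom^\bullet(E_1, E_1)$: condition (E1') forces it to be $\cong k$ in a cycle of length $\ge 2$, while the definition of an exceptional $1$-cycle makes it $\cong k \oplus k[-d]$, and these are inequivalent as graded $k$-spaces. For $n, m \ge 2$, I would argue by contradiction assuming $n < m$. Taking one more step in the $E$-cycle wraps around to give $S(E_n) \cong E_1[m_n]$, whereas the same step in the $E'$-cycle gives $S(E'_n) \cong E'_{n+1}[m'_n]$. Combining these with $E'_n \cong E_n[s_n]$ and $E'_1 \cong E_1$ yields $E'_{n+1} \cong E'_1[c]$ for some integer $c$. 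Since $3 \le n+1 \le m$, axiom (E3) forces $\Hom^\bullet(E'_1, E'_{n+1}) = 0$, but this space equals $\Hom^\bullet(E'_1, E'_1)[c]$ and so contains a shifted identity, a contradiction. This is the heart of the argument, as it is precisely the clash between wraparound in the shorter cycle and the vanishing imposed by (E3) in the longer cycle that cannot be circumvented.

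For Part (2), I assume $E_i \cong E_1[t]$ with $2 \le i \le n$ and $n \ge 3$. If $i \ge 3$, then (E3) gives $\Hom^\bullet(E_1, E_i) = 0$, whereas $E_i \cong E_1[t]$ forces a shifted identity in that space, a contradiction. If $i = 2$, I apply (E3) to the pair $(E_2, E_1)$: since $n \ge 3$, the indices $(2,1)$ satisfy neither $1 \equiv 2$ nor $1 \equiv 3 \pmod{n}$, so $\Hom^\bullet(E_2, E_1) = 0$, again contradicting $E_2 \cong E_1[t]$. The three ``in particular'' consequences are then direct corollaries of Part (1): any indecomposable occurring in a cycle of length $\ge 2$ can be rotated into the first slot, so uniqueness up to $\approx$ is immediate; an enlargement $(E_1,\ldots,E_n, F_1,\ldots,F_t)$ with $t \ge 1$ would, by Part (1) applied to the two cycles both containing $E_1$, force $n+t = n$ and hence $t = 0$; and the shortening claim is handled by the identical argument.
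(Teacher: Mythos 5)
Your proof is correct and complete. The paper itself gives no argument for this lemma---it only cites [GZ, Lemma 2.1]---but your route (normalize by rotation and uniform shift so that $E'_1\cong E_1$, propagate $E'_j\cong E_j[s_j]$ via (E2), rule out mismatched lengths by playing the wrap-around $S(E_n)\cong E_1[m_n]$ of the shorter cycle against the vanishing $\Hom^{\bullet}(E'_1,E'_{n+1})=0$ forced by (E3) in the longer one, and settle the length-one case by comparing $\Hom^{\bullet}(E_1,E_1)\cong k$ with $k\oplus k[-d]$) is exactly the natural argument and correctly covers all cases, including the use of $\Hom^{\bullet}(E_2,E_1)=0$ rather than $\Hom^{\bullet}(E_1,E_2)$ in part (2).
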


\subsection{Gentle algebras} Let $Q = (Q_0, Q_1, s, e)$ be a finite connected quiver. Write the conjunction of paths from right to left. A bound quiver $(Q, I)$ algebra $A=kQ/I$ (see e.g. [ASS])  is {\it a gentle algebra} (see \cite{AS}),  if the following conditions are satisfied:

\vskip5pt

{(\rm G1)} \ for each vertex $x$, there are at most two arrows starting from $x$, and at most two arrows ending at $x$;

{(\rm G2)} \ for each arrow $\alpha$, there is at most one arrow $\beta$ with $\beta\alpha\notin I$, and at most one arrow $\gamma$ with $\alpha\gamma\notin I$;

{(\rm G3)} \ for each arrow $\alpha$, there is at most one arrow $\beta$ with $\beta\alpha\in I$, and at most one arrow $\gamma$ with $\alpha\gamma\in I$;

{(\rm G4)} \ the ideal $I$ is generated by paths of length 2.

\vskip5pt

A gentle algebra is possibly infinite dimensional. We only consider finite-dimensional gentle algebra.
We need the following equivalent definition of a gentle algebra (see \cite{BR}, \cite{B}).  A bound quiver algebra $A=kQ/I$ is gentle, where $I$ is generated by paths of length $2$, if there are maps $s', e': Q_1\rightarrow \{1, -1\}$ such that

\vskip5pt

(i) \ if $\alpha\in Q_1$ and $\beta\in Q_1$ start at the same vertex with $\alpha\ne \beta$, then $s'(\alpha)=-s'(\beta)$;

(ii) \ if $\alpha\in Q_1$ and $\beta\in Q_1$ end at the same vertex with $\alpha\ne \beta$, then $e'(\alpha)=-e'(\beta)$;

(iii) \  if $\alpha\in Q_1$ ends at the vertex where $\beta\in Q_1$ starts and $\beta\alpha\notin I$, then $s'(\beta) = - e'(\alpha)$;

(iv) \  if $\alpha\in Q_1$ ends at the vertex where $\beta\in Q_1$ starts and $\beta\alpha\in I$, then $s'(\beta) =e'(\alpha)$.

\vskip5pt

In this case, for a path $p=\alpha_n\cdots\alpha_1$ with each $\alpha_i\in Q_1$, we define $s'(p):=s'(\alpha_1)$ and $e'(p):=e'(\alpha_n)$.

\subsection{Permitted (forbidden, respectively) threads of a gentle algebra} Let $A = kQ/I$ be a gentle algebra. A path $p$ is {\it a permitted path} if $p\notin Q_0$ and $p\notin I$.
Following [AAG], a {\it non-trivial permitted thread} $p$ is a permitted path, such that $p\alpha  = 0 = \alpha p$ for all $\alpha\in Q_1$.

\vskip5pt

A {\it trivial permitted thread} is a vertex $x$ such that

\vskip5pt

(i) \ there is at most one arrow starting from $x$, and at most one arrow ending at $x$;

(ii) \ If $\beta$ is an arrow ending at $x$  and
$\gamma$ is an arrow starting from $x$, then $\gamma\beta\notin I$.

\vskip5pt

Denote by $\mathcal{P}_A$ the set of permitted threads (whatever they are non-trivial or trivial).

\vskip5pt

A path $p=\alpha_n\cdots\alpha_1$ with each $\alpha_i\in Q_1$ is {\it a forbidden path}, if $n\ge 1$ and $\alpha_1, \cdots, \alpha_n$ are pairwise different, such that $\alpha_{i+1}\alpha_i\in I$ for $1\leq i\leq n-1$.  {\it A non-trivial forbidden thread} $p$ is a forbidden path, if for all $\alpha\in Q_1$, neither $\alpha p$ nor $p\alpha$ is a forbidden path.

\vskip5pt

{\it A trivial forbidden thread} is a vertex $x$, such that

\vskip5pt

(i) \ there is at most one arrow starting from $x$, and at most one arrow ending at $x$;

(ii) \ If $\beta$ is an arrow ending at $x$  and
$\gamma$ is an arrow starting from $x$, then $\gamma\beta\in I$.

\vskip5pt

Denote by $\mathcal{F}_A$ the set of forbidden threads (whatever they are non-trivial or trivial).

\vskip5pt

Extend the maps $s'$ and $e'$ to trivial permitted (forbidden, respectively) threads as follows. We write a vertex $x$ as ${\bf1}_x$. For a trivial permitted thread $x\in Q_0$, since $Q$ is connected, there is either $\gamma\in Q_1$ with $s(\gamma)=x$, or $\beta\in Q_1$ with $e(\beta)=x$.
Define
$$s'({\bf1}_x)=-e'({\bf1}_x)=-s'(\gamma), \ \ \mbox{or} \ \ s'({\bf1}_x)=-e'({\bf1}_x)=e'(\beta).$$ For a trivial forbidden thread $y\in Q_0$, there is either $\gamma\in Q_1$ with $s(\gamma)=y$, or $\beta\in Q_1$ with $e(\beta)=y$. Define $$s'({\bf1}_y)=e'({\bf1}_y)=-s'(\gamma), \ \ \mbox{or} \ \ s'({\bf1}_y)=e'({\bf1}_y)=-e'(\beta).$$

\subsection{Bijection between $\mathcal{P}_A$ and $\mathcal{F}_A$} Avella-Alaminos and Geiss [AAG] (see also [BB]) observed that there are bijections between
$\mathcal{P}_A$ and $\mathcal{F}_A$. For each permitted thread $v$, there is a unique forbidden thread $w$ such that
$$e(w)=e(v), \ \ e'(w)=-e'(v).$$ This defines a bijection $\Phi_1: \mathcal{P}_A\rightarrow \mathcal{F}_A$, $v\mapsto w$.
Also, for each forbidden thread $w$, there is a unique permitted thread $v$ such that
$$s(w)=s(v), \ \ s'(w)=-s'(v).$$ This defines a bijection $\Phi_2: \mathcal{F}_A\rightarrow \mathcal{P}_A$, $w\mapsto v$.
Note that $\Phi_2\Phi_1 : \mathcal{P}_A \rightarrow \mathcal{P}_A $ is not necessarily the identity map.

\subsection{Homotopy strings and homotopy bands}
For each $\alpha\in Q_1,$ define its formal inverse $\alpha^{-1}$ such that $s(\alpha^{-1}) = e(\alpha), \ e(\alpha^{-1}) = s(\alpha)$ and $(\alpha^{-1})^{-1} = \alpha$. For path $p = \alpha_n\cdots\alpha_1$ with each $\alpha_i\in Q_1$, define $p^{-1}:=\alpha_1^{-1}\cdots\alpha_n^{-1}$. Define
$$s(p^{-1}):=e(p), \ e(p^{-1}):=s(p), \ s'(p^{-1}):=e'(p), \ e'(p^{-1}):=s'(p).$$

A {\it homotopy letter} $w$ is either a permitted path $p$ (in this case, $w$ is said to be {\it direct}), or the formal inverse $p^{-1}$ of $p$ (in this case, $w$ is {\it inverse}). The composition $ww'$ of homotopy letters is defined if $e(w')=s(w)$ and if the following conditions are satisfied:

\vskip5pt

(i) \  if both  $w$ and $w'$ are direct, or, if both  $w$ and $w'$ are inverse, then $e'(w')=s'(w);$

(ii) \ if one of $w$ and $w'$ is direct and the other is inverse, then $e'(w')=-s'(w)$.

\vskip5pt

A non-trivial {\it homotopy string} is a sequence of consecutive composable homotopy letters
$w=w_n\cdots w_1.$ By the definition, a non-trivial permitted (forbidden, respectively) thread is a non-trivial homotopy string.

\vskip5pt
For $x\in Q_0$ and $\varepsilon\in\{1,-1\}$, define two {\it trivial homotopy strings} ${\bf1}_{x,1}$ and ${\bf1}_{x,-1}$, with $s({\bf1}_{x,\varepsilon}):=x=:e({\bf1}_{x,\varepsilon})$ and $s'({\bf1}_{x,\varepsilon})=\varepsilon, \ e'({\bf1}_{x,\varepsilon})=-\varepsilon$ . By the definition, a trivial permitted (forbidden, respectively) thread can be regarded a
trivial homotopy string.

\vskip5pt

Put $({\bf1}_{x,\varepsilon})^{-1}={\bf1}_{x,-\varepsilon}$. For the composition of  (non-trivial or trivial) homotopy strings we refer to \cite [Section 3]{B}.
Extend the maps $s, e, s', e'$ to homotopy strings as
$$s(w)=s(w_1),  \ \ e(w)=e(w_n), \ \ s'(w)=s'(w_1), \ \ e'(w)=e'(w_n).$$
{\it The degree} deg$(w)$ of a  homotopy string $w$ is the number of direct homotopy letters in $w$ minus the number of inverse homotopy letter in $w$. For examples,
in the path algebra of the quiver $\xymatrix@R=0.3cm@C=0.5cm{&\cdot\ar[dr]^-{\beta} \\\cdot \ar[ur]^\alpha \ar[rr]^-\gamma && \cdot}$ the degree of the homotopy string $\gamma^{-1}\beta\alpha$ is $0$, rather than $1$;
and in the algebra given by the same quiver with relation $\beta\alpha$, the degree of the homotopy string $\gamma^{-1}\beta\alpha$ is $1$, rather than $0$.

\vskip5pt

A non-trivial homotopy string $w=w_n\cdots w_1$ is a {\it homotopy band} if the conditions are satisfied:
\vskip5pt

(i) \  deg$(w)=0$ and $s(w)=e(w)$;

(ii) \ one of $w_n$ and $w_1$ is direct and the other is inverse; and

(iii) \ $w$ is not a proper power of a homotopy string.

\subsection{String complexes and band complexes} Bobi\'nski \cite{B} has introduced a string complex and a band complex in $K^b(A\mbox{-}{\rm proj})$ for a gentle algebra $A$ (see also [BM] for
$\mathcal D^b(A)$; also [AAG] and [BR]).
For a homotopy string $w$ and an integer $m$, there is an associated {\it string complex} $P_{m, w}$.
Also, for a homotopy band $w$, an integer $m$, and an indecomposable automorphism $\mu$ of a finite-dimensional vector space, there is an associated {\it band complex}
$P_{m, w, \mu}$. For details we refer to \cite[Section 3]{B} and \cite[4.1]{BM}. Note that for different integers $m$ and $m'$, $P_{m', w}$ is a shift of $P_{m, w}$, and
$P_{m', w, \mu}$ is a shift of $P_{m, w, \mu}$.

\begin{Thm}\label{sorb} {\rm (\cite{B}, \cite{BM})} Let $A$ be a gentle algebra. Then up to isomorphism any indecomposable object in $K^b(A\mbox{-}{\rm proj})$
is either a string complex $P_{m,w}$,  or a band complex $P_{m,w,\mu}$.
\end{Thm}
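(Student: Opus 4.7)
The plan is to treat Theorem \ref{sorb} as a classification result and outline the two routes in the literature that prove it, without carrying out the combinatorial verifications in detail. The first and conceptually cleanest route is via the repetitive algebra $\hat{A}$: as recalled in the introduction, $A$ is gentle if and only if $\hat{A}$ is special biserial (by the results cited to [AS], [PS]). Happel's theorem gives a fully faithful triangle embedding $K^b(A\mbox{-}{\rm proj})\hookrightarrow \hat{A}\mbox{-}\underline{\rm mod}$, and since $A$ is Gorenstein, by the result cited above of \cite{HKR} this embedding respects Auslander--Reiten components and in particular sends indecomposable objects to indecomposable modules. Thus every indecomposable of $K^b(A\mbox{-}{\rm proj})$ can be identified with an indecomposable non-projective-injective $\hat{A}$-module.

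Next I would apply the Butler--Ringel classification \cite{BR}: every indecomposable module over a special biserial algebra is either a string module $M(w)$ for some string $w$, or a band module $M(w,\mu)$ for a band $w$ and an indecomposable automorphism $\mu$ of a finite-dimensional vector space (the remaining indecomposables being the finitely many projective-injectives, which do not concern us after stabilisation). It remains to translate this classification back through the Happel embedding: a string (resp.\ band) for $\hat{A}$ whose module lies in the image corresponds precisely to a homotopy string (resp.\ homotopy band) for $A$ in the sense of Subsection~2.6, and the integer $m$ records the vertical shift in the $\mathbb{Z}$-covering that underlies $\hat{A}$. This realises the indecomposable as either $P_{m,w}$ or $P_{m,w,\mu}$ in $K^b(A\mbox{-}{\rm proj})$.

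An alternative, self-contained route is that of Bekkert--Merklen \cite{BM}, which works directly in $K^b(A\mbox{-}{\rm proj})$. Starting from an indecomposable bounded complex of finitely generated projectives, one exploits gentleness conditions (G1)--(G4) to put the differential into a canonical block form by iteratively splitting off trivial pieces (Gaussian-style elimination, available because any path of length $\ge 2$ either is zero or continues uniquely). Each remaining indecomposable block is then read off as either a string complex $P_{m,w}$ or, if the combinatorial support closes up into a cycle of degree zero with no proper power, a band complex $P_{m,w,\mu}$ with $\mu$ encoding the single Jordan block at the closing identification.

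The main obstacle in either route is the combinatorial bookkeeping: one has to check that the conditions in Subsection~2.6 defining a homotopy letter, a homotopy string and a homotopy band (the direct/inverse alternation, the compatibility of $s',e'$ under composition, $\deg(w)=0$, and non-periodicity) correspond exactly to the conditions defining strings and bands for $\hat{A}$ restricted to the image of the Happel embedding, and that no two of the resulting complexes $P_{m,w}$ or $P_{m,w,\mu}$ are isomorphic except through the obvious symmetries (shift of $m$, inversion $w\mapsto w^{-1}$, rotation of bands). Since both \cite{B} and \cite{BM} carry out this bookkeeping in full, I would simply invoke their results at this point rather than reproduce the verification.
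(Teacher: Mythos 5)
The paper does not prove this statement at all: it is imported verbatim from Bobi\'nski \cite{B} and Bekkert--Merklen \cite{BM}, exactly as the citation in the theorem header indicates. Your proposal, which ultimately defers to those same references after sketching the two standard proof strategies (the repetitive-algebra/Butler--Ringel route and the direct reduction in $K^b(A\mbox{-}{\rm proj})$), is therefore consistent with what the paper does and contains no error that matters here.
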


A string complex given by a forbidden thread is of the following important property.

\begin{Lem}\label{mouth} {\rm (\cite{B}, Corollary 6.3)} \ Let $A$ be a gentle algebra, $w$ a homotopy string, and $m$ an integer. Then the string complex $P_{m,w}$ is at the mouth if and only if $w$ is a forbidden thread.
\end{Lem}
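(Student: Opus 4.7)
My plan is to read off the statement from Bobi\'nski's combinatorial description of the Auslander--Reiten triangles in $K^b(A\mbox{-}{\rm proj})$ (the Main Theorem of \cite{B}). That description presents the middle term of the AR triangle ending at a string complex $P_{m,w}$ as (at most) a direct sum of two indecomposable string complexes $P_{m_L,w_L}\oplus P_{m_R,w_R}$, one associated to each endpoint $s(w)$ and $e(w)$: each homotopy string $w_L$, $w_R$ is produced from $w$ by a local hook/cohook modification at the corresponding endpoint, and the corresponding summand is absent whenever that local modification degenerates. Thus the problem reduces to the purely combinatorial question of deciding when exactly one of the two summands is present.

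The next step is to translate the ``present/degenerate'' dichotomy at each endpoint into a condition on $w$ alone. Using the sign data $s'$, $e'$ of Subsection 2.3 and the gentle relations, I would unpack Bobi\'nski's local recipe at $s(w)$, splitting into cases according to whether the leftmost letter of $w$ is direct, inverse, or whether $w={\bf 1}_{x,\varepsilon}$ is trivial, and similarly at $e(w)$. The outcome to aim for is the following dictionary: the contribution at $s(w)$ (resp.\ $e(w)$) degenerates in precisely the way that forces the middle term to be indecomposable iff no $\alpha\in Q_1$ allows one to prepend (resp.\ append) a letter to $w$ yielding a longer forbidden path, and for a trivial string the degeneracy matches conditions (i), (ii) for a trivial forbidden thread in Subsection 2.4. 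Comparing this with the definition of a (trivial or non-trivial) forbidden thread shows that the middle term is indecomposable iff both endpoint conditions hold simultaneously iff $w$ is a forbidden thread.

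Combining the two steps yields the equivalence, and the symmetric argument applied to the AR triangle \emph{starting} at $P_{m,w}$ produces the same characterisation and is therefore redundant.

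The main technical obstacle I anticipate is exactly this case-by-case verification: both Bobi\'nski's hook/cohook recipe and the forbidden-thread conditions are phrased in terms of $s'$ and $e'$, but they invoke different portions of the sign data, so the dictionary must be checked separately for direct versus inverse extremal letters, for valency one versus valency two at the endpoint vertex, and for trivial homotopy strings; the latter in particular require using the extension of $s', e'$ to ${\bf 1}_x$ from the end of Subsection 2.4. Once this bookkeeping is carried out the equivalence drops out directly from the Main Theorem of \cite{B}.
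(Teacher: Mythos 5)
The paper itself offers no argument here: the statement is imported verbatim from \cite{B} as Corollary 6.3, so what you are really being asked to reconstruct is Bobi\'nski's own deduction of that corollary from his Main Theorem (Theorem \ref{2terms} in this paper). Your overall strategy --- decide, from the explicit middle term $P_{m+m'(w),w_+}\oplus P_{m,w^+}$, when it is indecomposable, and match that against the definition of a forbidden thread --- is indeed the only reasonable route and is how the cited source proceeds. But as written the proposal is a plan rather than a proof: the entire mathematical content of the lemma is the case analysis you postpone to ``bookkeeping,'' and you never actually state the vanishing criteria for $w^+$ and $w_+$, which are the objects the whole argument turns on.

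More seriously, the dictionary you announce as the target of that bookkeeping cannot be correct as stated. You assert that the summand attached to each endpoint is absent precisely when no arrow can be prepended (resp.\ appended) to $w$ to give a longer forbidden path, and that the middle term is indecomposable iff \emph{both} endpoint conditions hold --- i.e.\ iff $w$ is a forbidden thread. If both summands were absent the middle term would be zero, and the remark following Theorem \ref{2terms} explicitly rules this out for $A\ne k$; being at the mouth means that \emph{exactly one} of $w_+$ and $w^+$ survives. In Bobi\'nski's combinatorics the operation at an end either extends $w$ (when an extension is available) or truncates it, and the corresponding summand vanishes only in the much more restrictive situation where the truncation consumes all of $w$; ``no forbidden extension at this end'' is necessary but nowhere near sufficient for the summand to vanish, and the two ends of a forbidden thread behave asymmetrically (which is why precisely one summand dies). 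A useful sanity check is Example 7.1: for $A=k[x]/\langle x^2\rangle$ the two-term complex $A\xrightarrow{\,x\,}A$ is the string complex of the single-letter homotopy string $x$, which admits no forbidden extension at either end (since a forbidden path may not repeat an arrow), yet it sits at height $2$ of the component, not at the mouth --- so your proposed criterion misclassifies it, and the resolution requires both the correct vanishing conditions for $w^\pm$ and care with the convention for forbidden threads on cycles of relations used in \cite{B} and \cite{AAG}. Until the definitions of $w^+$ and $w_+$ are unpacked and this asymmetric one-summand-vanishes analysis is carried out, the equivalence is not proved.
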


\subsection{Auslander-Reiten triangles in $K^b(A\mbox{-}{\rm proj})$}
For each homotopy string $w$, in order to describe the Auslander-Reiten triangle involving the string complex $P_{m,w}$,
Bobi\'nski \cite{B} defines $w^{+}$, $w_{+}$ and $w^{+}_{+}$, and integer $m'(w), m''(w)$,
where $w^{+}$ and $w_{+}$ are either homotopy strings or $0$, and $w^{+}_{+}$ is a homotopy string.
Also, for an indecomposable automorphism $\mu$ of a finite-dimensional non-zero $k$-vector space $V$, one can define $\mu_1$ and $\mu_2$,
which are indecomposable automorphisms of the associated finite-dimensional $k$-vector spaces. See \cite{B} for details.

\begin{Thm}\label{2terms}$($\cite[{\rm Main theorem}]{B}$)$ Let $A$ be an indecomposable
finite-dimensional gentle $k$-algebra with $A\ne k$, $m$ an integer.

\vskip5pt

$(1)$  \ Let $w$ be a homotopy string. Then there is an Auslander-Reiten triangle in $K^b(A\mbox{-}{\rm proj}):$
$$P_{m,w}\longrightarrow P_{m+m'(w), w_{+}}\oplus P_{m, w^{+}}\longrightarrow P_{m+m''(w), w^{+}_{+}}\longrightarrow P_{m,w}[1]$$
consisting of string complexes, where if $w_+ = 0$ then $P_{m+m'(w), w_+} = 0$, and if  $w^+ = 0$ then $P_{m, w^+} = 0$.
\vskip5pt

$(2)$ \ Let $w$ be a homotopy band, and $\mu$ an indecomposable automorphism of a finite-dimensional vector space. Then there is an Auslander-Reiten triangle in $K^b(A\mbox{-}{\rm proj}):$
$$P_{m,w,\mu}\longrightarrow P_{m,w,\mu_1}\oplus P_{m,w,\mu_2}\longrightarrow P_{m,w,\mu}\longrightarrow P_{m,w,\mu}[1]$$
consisting of band complexes, where if $\mu_1 = 0$ then $P_{m, w, \mu_1} = 0$. In particular, a band complex is in a homogeneous tube.
\end{Thm}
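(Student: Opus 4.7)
The plan is to leverage the Happel embedding $K^b(A\mbox{-}{\rm proj})\hookrightarrow \hat A\mbox{-}\underline{\rm mod}$ into the stable category of the repetitive algebra $\hat A$. Since $A$ is gentle, $\hat A$ is special biserial, so the Auslander--Reiten theory of $\hat A\mbox{-}{\rm mod}$ is governed by strings and bands in the Butler--Ringel sense. Under the correspondence, string complexes $P_{m,w}$ go to string modules over $\hat A$ and band complexes $P_{m,w,\mu}$ go to band modules; in particular the combinatorial data $(m, w)$ and $(m, w, \mu)$ labels indecomposable $\hat A$-modules up to isomorphism (modulo projectives). Moreover, because $A$ is Gorenstein, this embedding preserves Auslander--Reiten components (\cite[Corollary 5.3]{HKR}, already cited above), so it suffices to compute the almost split sequences of the corresponding string/band modules over $\hat A$ and then translate the result back.

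For part $(1)$, I would begin by making the combinatorial definitions of $w^{+}$, $w_{+}$ and $w^{+}_{+}$ explicit: given a homotopy string $w = w_n\cdots w_1$, the symbol $w^{+}$ (resp.\ $w_{+}$) is obtained by deleting a maximal direct/inverse hook at one end of $w$ and adjoining the unique completion forced by the gentle maps $s', e'$ defined in Subsection 2.3, with the conventions that if no such completion exists one sets $w^{+}=0$ (resp.\ $w_{+}=0$); the integers $m'(w)$ and $m''(w)$ record the shifts in homological degree induced by these hook/cohook operations on the complex. With these definitions in hand, the Butler--Ringel almost split sequence in $\hat A\mbox{-}{\rm mod}$ for the string module attached to $(m, w)$ has middle term equal to the direct sum of the string modules attached to $(m+m'(w), w_+)$ and $(m, w^{+})$, and terminates at the string module attached to $(m+m''(w), w^{+}_{+})$. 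The remaining checks are that each of these data remains admissible (i.e., stays within the class of homotopy strings for $A$, not just for $\hat A$) and that the Butler--Ringel irreducible maps lift to morphisms between string complexes; both follow because the hook/cohook operations change a single term and one differential in a canonical way, realising the evident inclusions and projections of complexes.

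For part $(2)$ the argument is parallel but shorter. When $w$ is a homotopy band, the associated string in $\hat A$ is cyclic and its band modules $M(w, V, \mu)$ lie in a homogeneous tube with almost split sequence
\[
0\longrightarrow M(w, V, \mu)\longrightarrow M(w, V, \mu_1)\oplus M(w, V, \mu_2)\longrightarrow M(w, V, \mu)\longrightarrow 0,
\]
where $\mu_1$ and $\mu_2$ are the automorphisms obtained from $\mu$ by enlarging and reducing the underlying Jordan block by one. Transferring this triangle via the Happel embedding and inserting the fixed shift index $m$ produces the stated Auslander--Reiten triangle of band complexes; the homogeneity of the tube is immediate from the fact that both $w$ and $m$ are preserved by the AR translate.

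The main obstacle I expect is the combinatorial bookkeeping in part $(1)$: setting up $w^{+}$, $w_{+}$, $w^{+}_{+}$ together with the shifts $m'(w), m''(w)$ so that (a) the outputs really are homotopy strings for $A$ and not mere words over $\hat A$, and (b) the boundary cases --- trivial threads, ends of $w$ where a hook is forced to terminate, and the vanishing conventions $P_{m, 0} = 0$ --- match precisely the shape of the AR triangle. Once this bookkeeping is fixed, the actual verification that the resulting short exact sequence is almost split reduces to the classical Butler--Ringel computation on the special biserial algebra $\hat A$.
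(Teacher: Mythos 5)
This statement is quoted verbatim from Bobi\'nski \cite[Main theorem]{B}; the paper offers no proof of it, so there is no internal argument to compare against. Your route through the Happel embedding $K^b(A\mbox{-}{\rm proj})\hookrightarrow \hat{A}\mbox{-}\underline{\rm mod}$ and the Butler--Ringel combinatorics of the special biserial algebra $\hat A$ is a genuinely different strategy from Bobi\'nski's, who works directly with homotopy strings and explicit chain maps inside $K^b(A\mbox{-}{\rm proj})$; your approach is the one implicitly underlying \cite{AAG} and \cite{HKR}, and it can in principle be made to work because $A$ is Gorenstein, so the embedding preserves Auslander--Reiten components and an AR triangle of $\hat{A}\mbox{-}\underline{\rm mod}$ all of whose terms lie in the image restricts to an AR triangle of the full triangulated subcategory.

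However, as written the proposal has two genuine gaps. First, the operations $w^{+}$, $w_{+}$, $w^{+}_{+}$ and the integers $m'(w)$, $m''(w)$ are never actually defined --- you only gesture at ``deleting a maximal hook and adjoining the forced completion.'' These definitions \emph{are} the content of the theorem; without them there is nothing to verify against Butler--Ringel, and the matching of the homological shifts $m'(w), m''(w)$ with the grading on $\hat A$ is exactly the bookkeeping that cannot be waved away. Second, Butler--Ringel produces almost split \emph{sequences} in $\hat{A}\mbox{-}{\rm mod}$, and since $\hat A$ is self-injective (and special biserial but generally not a string algebra, so one must first pass to the quotient by the socles of the non-uniserial projective-injectives), the middle term of such a sequence frequently contains a projective-injective summand that must be stripped off when passing to $\hat{A}\mbox{-}\underline{\rm mod}$. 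This stripping is precisely the mechanism producing the degenerate cases $w_{+}=0$ or $w^{+}=0$ in the statement, and it is also where the hypothesis $A\ne k$ enters (to rule out $w_+=0=w^+$ simultaneously, as the paper remarks after the theorem). Your sketch treats the passage from sequences to triangles as automatic, so the cases where the middle term is indecomposable --- the ones this paper actually cares about, since they define ``at the mouth'' --- are exactly the ones your argument does not yet cover.
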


We stress that in Theorem \ref{2terms}$(1)$ the middle can not be zero, i.e., the situation $w^{+} = 0 = w_{+}$ can not occur (since $A\ne k$ by our assumption, this follows from Lemmas \ref{rv} and \ref{indec}), thus the middle can not be zero. The same remark on Theorem \ref{2terms}$(2)$.

\vskip5pt

Note that band complex $P_{m,w,\mu}$ is at the mouth if and only if $\mu$ is an indecomposable automorphism of $1$-dimensional vector space (i.e. $\mu\in k^{*}$).
By Theorem \ref{2terms}, the number of indecomposable direct summands of the middle terms of an Auslander-Reiten triangle is $1$ or $2$; and by Theorems \ref{sorb} and \ref{2terms}, a component of the Auslander-Reiten quiver of $K^b(A\mbox{-}{\rm proj})$ consists of either string complexes, or band complexes.

\subsection{Mapping cones of irreducible maps between string complexes} The following fact is known in \cite[p.38]{BGS}.
In $\Lambda$-mod of artin algebra $\Lambda$, there is a corresponding result (see \cite{Br}).

\begin{Lem}\label{cone2} \ Let $f$ be an irreducible map between string complexes in $K^b(A\mbox{-}{\rm proj})$. Then ${\rm Cone}(f)$ is at the mouth.
\end{Lem}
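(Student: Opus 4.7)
My plan is to compute $\mathrm{Cone}(f)$ explicitly using Bobi\'nski's two-term models for string complexes together with the Auslander-Reiten triangle from Theorem \ref{2terms}(1) and the octahedral axiom, and then to read off the homotopy string of the cone and verify that it is a forbidden thread, so that Lemma \ref{mouth} applies.

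Write $X = P_{m,w}$. Since $f$ is irreducible out of the indecomposable source $X$, it agrees, modulo the radical squared and up to automorphism of the target, with one of the components $g_+\colon X \to P_{m+m'(w), w_+}$ or $g^+\colon X \to P_{m, w^+}$ of the source map of the AR triangle
$$X \xrightarrow{(g_+, g^+)^T} P_{m+m'(w), w_+} \oplus P_{m, w^+} \xrightarrow{(h^+, h_+)} P_{m+m''(w), w^+_+} \to X[1],$$
and by symmetry we may take $f = g^+$. If $w_+ = 0$, the middle term is already indecomposable, so $X$ is at the mouth and $\mathrm{Cone}(f) = P_{m+m''(w), w^+_+} = \tau^{-1}X$, which is at the mouth because $\tau^{-1}$ preserves this property. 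Otherwise, applying the octahedral axiom to the factorization $X \to P_{m+m'(w), w_+} \oplus P_{m, w^+} \xrightarrow{\pi_2} P_{m, w^+}$ (whose composition is $f$) produces the distinguished triangle
$$P_{m+m'(w), w_+} \xrightarrow{h^+} P_{m+m''(w), w^+_+} \longrightarrow \mathrm{Cone}(f) \longrightarrow P_{m+m'(w), w_+}[1],$$
so that $\mathrm{Cone}(f) \cong \mathrm{Cone}(h^+)$, the mapping cone of an irreducible map between two specific string complexes.

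To finish, I would combine this with the explicit two-term models for $P_{m+m'(w), w_+}$ and $P_{m+m''(w), w^+_+}$ from \cite[Section 3]{B}: $h^+$ is given componentwise by projections of projectives, and after contracting contractible summands its mapping cone simplifies to a string complex $P_{\tilde m, \tilde w}$, whose underlying homotopy string $\tilde w$ is obtained by removing the maximal common subword shared by $w_+$ and $w^+_+$ at their common endpoint $e(w_+) = s(w^+_+)$. Using the gentle conditions (G2)-(G3) and the sign rules (i)-(iv) for $s', e'$, a short case analysis shows that at each endpoint of $\tilde w$ one necessarily meets a relation obstructing any further extension, i.e., neither $\alpha\tilde w$ nor $\tilde w\alpha$ is a forbidden path for any $\alpha\in Q_1$. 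Hence $\tilde w$ is a forbidden thread, and Lemma \ref{mouth} yields the claim.

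The main obstacle is the combinatorial bookkeeping in the last step: several cases arise according to which endpoints of $w$ admit further direct or inverse extensions under the gentle relations, and the signs $s', e'$ must be tracked carefully through the definitions of $w_+$, $w^+$, and $w^+_+$. A more conceptual alternative is to adapt Br\"uning's argument \cite{Br} for string modules over string algebras, transferring it to $K^b(A\mbox{-}{\rm proj})$ via the description of string complexes as complexes of projectives given in \cite{BM}.
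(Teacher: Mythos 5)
The paper offers no proof of Lemma \ref{cone2}; it is quoted from \cite[p.~38]{BGS}, with \cite{Br} cited for the module-theoretic analogue. So there is no argument in the text to compare yours with, and your attempt has to stand on its own. It has two genuine gaps. The first is the opening reduction: knowing that $f$ agrees with the standard component $g^+$ modulo ${\rm rad}^2$ and up to an automorphism of the target does not let you replace $f$ by $g^+$, because mapping cones are not invariant under perturbation by ${\rm rad}^2$. What is true is that $f$ itself is a component of \emph{some} minimal left almost split map $X\to Y\oplus E'$, so Lemma \ref{con1} applies to $f$ directly and gives ${\rm Cone}(f)\cong{\rm Cone}(g')$ for the corresponding component $g'\colon E'\to\tau^{-1}X$; but $g'$ is again an arbitrary irreducible map between string complexes, so this step only trades one instance of the problem for another rather than reducing to the explicitly known map $h^+$.

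The second gap is decisive: the concluding claim that ${\rm Cone}(f)$ is a string complex $P_{\tilde m,\tilde w}$ with $\tilde w$ a forbidden thread, so that Lemma \ref{mouth} finishes the proof, is false in general. Let $A$ be the Kronecker algebra of Example \ref{bandatmouth}$(2)$. Every nonzero map $\alpha+\lambda\beta\colon P(2)\to P(1)$ is an irreducible map between string (stalk) complexes, since ${\rm rad}^2(P(2),P(1))=0$, and its cone is the two-term complex $0\to P(2)\xrightarrow{\alpha+\lambda\beta}P(1)\to 0$, which the paper itself identifies as the \emph{band} complex $P_{w,\lambda}$ for the homotopy band $w=\beta^{-1}\alpha$. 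Thus the cone of an irreducible map between string complexes need not be a string complex at all, and it genuinely depends on the scalar $\lambda$ (the complexes $P_{w,\lambda}$ lie in pairwise distinct homogeneous tubes), not merely on the combinatorial shape of $f$; no purely combinatorial surgery on homotopy strings can output it. The lemma still holds in this example because $P_{w,\lambda}$ is at the mouth of a homogeneous tube, but your route through forbidden threads and Lemma \ref{mouth} cannot reach that conclusion. Any complete proof has to allow for, and separately handle, the band-complex outcome --- which is presumably why the authors defer to \cite{BGS} rather than arguing as you propose.
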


\section{\bf Proof of Theorem \ref{indsphericalmouth}}

\subsection{} The following observation is essentially due to I. Reiten and M. Van den Bergh [RV].

\begin{Lem}\label{rv} Let $\mathcal{T}$ be an indecomposable
$k$-linear Hom-finite Krull-Schmidt triangulated category with Serre functor. Suppose that $\mathcal T$ is homotopy-like and $\mathcal T\ncong \mathcal D^b(k)$.
Then for any indecomposable object $M$ of $\mathcal T$,
there are no Auslander-Reiten triangles of the form $\tau M\longrightarrow 0 \longrightarrow M \longrightarrow S(M);$
and there is a non-zero morphism  $f: M \longrightarrow S(M)$ which is not an isomorphism. \end{Lem}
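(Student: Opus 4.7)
The plan is to derive both conclusions simultaneously from the single claim that the middle term of the Auslander--Reiten triangle
\[
\tau M \xrightarrow{u} E \xrightarrow{v} M \xrightarrow{w} \tau M[1]=S(M)
\]
cannot be zero. The connecting morphism $w$ is automatically nonzero because an Auslander--Reiten triangle is non-split by definition; so either $w$ is not an isomorphism---in which case it is itself the desired morphism $f\colon M\to S(M)$ of the second assertion---or $w$ is an isomorphism, which forces $E=0$ and contradicts the first assertion. Thus (1) and (2) collapse to the single statement $E\ne 0$, which I would argue by contradiction.

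Assume therefore $E=0$, so that $u=v=0$. The almost-split property says that every morphism $N\to M$ from an indecomposable $N$ which is not a split epimorphism factors through $v=0$, hence vanishes. Specializing to $N=M$ forces $\mathrm{rad}\,\End(M)=0$, so the local $k$-algebra $\End(M)$ equals $k$; specializing to indecomposable $N\not\cong M$ yields $\Hom(N,M)=0$. Since $E=0$ also gives $S(M)\cong \tau M[1]\cong M$, Serre duality now produces
\[
\Hom(M,N)\cong \Hom(N,S(M))^{\ast}\cong \Hom(N,M)^{\ast}=0
\]
for every indecomposable $N\not\cong M$.

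The next step exploits the indecomposability of $\mathcal T$ to remove any indecomposable outside the shift-orbit of $M$. Let $\mathcal T_M$ be the thick triangulated subcategory generated by $M$, and let $\mathcal T'$ be the additive closure of the indecomposables of $\mathcal T$ not isomorphic to any $M[i]$. Applying the Hom-vanishing to all shifts gives $\Hom(\mathcal T_M,\mathcal T')=\Hom(\mathcal T',\mathcal T_M)=0$, and a short check with connecting morphisms shows that $\mathcal T'$ is closed under cones. Hence $\mathcal T=\mathcal T_M\oplus \mathcal T'$ as triangulated categories, so indecomposability of $\mathcal T$ forces $\mathcal T'=0$ and every indecomposable is some $M[i]$. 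By the homotopy-like hypothesis these shifts are pairwise non-isomorphic, and the Hom-vanishing yields $\Hom(M[i],M[j])=\delta_{ij}\,k$, matching both the Hom-structure and the Auslander--Reiten triangles of $\mathcal D^b(k)$.

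The main obstacle I anticipate is the final step: upgrading this Hom/AR match to a genuine triangle equivalence $\mathcal T\cong \mathcal D^b(k)$, which will give the required contradiction with $\mathcal T\ncong \mathcal D^b(k)$. The cleanest way is to observe that in this rigid situation every distinguished triangle of $\mathcal T$ must be a direct sum of shifts of the trivial triangles $M\xrightarrow{\mathrm{id}}M\to 0\to M[1]$ and the Auslander--Reiten triangles $M[i-1]\to 0\to M[i]\xrightarrow{\cong} M[i]$, so that the assignment $k[i]\leftrightarrow M[i]$ extends to a triangle functor. Verifying this rigidity of the triangulated structure is the only non-routine point and is exactly where the Reiten--Van den Bergh input does the work.
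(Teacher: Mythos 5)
Your proposal is correct and its core is the same as the paper's: assuming the middle term $E$ of the Auslander--Reiten triangle ending at $M$ vanishes, you deduce $\End(M)=k$, $S(M)\cong M$, and the vanishing of all Hom spaces between $M$ and the other indecomposables (using the homotopy-like hypothesis to kill $\Hom(M,M[i])$ for $i\ne 0$), then split $\mathcal T$ as $\langle M\rangle\times\mathcal T'$ and use indecomposability to force $\mathcal T\cong\mathcal D^b(k)$, a contradiction; the paper likewise leaves the final upgrade to a triangle equivalence at the same level of detail (citing Rouquier for the structure of $\langle M\rangle$). The one genuine difference is your treatment of the second assertion: you take $f$ to be the connecting morphism $w$ of the Auslander--Reiten triangle, which is nonzero since the triangle is non-split and a non-isomorphism since $E\ne 0$ --- a cleaner route than the paper's, which constructs $f$ via Serre duality from the canonical surjection $\End(M)\twoheadrightarrow k$ and then invokes the Reiten--Van den Bergh argument to recognize the resulting triangle as almost split.
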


{\bf Remark.} \ In $\mathcal D^b(k)$ any non-zero morphism  $k \longrightarrow S(k) = k$
is an isomorphism, and $k[-1] \longrightarrow 0 \longrightarrow k \stackrel {{\rm Id}_k} \longrightarrow k$ is an Auslander-Reiten triangle.

\vskip5pt

\noindent {\bf Proof of Lemma \ref{rv}.} \ Otherwise, assume
that $\tau M \longrightarrow 0 \longrightarrow M \stackrel {h} \longrightarrow S(M)$ is an Auslander-Reiten triangle.
Suppose that $g: X\longrightarrow M$ is an arbitrary morphism with $X$ indecomposable and $X\ncong M$.
Then $g$ factors through $0$, thus $g = 0$. So $\Hom_{\mathcal T}(X, M) = 0$.
Since $h: M \longrightarrow S(M)$ is an isomorphism, $\tau M = S(M)[-1] \cong M[-1]$. So $M \longrightarrow 0 \longrightarrow M[1] \stackrel {h[1]} \longrightarrow M[1]$
is an Auslander-Reiten triangle. Similarly, $\Hom_{\mathcal T}(M, Y) = 0$ for an
arbitrary indecomposable object $Y$ with $Y\ncong M$.

\vskip5pt

Let $\langle M\rangle$ be the smallest triangulated subcategory of $\mathcal T$ containing $M$. Since by assumption $M\ncong M[i]$ for
each $i\ne 0$, it follows that $\Hom_\mathcal T(M, M[i]) = 0 = \Hom_{\mathcal T}(M[i], M)$ for all $i\ne 0$. Since $\tau (M) \longrightarrow 0 \longrightarrow M \stackrel {h} \longrightarrow S(M)$ is an Auslander-Reiten triangle, any non-isomorphism $M\longrightarrow M$ factors through $0$, i.e., $\End(M)$ is a field, and hence $\End(M) =k$, since $k$ is algebraically closed.  All together one has
$$\Hom_\mathcal T(M[i], M[j]) = 0, \ \ \forall \ i \ne j, \ \ \ \End(M[i]) = k, \ \ \forall \ i\in\Bbb Z.$$
By the construction of
$\langle M\rangle$ (see [Ro, 3.1]) one sees that $\langle M\rangle$ is exactly the full subcategory of $\mathcal T$
consisting of finite direct sums of objects of the form $M[n]$ with $n\in\Bbb Z$. Consider the functor $\mathcal D^b(k)\rightarrow \langle M\rangle $ given by
$$k[n]\mapsto M[n], \ \ \  \lambda {\rm Id}_{k[n]} \mapsto \lambda {\rm Id}_{M[n]}, \ \ \ \forall \ n\in\Bbb Z, \ \lambda\in k.$$ This gives the triangle-equivalence
$\mathcal D^b(k)\cong \langle M\rangle$.

\vskip5pt

Let  $\mathcal T'$ be the smallest triangulated subcategory of $\mathcal T$ containing all the indecomposable objects which are not isomorphic to $M[n]$ for all $n\in\Bbb Z$.
Then $\Hom_\mathcal T(\langle M\rangle, \mathcal T') = 0 = \Hom_\mathcal T(\mathcal T', \langle M\rangle)$.
Since by assumption $\mathcal T$ is Krull-Schmidt, it follows that any object of $\mathcal T$ is a direct sum $M'\oplus X$ with $M'\in \langle M\rangle$ and $X\in \mathcal T'$.
Thus $\mathcal T\cong \langle M\rangle \times \mathcal T'$. Since by assumption $\mathcal{T}$ is indecomposable, $\mathcal T\cong \langle M\rangle\cong \mathcal D^b(k)$, which contradicts the assumption. This proves the first assertion.

\vskip5pt

Now we show the second assertion. Since $\End(M)$ is a local algebra and $k$ is algebraically closed, $\End(M)/{\rm rad}\End(M) \cong k.$ Put
$\theta: \End(M)\longrightarrow k$ to be the canonical surjective map with ${\rm Ker}\theta = {\rm rad}\End(M)$. Then $\theta\in(\End(M))^*$.
Consider the $k$-linear isomorphism $\eta: \Hom_{\mathcal{T}}(M, S(M))\longrightarrow (\End(M))^*,$ and let $f: M\longrightarrow S(M)$ be the non-zero morphism
such that $\eta(f) = \theta$. Embedding $f$ into a distinguished triangle
$$\tau M \longrightarrow Z \longrightarrow M \stackrel {f} \longrightarrow S(M) = (\tau M)[1].$$
By the proof of [RV, Theorem I.2.4], it is an Auslander-Reiten triangle. Thus $f$ is not an isomorphism (otherwise $\tau M \longrightarrow 0 \longrightarrow M \stackrel {f} \longrightarrow S(M)$ is an Auslander-Reiten triangle).
\hfill $\square$

\subsection{} Applying Lemma \ref{rv} one can prove

\begin{Lem}\label{indspherical}  Let $\mathcal T$ be an indecomposable
$k$-linear Hom-finite Krull-Schmidt triangulated category with Serre functor. Assume that $\mathcal T$ is homotopy-like.
Then any exceptional $1$-cycle in $\mathcal T$  is indecomposable.
\end{Lem}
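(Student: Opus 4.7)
The plan is to rule out the only decomposable case permitted by the paragraph preceding the lemma, namely $d = 0$ with $\End_{\mathcal T}(E) \cong k \times k$. Assume $E$ is such an exceptional $1$-cycle. Since $\End_{\mathcal T}(E)$ has dimension $2$ and splits as a product of two copies of $k$, the Krull-Schmidt decomposition must read $E = E_1 \oplus E_2$ with $E_1, E_2$ indecomposable and non-isomorphic, $\End_{\mathcal T}(E_j) = k$, and $\Hom_{\mathcal T}(E_1, E_2) = 0 = \Hom_{\mathcal T}(E_2, E_1)$. The vanishing of $\Hom_{\mathcal T}(E, E[n])$ for all $n \ne 0$ (implied by $\Hom^{\bullet}(E, E) \cong k \oplus k$ being concentrated in degree zero) then forces $\Hom_{\mathcal T}(E_i, E_j[n]) = 0$ for every $n \ne 0$ and every $i, j$, i.e.\ $E_1$ and $E_2$ are $\Hom$-orthogonal in every degree.

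Next, the $0$-Calabi-Yau condition $S(E) \cong E$ leaves two Krull-Schmidt possibilities: either $(A)$ $S(E_j) \cong E_j$ for $j = 1, 2$, or $(B)$ $S(E_1) \cong E_2$ and $S(E_2) \cong E_1$. Possibility $(B)$ is immediately excluded by Serre duality, since $\dim_k \Hom_{\mathcal T}(E_1, S(E_1)) = \dim_k \End_{\mathcal T}(E_1) = 1$, whereas $\Hom_{\mathcal T}(E_1, E_2) = 0$. In case $(A)$, any nonzero morphism $E_1 \to S(E_1) \cong E_1$ is a scalar and hence an isomorphism, because $\End_{\mathcal T}(E_1) = k$; in particular, the Reiten--Van den Bergh morphism $f: E_1 \to S(E_1)$ constructed in the proof of Lemma \ref{rv} is an isomorphism, so the Auslander-Reiten triangle it produces takes the form $\tau E_1 \to 0 \to E_1 \stackrel{f}{\to} S(E_1)$. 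The contrapositive of Lemma \ref{rv} now forces $\mathcal T \cong \mathcal D^b(k)$.

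But this is the final contradiction: in $\mathcal D^b(k)$ every indecomposable is a shift of $k$, so $E_1 \cong k[a]$ and $E_2 \cong k[b]$ for some integers $a, b$, whence $\Hom_{\mathcal T}(E_1, E_2[a-b]) = \Hom_{\mathcal D^b(k)}(k[a], k[a]) = k \ne 0$, contradicting the $\Hom$-orthogonality of $E_1$ and $E_2$ established in the first step. The main obstacle is recognising the Auslander-Reiten triangle with zero middle term as precisely the configuration forbidden by Lemma \ref{rv} outside $\mathcal D^b(k)$, and then exploiting the second indecomposable summand $E_2$ (which has no counterpart in the set-up of Lemma \ref{rv}) to close the argument inside $\mathcal D^b(k)$; the remaining steps are routine applications of Serre duality, Krull-Schmidt, and the hypothesis that $\Hom^{\bullet}(E, E)$ is concentrated in degree zero.
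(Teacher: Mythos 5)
Your proof is correct and follows essentially the same route as the paper: the same decomposition $E=E_1\oplus E_2$ forced into the case $d=0$, $\End(E)\cong k\times k$, the same dichotomy according to whether $S$ fixes or swaps the two summands, and the same use of Lemma~\ref{rv} in the first case and of Serre duality ($\Hom(E_1,S(E_1))\cong\End(E_1)^*\ne 0$ versus $\Hom(E_1,E_2)=0$) in the second. The only difference is cosmetic: the paper dismisses $\mathcal T\cong\mathcal D^b(k)$ at the outset by noting $\mathcal D^b(k)$ has no exceptional $1$-cycles, whereas you reach $\mathcal T\cong\mathcal D^b(k)$ at the end of your case $(A)$ and then contradict the degreewise Hom-orthogonality of $E_1$ and $E_2$ there; both are fine.
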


\begin{proof} \ Since $\mathcal D^b(k)$ has no exceptional $1$-cycles, one may assume that $\mathcal T\ncong \mathcal D^b(k).$

Assume that $E\cong E_1\oplus E_2$ is an exceptional $1$-cycle with $E_1 \ne 0\ne E_2$. Then $E$ has to be a $0$-Calabi-Yau object, so $S(E)\cong E$ and
$\Hom^{\bullet}(E, E)\cong k\oplus k$. In particular, $\Hom_{\mathcal{T}}(E_1, E_1)\cong k \cong \Hom_{\mathcal{T}}(E_2, E_2)$
and $\Hom_{\mathcal{T}}(E_1, E_2)=0$. It follows that $E_1$ and $E_2$ are indecomposable.
Since $\mathcal T$ is Krull-Schmidt and $E_1\oplus E_2\cong S(E_1)\oplus S(E_2)$, either $E_1\cong S(E_1)$ or $E_2\cong S(E_1)$.

If $E_1\cong S(E_1)$, then by Lemma \ref{rv}, there exists a non-isomophism $0\ne f: E_1\longrightarrow S(E_1)\cong E_1$. This contradicts $\Hom_{\mathcal T}(E_1, E_1)\cong k$.

If $E_2\cong S(E_1)$, then one gets a contradiction $k\cong \Hom_{\mathcal T}(E_1, E_1)\cong \Hom(E_1, S(E_1))\cong \Hom_{\mathcal T}(E_1, E_2)$ $=0$.
This completes the proof. \end{proof}

\subsection{} The following general result in triangulated category is a consequence of A. Neemann [N, Lemma 1.4.4].
\begin{Lem}\label{con1} Let $\mathcal T$ be a triangulated category, and
$X\xrightarrow{\binom{f_1}{f_2}} Y_1\oplus Y_2\xrightarrow{(g_1,-g_2)} Z\longrightarrow X[1]$ a distinguished triangle in $\mathcal T$. Then
${\rm Cone}(f_1)\cong {\rm Cone}(g_2)$, ${\rm Cone}(f_2)\cong {\rm Cone}(g_1)$, ${\rm Cone}(g_1f_1)\cong {\rm Cone}(f_1)\oplus {\rm Cone}(g_1)$.
\end{Lem}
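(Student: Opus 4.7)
All three isomorphisms are instances of standard triangulated-category machinery, packaged in Neeman's Lemma 1.4.4. For parts $(1)$ and $(2)$, the hypothesized distinguished triangle
$$X\xrightarrow{\binom{f_1}{f_2}}Y_1\oplus Y_2\xrightarrow{(g_1,-g_2)}Z\longrightarrow X[1]$$
is precisely the Mayer--Vietoris triangle associated to the commutative square with edges $f_1,f_2,g_1,g_2$; this realization identifies the square as homotopy cartesian. I would then invoke the standard consequence that, in such a square, the cones of parallel edges are canonically isomorphic, yielding ${\rm Cone}(f_1)\cong{\rm Cone}(g_2)$ and ${\rm Cone}(f_2)\cong{\rm Cone}(g_1)$ at once. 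Concretely, the isomorphism ${\rm Cone}(f_1)\cong{\rm Cone}(g_2)$ can be exhibited by completing the vertical maps $f_1,g_1$ and the vertical maps $f_2,g_2$ to cone triangles and noting that the induced map of third terms must be an isomorphism since two out of three morphisms of triangles are identities.

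For part $(3)$, I would apply the octahedral axiom to the composition $X\xrightarrow{f_1}Y_1\xrightarrow{g_1}Z$. Using the cone triangles for $f_1$, $g_1$ and $g_1f_1$, this produces a distinguished triangle
$${\rm Cone}(f_1)\longrightarrow{\rm Cone}(g_1f_1)\longrightarrow{\rm Cone}(g_1)\xrightarrow{\ \delta\ }{\rm Cone}(f_1)[1].$$
The conclusion reduces to showing $\delta=0$, for then the triangle splits and yields the required direct sum decomposition. To establish this, I would exploit the identity $g_1f_1=g_2f_2$ (forced by the fact that two consecutive arrows $\binom{f_1}{f_2}$ and $(g_1,-g_2)$ in the original distinguished triangle compose to zero), together with the isomorphism ${\rm Cone}(f_1)\cong{\rm Cone}(g_2)$ from part $(1)$. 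Concretely, the commutative square with vertices $X,Y_2,Y_1,Z$ and diagonal $g_1f_1=g_2f_2$ produces a natural morphism from the cone triangle of $f_1$ to the cone triangle of $g_2$ whose third component is the isomorphism of part $(1)$; composing this with the octahedral arrow ${\rm Cone}(g_1f_1)\to{\rm Cone}(g_2)$ gives a retract ${\rm Cone}(g_1f_1)\to{\rm Cone}(f_1)$ of the octahedral inclusion ${\rm Cone}(f_1)\to{\rm Cone}(g_1f_1)$, which forces the splitting.

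The main obstacle I anticipate is the bookkeeping needed to verify that $\delta=0$ rigorously: one must check that the map constructed from the Mayer--Vietoris data genuinely splits the octahedral morphism rather than merely being a compatible arrow. If tracking the octahedron becomes unwieldy, a cleaner alternative is to apply the octahedral axiom symmetrically to both $g_1f_1$ and $g_2f_2$, obtain two distinguished triangles with the common middle term ${\rm Cone}(g_1f_1)$, and then use parts $(1)$ and $(2)$ to identify outer terms and derive the splitting from the symmetry of the Mayer--Vietoris configuration. Either way, the essential input is that a homotopy cartesian square interacts well with the octahedral axiom, a fact ultimately encoded in Neeman's lemma cited in the statement.
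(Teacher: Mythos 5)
Your overall architecture matches the paper's: parts one and two by citing Neeman's Lemma 1.4.4 for the homotopy cartesian square, and part three by running the octahedral axiom on $g_1f_1$ and reducing to the vanishing of the connecting map $\delta$. One minor error first: your ``concrete'' justification of ${\rm Cone}(f_1)\cong{\rm Cone}(g_2)$ via ``two out of three morphisms of triangles are identities'' does not work --- the morphism from the cone triangle of $f_1$ to that of $g_2$ has vertical components $f_2$ and $g_1$, neither of which is an identity, so the five-lemma-style argument does not apply; the isomorphism really is the content of Neeman's lemma.

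The substantive problem is that your mechanism for $\delta=0$ is incomplete precisely at the point you flag as ``the main obstacle.'' You propose to compose the octahedral map $p'\colon{\rm Cone}(g_2f_2)\to{\rm Cone}(g_2)$ with the inverse of the part-one isomorphism to obtain a retraction of $i\colon{\rm Cone}(f_1)\to{\rm Cone}(g_1f_1)$. But the composite $p'\circ i$ is merely \emph{some} filler of the commutative square between the cone triangles of $f_1$ and $g_2$: Neeman's lemma guarantees that an isomorphism filler exists, not that every filler is one. Two fillers of the same square differ by a composite through the connecting map ${\rm Cone}(f_1)\to X[1]$, and adding such a term to an isomorphism need not yield an isomorphism, so ``$r\circ i$ is invertible'' does not follow and the splitting is not established. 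The paper closes this gap by using the full output of Neeman's Lemma 1.4.4: besides the isomorphism $h\colon{\rm Cone}(f_2)\to{\rm Cone}(g_1)$ it supplies the compatibility $bh=f_1[1]\,a$ of the connecting maps $a\colon{\rm Cone}(f_2)\to X[1]$ and $b\colon{\rm Cone}(g_1)\to Y_1[1]$. The octahedron identifies $\delta$ as $c[1]\circ b$ with $c\colon Y_1\to{\rm Cone}(f_1)$, whence $\delta=c[1]\,f_1[1]\,a\,h^{-1}=(cf_1)[1]\,a\,h^{-1}=0$ because $cf_1=0$ (consecutive maps in a triangle compose to zero). That one-line computation, resting on the connecting-map compatibility rather than on a constructed retraction, is the ingredient your plan is missing.
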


\begin{proof} Applying [N, Lemma 1.4.4] to the homotopy cartesian square

$$\xymatrix{X\ar[r]^{f_1} \ar[d]_-{f_2} & Y_1\ar[d]^{g_1}
\\ Y_2\ar[r]^{g_2} & Z}$$ one has ${\rm Cone}(f_1)\cong {\rm Cone}(g_2)$ and ${\rm Cone}(f_2)\cong {\rm Cone}(g_1)$, such that there is a commutative diagram of distinguished triangles:
$$\xymatrix{X\ar[r]^{f_2} \ar[d]_-{f_1} & Y_2\ar[r] \ar[d]^{g_2}& {\rm Cone}(f_2)\ar[r]^-a\ar[d]_-{\cong}^-h& X[1]\ar[d]^-{f_1[1]}&
\\ Y_1\ar[r]^{g_1} & Z\ar[r] & {\rm Cone}(g_1)\ar[r]^-b& Y_1[1]}$$

\vskip5pt

\noindent Thus $bh = f_1[1]a$. Applying the octahedral axiom one gets the commutative diagram

$$\xymatrix@R=0.6cm{X\ar[r]^-{f_1}\ar@{=}[d]& Y_1\ar[r]^-c\ar[d]_-{g_1}& {\rm Cone}(f_1)\ar[r]\ar@{-->}[d]& X[1]\ar@{=}[d] &&
\\X\ar[r]^-{g_1f_1}  & Z\ar[r]\ar[d] & {\rm Cone}(g_1f_1)\ar[r]\ar@{-->}[d]  & X[1]\ar[d]^-{f_1[1]} &&
\\ &  {\rm Cone}(g_1)\ar@{=}[r]\ar[d]^-b   &  {\rm Cone}(g_1)\ar[r]^-b\ar@{-->}[d]^-w   & Y_1[1] &&
\\ &Y_1[1]\ar[r]^-{c[1]}& {\rm Cone}(f_1)[1] &&&}$$

\vskip5pt

\noindent The third column gives the distinguished triangle
$${\rm Cone}(f_1)\longrightarrow {\rm Cone}(g_1f_1)\longrightarrow {\rm Cone}(g_1)\stackrel {w}\longrightarrow {\rm Cone}(f_1)[1].$$ Since
$w=c[1]b=c[1]f_1[1] ah^{-1} = (cf_1)[1]ah^{-1} = 0$,  this distinguished triangle splits, and hence
${\rm Cone}(g_1f_1)\cong {\rm Cone}(f_1)\oplus {\rm Cone}(g_1)$. \end{proof}

\subsection{}  The following lemma will play an important role in this paper.

\begin{Lem} \label{gfnot0}  \ Let $\mathcal T$ be a $k$-linear Hom-finite Krull-Schmidt triangulated category with Serre functor.
Suppose that $\tau E\xrightarrow{\binom{f_1}{f_2}} Z_1\oplus Z_2\xrightarrow{(g_1,-g_2)} E\xrightarrow{\sigma} S(E)$ is an Auslander-Reiten triangle of $\mathcal T$ with $Z_1$ indecomposable and $Z_2\ne 0$.
If one of the following conditions is satisfied, then $g_1f_1: \tau E\longrightarrow E$ is not zero$:$

\vskip5pt

$(1)$ \ $\Hom_\mathcal T(E, E)\cong k$.

\vskip5pt

$(2)$ \ $\Hom_\mathcal T(E, E)\cong k\oplus k$ and $S(E)\cong E$.
\end{Lem}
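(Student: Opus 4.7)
The plan is to argue by contradiction: assume $g_1f_1 = 0$. Since $(g_1,-g_2)\binom{f_1}{0} = g_1f_1 = 0$, exactness of the long exact sequence obtained by applying $\Hom_\mathcal{T}(\tau E,-)$ to the AR triangle produces an endomorphism $\beta\in\End_\mathcal{T}(\tau E)$ with $\binom{f_1}{0} = \binom{f_1}{f_2}\circ\beta$. Comparing components yields the two key equations
\[
f_1\beta = f_1,\qquad f_2\beta = 0,
\]
which I will play against the non-vanishing of $f_1$ and $f_2$, using the algebraic structure of $\End_\mathcal{T}(\tau E)$ supplied by hypothesis $(1)$ or $(2)$.

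Two preparatory facts are needed. First, $f_1\neq 0$ since $f_1$ is the irreducible component of the AR map into the indecomposable summand $Z_1$, and $f_2\neq 0$ by left minimality of the AR map: its component into any non-zero direct summand of the middle term must be non-zero, and $Z_2\ne 0$ by assumption. Second, $\tau = S[-1]$ is a triangle auto-equivalence, so $\End_\mathcal{T}(\tau E)\cong\End_\mathcal{T}(E)$ as $k$-algebras; the hypotheses on $\End(E)$ thus transfer to $\End(\tau E)$.

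In case $(1)$, $\End_\mathcal{T}(\tau E)\cong k$, so $\beta = \lambda\cdot\mathrm{id}$ for some $\lambda\in k$; then $f_1\beta = f_1$ with $f_1\neq 0$ forces $\lambda = 1$, and substituting into $f_2\beta = 0$ yields $f_2 = 0$, a contradiction. In case $(2)$, the object $E$ is indecomposable (being the target of an AR triangle), so $\End_\mathcal{T}(\tau E)$ is a local $k$-algebra of dimension $2$; algebraic closedness of $k$ then forces $\End_\mathcal{T}(\tau E)\cong k[\epsilon]/(\epsilon^2)$. Writing $\beta = \lambda + \mu\epsilon$: if $\lambda\neq 0$, then $\beta$ is a unit and $f_2\beta = 0$ forces $f_2 = 0$; if $\lambda = 0$, then $\beta = \mu\epsilon$ satisfies $\beta^2 = 0$, so iterating $f_1 = f_1\beta$ gives $f_1 = f_1\beta^2 = 0$, contradicting $f_1\neq 0$. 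Either way a contradiction, so $g_1f_1\neq 0$. The one mildly delicate step is the left-minimality argument for $f_2\neq 0$; beyond that the core of the proof is a short computation inside $\End_\mathcal{T}(\tau E)$. I note that the additional hypothesis $S(E)\cong E$ in case $(2)$ does not seem to be needed in this argument; it is presumably recorded because this is the setting in which Lemma~\ref{gfnot0} will be applied to exceptional $1$-cycles.
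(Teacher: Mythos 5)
Your proof is correct, and it takes a genuinely different route from the paper's. The paper first establishes (Lemma \ref{con1}, via Neeman's homotopy cartesian square lemma and the octahedral axiom) that ${\rm Cone}(g_1f_1)\cong{\rm Cone}(f_1)\oplus{\rm Cone}(g_1)$; assuming $g_1f_1=0$ this cone is $E\oplus S(E)$, Krull--Schmidt forces one of the two nonzero cones to be $E$, and in each sub-case one exhibits two (resp.\ three) linearly independent morphisms in $\Hom(E,S(E))$ or $\End(E)$, contradicting the dimension hypotheses. You instead apply $\Hom(\tau E,-)$ to the triangle, factor $\binom{f_1}{0}$ through the left almost split map to get $\beta\in\End(\tau E)$ with $f_1\beta=f_1$ and $f_2\beta=0$, and finish by a two-line computation in the local algebra $\End(\tau E)\cong\End(E)$, which is $k$ or $k[\epsilon]/(\epsilon^2)$ (the latter because $E$, as the end term of an AR triangle, is indecomposable, so $k\times k$ cannot occur). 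All the ingredients you invoke are sound: $f_1\neq 0$ and $f_2\neq 0$ follow from left minimality/irreducibility of the components of the AR map into nonzero summands, and the long exact sequence gives exactly the factorization you use. Your approach is more elementary --- no octahedron, no cone bookkeeping --- and your observation that the hypothesis $S(E)\cong E$ in case $(2)$ is superfluous is correct: the paper's argument genuinely uses an isomorphism $h\colon E\to S(E)$ to push $\sigma_1,\sigma$ into ${\rm rad}\,\End(E)$, whereas yours needs only locality and $\dim_k\End(E)=2$. The one thing the paper's route buys is the explicit cone decomposition of Lemma \ref{con1}, which is reused later (e.g.\ in Lemma \ref{2cyclefornonA3}), so it is not wasted effort there even though your shortcut suffices for this lemma.
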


\begin{proof} \ According to Lemma \ref{con1}, ${\rm Cone}(g_1f_1)\cong {\rm Cone}(f_1)\oplus {\rm Cone}(g_1)$. We claim that $g_1f_1: \tau E\longrightarrow E$ is not zero. Otherwise, the distinguished triangle ${\rm Cone}(g_1f_1)[-1]\longrightarrow \tau E\xrightarrow{g_1f_1=0}E\longrightarrow {\rm Cone}(f_1)\oplus {\rm Cone}(g_1)$ splits, and  hence ${\rm Cone}(f_1)\oplus {\rm Cone}(g_1)\cong E\oplus S(E)$. Note that ${\rm Cone}(f_1)\ne 0$ (otherwise, $f_1$ is an isomorphism, which contradicts that $f_1$ is an irreducible map).
Similarly, ${\rm Cone}(g_1)\ne 0$. Since $\mathcal T$  is Krull-Schmidt, it follows that either ${\rm Cone}(f_1)\cong E$ or ${\rm Cone}(g_1)\cong E$.

\vskip5pt

$(1)$ \ Assume that $\Hom_\mathcal T(E, E)\cong k$.

\vskip5pt

If ${\rm Cone}(f_1)\cong E$, then one has distinguished triangles
$$\xymatrix{\tau E\ar[r]^-{f_1} & Z_1\ar[r] & E\ar[r]^-{\sigma_1}& S(E) \\ \tau E\ar[r]^-{{\binom{f_1}{f_2}}} & Z_1\oplus Z_2\ar[r]^-{(g_1,-g_2)} & E\ar[r]^-\sigma & S(E).}$$
Note that $\sigma_1$ and $\sigma$ are linearly independent (otherwise, the two distinguished triangles above are isomorphic, and then  $Z_1\cong Z_1\oplus Z_2$, which is absurd).
Thus ${\rm dim}_k\Hom_\mathcal T(E, S(E))\ge 2$. This contradicts ${\rm dim}_k\Hom_\mathcal T(E, S(E)) = {\rm dim}_k\Hom_\mathcal T(E, E) = 1$.

\vskip5pt

If ${\rm Cone}(g_1)\cong E$, then one gets a distinguished triangle $Z_1\stackrel {g_1}\longrightarrow E\stackrel {s}\longrightarrow E \longrightarrow Z_1[1]$. Note that $s\ne 0$ (otherwise, we get a contradiction $Z_1 \cong E[-1]\oplus E$). Thus $s$ and ${\rm Id}_E$ are linearly independent (otherwise, $s$ is an isomorphism, and then one gets a contradiction $Z_1 = 0$).
So ${\rm dim}_k\Hom_\mathcal T(E, E)\ge 2$, which contradicts $\Hom_\mathcal T(E, E)\cong k$.

\vskip5pt

All together $g_1f_1\ne 0$.

\vskip5pt

$(2)$ \ Assume that $\Hom_\mathcal T(E, E)\cong k\oplus k$ and $S(E)\cong E$, say, with isomorphism $h: E\longrightarrow S(E)$.

\vskip5pt

If ${\rm Cone}(f_1)\cong E$, then as in the proof of $(1)$ one has distinguished triangles
$$\xymatrix{\tau E\ar[r]^-{f_1} & Z_1\ar[r] & E\ar[r]^-{\sigma_1}& S(E) \\ \tau E\ar[r]^-{{\binom{f_1}{f_2}}} & Z_1\oplus Z_2\ar[r]^-{(g_1,-g_2)} & E\ar[r]^-\sigma & S(E)}$$
such that  $h^{-1}\sigma_1$ and $h^{-1}\sigma$ are linearly independent in ${\rm rad}{\rm End}_\mathcal T(E)$. Since ${\rm End}_\mathcal T(E)$ is local, it follows that $h\sigma_1$, $h\sigma$, ${\rm Id}_E$ are linearly independent in ${\rm End}_\mathcal T(E)$, which contradicts ${\rm dim}_k\Hom_{\mathcal T}(E, E) = 2$.

\vskip5pt

If ${\rm Cone}(g_1)\cong E$, then as in the proof of $(1)$ one gets a distinguished triangle $Z_1\stackrel {g_1}\longrightarrow E\stackrel {s}\longrightarrow E \longrightarrow Z_1[1]$, with $s\ne 0$.
By the two distinguished triangles
$$\xymatrix{E[-1]\ar[r]& Z_1\ar[r]^-{g_1} & E\ar[r]^-{s}& E \\ E[-1]\ar[r]^-{{\binom{f_1h[1]}{f_2h[1]}}} & Z_1\oplus Z_2\ar[r]^-{(g_1,-g_2)} & E\ar[r]^-{h^{-1}\sigma} & E}$$
one knows that $s$ and $h^{-1}\sigma$ are linearly independent in ${\rm rad}{\rm End}_\mathcal T(E)$. Since ${\rm End}_\mathcal T(E)$ is local, it follows that $s$, $h^{-1}\sigma$, ${\rm Id}_E$ are linearly independent in ${\rm End}_\mathcal T(E)$, again contradicts ${\rm dim}_k\Hom_{\mathcal T}(E, E) = 2$.

\vskip5pt

All together  $g_1f_1\ne 0$. This completes the proof. \end{proof}

\subsection{}  To prove Theorem \ref{indsphericalmouth}, we first show the following

\begin{Lem} \label{sphericalmouth}  \ Let $\mathcal T$  be an indecomposable $k$-linear Hom-finite
Krull-Schmidt triangulated category with Serre functor. Assume that $\mathcal T$ is homotopy-like. Then any exceptional $1$-cycle in $\mathcal{T}$ is at the mouth.
\end{Lem}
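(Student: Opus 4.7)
I will argue by contradiction: assume the exceptional $1$-cycle $E$ is not at the mouth, so the Auslander--Reiten triangle ending at $E$ has the form
$$\tau E\xrightarrow{\binom{f_1}{f_2}} Z_1\oplus Z_2\xrightarrow{(g_1,-g_2)} E\xrightarrow{\sigma} S(E)$$
with $Z_1$ indecomposable and $Z_2\ne 0$. By Lemma \ref{indspherical} the object $E$ is indecomposable, hence $\End(E)$ is local; combined with $\Hom^{\bullet}(E,E)\cong k\oplus k[-d]$ this forces either $d\ne 0$ and $\End(E)\cong k$, or $d=0$ and $\End(E)\cong k[x]/\langle x^2\rangle$. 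The strategy in both cases is to apply Lemma \ref{gfnot0} to produce a nonzero composition $g_1f_1:\tau E\longrightarrow E$, and then to derive a contradiction from the Calabi--Yau relation $\tau E\cong E[d-1]$ together with the known shape of $\Hom^{\bullet}(E,E)$.

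\textbf{The case $d=0$.} Here $S(E)\cong E$ and $\dim_k\End(E)=2$, so Lemma \ref{gfnot0}(2) applies and yields $g_1f_1\ne 0$. However $\tau E\cong E[-1]$, so $\Hom(\tau E,E)\cong \Hom(E,E[1])$, and this vanishes because $\Hom^{\bullet}(E,E)$ is concentrated in degree $0$. This contradicts $g_1f_1\ne 0$.

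\textbf{The case $d\ne 0$.} Now $\End(E)\cong k$, so Lemma \ref{gfnot0}(1) gives $g_1f_1\ne 0$. Since $\tau E\cong E[d-1]$, we have $\Hom(\tau E,E)\cong \Hom(E,E[1-d])$; from $\Hom^{\bullet}(E,E)\cong k\oplus k[-d]$ this Hom space is nonzero only when $1-d\in\{0,d\}$, i.e.\ when $d=1$. So for $d\ne 0,1$ we are already done. For $d=1$, the nonzero map $g_1f_1$ lies in $\End(E)=k$ and is therefore an isomorphism; then Lemma \ref{con1} gives
$$0={\rm Cone}(g_1f_1)\cong {\rm Cone}(f_1)\oplus {\rm Cone}(g_1),$$
so $f_1:\tau E\to Z_1$ is an isomorphism, contradicting that $f_1$ is an irreducible component of the Auslander--Reiten triangle (with $Z_1$ indecomposable and nonzero).

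\textbf{Main obstacle.} The dimension count alone settles everything except the single subcase $d=1$ of the $d\ne 0$ branch, where $\Hom(\tau E,E)$ is one dimensional and a nonzero $g_1f_1$ is consistent. The essential input there is that the locality of $\End(E)$ promotes a nonzero endomorphism to an isomorphism, which then collapses the cone decomposition of Lemma \ref{con1} and kills the irreducible map $f_1$. This is the same mechanism behind Lemma \ref{gfnot0} and is what allows us to rule out the possibility that an exceptional $1$-cycle could have a reducible middle term in the Auslander--Reiten triangle.
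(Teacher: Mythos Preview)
Your argument is correct and follows the paper's strategy: invoke Lemma~\ref{gfnot0} to obtain $g_1f_1\ne 0$, then derive a contradiction from $\tau E\cong E[d-1]$ together with the known shape of $\Hom^{\bullet}(E,E)$. There is one small gap: ``not at the mouth'' also permits the middle term of the Auslander--Reiten triangle to be \emph{zero}, and you must rule this out before writing down the triangle with $Z_1\oplus Z_2$. The paper does this explicitly via Lemma~\ref{rv} (which requires $\mathcal{T}\ncong\mathcal{D}^b(k)$, automatic here since $\mathcal{D}^b(k)$ has no exceptional $1$-cycles).

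Your handling of the subcase $d=1$ is a harmless variant of the paper's. You argue that the nonzero $g_1f_1$ lies in $\End(E)\cong k$ and is therefore an isomorphism, then use Lemma~\ref{con1} to force ${\rm Cone}(f_1)=0$ and hence $f_1$ invertible, contradicting irreducibility. The paper runs the contrapositive: it first observes (before the case split) that $g_1f_1$ is never an isomorphism, since otherwise $f_1$ would be a split monomorphism; in the case $d=1$ this makes $g_1f_1$ and ${\rm Id}_E$ linearly independent in the one-dimensional space $\End(E)$. The two arguments encode the same contradiction. Your case organization ($d=0$ versus $d\ne 0$, then the subcase $d=1$) differs cosmetically from the paper's ($d=1$ versus $d\ne 1$), but the content is identical.
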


\begin{proof} \ Since $\mathcal D^b(k)$ has no exceptional $1$-cycles, one may assume that $\mathcal T\ncong \mathcal D^b(k).$

Let $E$ be an exceptional $1$-cycle in $\mathcal{T}$, which is a $d$-Calabi-Yau object.
By Lemma \ref{indspherical}, $E$ is indecomposable. Assume that $E$ is not at the mouth, i.e., the middle term of the Auslander-Reiten triangle ending at $E$ is either $0$, or of the form
$Z_1\oplus Z_2$ with $Z_1$ indecomposable and $Z_2\ne 0$. However, the first case is impossible, by Lemma \ref{rv}. Thus there is an Auslander-Reiten triangle
$$\tau E\xrightarrow{\binom{f_1}{f_2}} Z_1\oplus Z_2\xrightarrow{(g_1,-g_2)} E\xrightarrow{\sigma} S(E)$$
with $Z_1$ indecomposable and $Z_2\ne 0$. Then $g_1f_1: \tau E\longrightarrow E$ is not zero. In fact, if $d\ne 0$, then $\Hom_{\mathcal T}(E, E)\cong k$, and hence $g_1f_1\ne 0$, by Lemma \ref{gfnot0}$(1)$; if $d =0$, then $\Hom_\mathcal T(E, E)\cong k\oplus k$ and $S(E)\cong E$, and hence $g_1f_1\ne 0$, by Lemma \ref{gfnot0}$(2)$. It is clear that $g_1f_1$ is not an isomorphism (otherwise, $f_1$ is a splitting monomorphism, which contradicts that $f_1$ is irreducible). Now we divide into $2$ cases.

\vskip5pt

If $d = 1,$ then $\tau E \cong S(E)[-1]\cong E$ and $g_1f_1\in\Hom_{\mathcal T}(E, E)\cong k$. Since $g_1f_1$ and ${\rm Id}_E$ are linearly independent, this contradicts $\Hom_{\mathcal T}(E, E)\cong k$.

\vskip5pt

If $d\ne 1$, then $\tau E \cong S(E)[-1]\cong E[d-1]$. Since $1-d\ne 0$ and $1-d\ne d$,
one has $$\Hom_{\mathcal T}(\tau E, E)\cong
\Hom_{\mathcal T}(E[d-1], E)\cong \Hom_{\mathcal T}(E, E[1-d]) = 0$$ where the last equality follows from the definition of an exceptional $1$-cycle. Since $0\ne g_1f_1\in \Hom_{\mathcal T}(\tau E, E)
$, this contradicts $\Hom_{\mathcal T}(\tau E, E) = 0$.

This completes the proof. \end{proof}

\subsection{Proof of Theorem \ref{indsphericalmouth}}  $(1)$ \ This follows from Lemmas  \ref{indspherical} and \ref{sphericalmouth}.

\vskip5pt

$(2)$ \ Let $(E_1, \cdots, E_n)$ be an exceptional cycle in $K^b(A\mbox{-}{\rm proj})$ with $n\ge 3$. Since $\mathcal D^b(k)$ has no exceptional $n$-cycles with $n\ge 3$, one may assume that $\mathcal T\ncong \mathcal D^b(k).$ It suffices to show
that $E_1$ is at the mouth. Otherwise,  the middle term of the Auslander-Reiten triangle starting at $E_1$ is either $0$, or of the form
$Z_1\oplus Z_2$ with $Z_1$ indecomposable and $Z_2\ne 0$. However, the first case is impossible, by Lemma \ref{rv}. Thus there is an Auslander-Reiten triangle
$$E_1\xrightarrow{\binom{f_1}{f_2}} Z_1\oplus Z_2\xrightarrow{(g_1,-g_2)} \tau^{-1}E_1\longrightarrow E_1[1]$$ with $Z_1$ indecomposable and $Z_2\ne 0$.
By $(\rm E2)$, $S(E_n) = E_1[-t]$ for some integer $t$, and hence $\tau^{-1}E_1 \cong S^{-1}(E_1)[-1] = E_n[t-1]$. By Lemma \ref{gfnot0}$(1)$, one has $0\ne g_1f_1: E_1\rightarrow \tau^{-1}E_1$.
Thus  $\Hom(E_1, E_n[t-1])\ne 0$, so $\Hom^{\bullet}(E_1, E_n)\ne 0$. Since $n\ge 3$,  this contradicts the condition $(\rm E3)$. This completes the proof of $(2)$.
\hfill $\square$

 \subsection{} For later applications of Theorem \ref{indsphericalmouth}, 
 we include the following well-known fact.

\begin{Lem}\label{indec} Let $\Lambda$ be a finite-dimensional algebra. Then
$\Lambda$ is indecomposable as an algebra if and only if $K^b(\Lambda\mbox{-}{\rm proj})$ is indecomposable as a triangulated category.

If in addition $\Lambda$ is basic, then
$K^b(\Lambda\mbox{-}{\rm proj})\cong \mathcal D^b(k)$ if and only if $\Lambda = k$. \end{Lem}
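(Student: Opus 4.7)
For the first statement I would argue via the standard dictionary between block decompositions of $\Lambda$ and triangulated decompositions of $K^b(\Lambda\mbox{-}{\rm proj})$. If $\Lambda=\Lambda_1\times\Lambda_2$ as algebras, then $\Lambda\mbox{-}{\rm proj}=\Lambda_1\mbox{-}{\rm proj}\oplus\Lambda_2\mbox{-}{\rm proj}$ as additive categories with no nonzero morphisms between the two factors, so $K^b(\Lambda\mbox{-}{\rm proj})\cong K^b(\Lambda_1\mbox{-}{\rm proj})\times K^b(\Lambda_2\mbox{-}{\rm proj})$ as triangulated categories. Conversely, suppose $K^b(\Lambda\mbox{-}{\rm proj})=\mathcal T_1\times\mathcal T_2$. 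Decompose the object $\Lambda$ (viewed as a stalk complex in degree $0$) as $\Lambda\cong P^{(1)}\oplus P^{(2)}$ with $P^{(i)}\in\mathcal T_i$; write $1=e_1+e_2$ for the corresponding idempotent decomposition in $\End_{K^b}(\Lambda)\cong\Lambda^{\rm op}$. Since $\Hom_{K^b}(P^{(1)},P^{(2)})=0=\Hom_{K^b}(P^{(2)},P^{(1)})$, the two-sided multiplication by any $\lambda\in\Lambda$ is block diagonal, which forces $e_1\Lambda e_2=0=e_2\Lambda e_1$, i.e.\ $e_1,e_2$ are central. Hence $\Lambda\cong e_1\Lambda\times e_2\Lambda$.

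For the second statement the nontrivial direction is ``$\Rightarrow$''. Assume $\Phi:K^b(\Lambda\mbox{-}{\rm proj})\xrightarrow{\sim}\mathcal D^b(k)$ is a triangle-equivalence, and set $X:=\Phi(\Lambda)$. Every object of $\mathcal D^b(k)$ is a finite sum $X\cong\bigoplus_{i\in\Bbb Z}V_i[i]$ of shifts of finite-dimensional vector spaces, so
$$\Lambda\cong\End_{K^b}(\Lambda)^{\rm op}\cong\End_{\mathcal D^b(k)}(X)^{\rm op}\cong\prod_{i\in\Bbb Z}\End_k(V_i)^{\rm op}\cong\prod_i M_{\dim V_i}(k)$$
as $k$-algebras. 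Thus $\Lambda$ is semisimple; combined with the assumption that $\Lambda$ is basic, this gives $\Lambda\cong k^r$ for some $r\ge 1$.

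It remains to rule out $r\ge 2$. If $\Lambda\cong k^r$, the indecomposable projectives $P_1,\dots,P_r$ are the $r$ simple summands of $\Lambda$, pairwise non-isomorphic stalk complexes in degree $0$. Each $\Phi(P_i)$ is an indecomposable object of $\mathcal D^b(k)$, hence is isomorphic to $k[n_i]$ for some $n_i\in\Bbb Z$. If $r\ge 2$, applying $\Phi^{-1}$ to $k[n_2-n_1]\cdot k[n_1]=k[n_2]$ gives $P_1[n_2-n_1]\cong P_2$ in $K^b(\Lambda\mbox{-}{\rm proj})$. But $P_2$ is concentrated in degree $0$ while $P_1[n_2-n_1]$ is concentrated in degree $n_1-n_2$, so $n_1=n_2$; then $P_1\cong P_2$, contradicting that $\Lambda$ is basic. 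Hence $r=1$ and $\Lambda=k$. The converse is clear since $K^b(k\mbox{-}{\rm proj})=\mathcal D^b(k)$.

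The only mildly delicate step is the centrality argument in Part~1, where one must convert the triangulated orthogonality $\Hom_{K^b}(P^{(1)},P^{(2)})=0$ into the algebraic statement $e_1\Lambda e_2=0$; everything else is either formal or a direct invariant comparison via $\Phi$.
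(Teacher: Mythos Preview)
Your proof is correct. The first part is essentially the paper's argument, just phrased in terms of the single object $\Lambda$ and its central idempotents rather than sorting the indecomposable projectives individually into $\mathcal T_1$ and $\mathcal T_2$; the content is the same. One small point you could make explicit: you need $P^{(1)}$ and $P^{(2)}$ both nonzero to get a nontrivial algebra decomposition, which follows because $\Lambda$ generates $K^b(\Lambda\mbox{-}{\rm proj})$ as a thick subcategory, so it cannot lie entirely in one factor.

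For the second part your route differs from the paper's. The paper first invokes Part~1 to get that $\Lambda$ is indecomposable, then uses directly that $\mathcal D^b(k)$ has no nonzero morphisms between non-isomorphic indecomposables: this forces all indecomposable projectives to be pairwise isomorphic (else $\Lambda$ would decompose) with endomorphism ring $k$, hence $\Lambda=k$ by basicness. You instead compute $\End(\Phi(\Lambda))$ to see $\Lambda$ is semisimple, deduce $\Lambda\cong k^r$ from basicness, and then rule out $r\ge 2$ via a shift argument. Both work; the paper's version is a bit shorter and avoids the detour through semisimplicity, while yours gives the extra structural information that any $\Lambda$ with $K^b(\Lambda\mbox{-}{\rm proj})\cong\mathcal D^b(k)$ is a product of matrix algebras even without the basic hypothesis. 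Note also that once you have $\Lambda\cong k^r$ you could finish more quickly by appealing to Part~1: $k^r$ is a decomposable algebra for $r\ge 2$, so $K^b$ would decompose, contradicting $\mathcal D^b(k)$ being indecomposable.
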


\begin{proof} For convenience, we include a proof of the ``only if" parts. Assume that $K^b(\Lambda\mbox{-}{\rm proj})$ $\cong \mathcal T_1\times \mathcal T_2$ with $\mathcal T_1\ne 0$ and $\mathcal T_2\ne 0.$ Let $P = (P_1, P_2)$ be an indecomposable projective $\Lambda$-module with $P_i\in \mathcal T_i$. Since
$\End_\Lambda(P) = \End_{K^b(\Lambda\mbox{-}{\rm proj})}(P) = \End_{\mathcal T_1}(P_1)\times \End_{\mathcal T_2}(P_2)$ is a local algebra, it follows that $\End_\Lambda(P)$ is an indecomposable algebra. Thus
$P\in\mathcal T_1$ or $P\in\mathcal T_2$. Since all the indecomposable projective modules generate $K^b(\Lambda\mbox{-}{\rm proj})$, it follows that both $\mathcal T_1$ and $\mathcal T_2$ contains
at least one indecomposable projective module. Thus $A$ is not indecomposable.

\vskip5pt

Suppose that $\Lambda$ is basic. If $K^b(\Lambda\mbox{-}{\rm proj})\cong \mathcal D^b(k)$, then $\Lambda$ is indecomposable. Since in $\mathcal D^b(k)$ there are no nonzero morphism between two non-isomorphic indecomposable objects, $\Lambda$ has only one isoclass of indecomposable projective module $P$ with $\End(P) = k$. Since $\Lambda$ is basic, $\Lambda = P$ and $\Lambda \cong \End(P) = k$. \end{proof}

\section{\bf Characteristic components of $K^b(A\mbox{-}{\rm proj})$}

Let $A$ be an indecomposable finite-dimensional gentle $k$-algebra.

\subsection{The shape of a characteristic component}

Recall that a connected component of the Auslander-Reiten quiver of $K^b(A\mbox{-}{\rm proj})$ is
{\it a characteristic component}, if it contains a string complex at the mouth.
To get the shape of a characteristic component, we need the following result due to S. Scherotzke.

\begin{Lem}\label{for2cycles} \ {\rm ([Sch, Theorem 4.14, Corollary 3.4]; [V, Theorem])} \  Let $\Lambda$ be a finite-dimensional algebra. If
the Auslander-Reiten quiver of $\mathcal D^b(\Lambda)$ has a component $\Bbb Z\vec\Delta/G$,
where $\Delta$ a Dynkin graph, and $G$ is an admissible automorphism group of $\mathbb{Z}\vec\bigtriangleup$, then
$\mathcal D^b(\Lambda)$ is of finite type $($i.e., it has only finitely many isoclasses of indecomposable objects, up to shift$)$, and $K^b(\Lambda\mbox{-}{\rm proj})=\mathcal D^b(\Lambda)\cong \mathcal D^b(k\vec\bigtriangleup)$.
\end{Lem}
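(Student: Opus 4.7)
The plan is to establish the conclusion in two stages: first reducing to finite representation type, then invoking Happel's classification. The key input is the general structure theorem for Auslander--Reiten components of Hom-finite Krull--Schmidt triangulated categories with Serre functor (due to Happel, Xiao--Zhu, and Liu in the triangulated setting): every connected AR-component is of the form $\mathbb{Z}Q/H$ for some quiver $Q$ without oriented cycles and some admissible automorphism group $H$. Under this theorem the Dynkin shape of one component is a rigid geometric invariant that we can leverage.

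\emph{Stage 1 --- finite representation type.} I would show that the existence of one Dynkin component $C = \mathbb{Z}\vec{\Delta}/G$ forces $\mathcal{D}^b(\Lambda)$ to contain only finitely many isoclasses of indecomposables up to shift. The shift $[1]$ commutes with $\tau$ and therefore acts on the set of AR-components and on each as a translation-quiver automorphism; since $\vec{\Delta}$ is Dynkin, $C$ has finitely many $\tau$-orbits, so the $[1]$-orbit of $C$ contributes only finitely many indecomposable isoclasses modulo shift. Next I would argue that $C$ must absorb every other AR-component: if an indecomposable $X$ lay outside the $[1]$-orbit of $C$, then Hom-finiteness together with Serre duality and the Riedtmann-style analysis of how morphisms propagate across AR-triangles (this is precisely the content of Scherotzke's Theorem~4.14) would give either a direct-sum decomposition of $\mathcal{D}^b(\Lambda)$ as a triangulated category contradicting that indecomposable projectives live in one block, or an irreducible map connecting the component of $X$ to $C$, which by the Dynkin rigidity of $C$'s mouths is impossible unless $X$ already belongs to $C$. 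Since indecomposable projectives generate $\mathcal{D}^b(\Lambda)$ under shifts, cones, and direct summands, this forces $\mathcal{D}^b(\Lambda)$ to be of finite type and shows along the way that $\Lambda$ has finite global dimension (hence $K^b(\Lambda\mbox{-}\mathrm{proj})=\mathcal{D}^b(\Lambda)$).

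\emph{Stage 2 --- identification with $\mathcal{D}^b(k\vec{\Delta})$.} Given finite representation type, Happel's classification identifies $\mathcal{D}^b(\Lambda)$ as triangle-equivalent to $\mathcal{D}^b(k\vec{\Delta'})$ for some Dynkin diagram $\Delta'$. The AR-quiver of $\mathcal{D}^b(k\vec{\Delta'})$ is $\mathbb{Z}\vec{\Delta'}$ with trivial admissible quotient, since the shift acts as a translation-quiver automorphism within the single connected AR-component and does not identify $\tau$-orbits. Comparing this with the hypothesis that our component is $\mathbb{Z}\vec{\Delta}/G$ pins down $\vec{\Delta'} = \vec{\Delta}$ and forces $G = 1$. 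Because $k\vec{\Delta}$ is hereditary, $K^b(k\vec{\Delta}\mbox{-}\mathrm{proj}) = \mathcal{D}^b(k\vec{\Delta})$, and transporting the equality along the derived equivalence gives $K^b(\Lambda\mbox{-}\mathrm{proj}) = \mathcal{D}^b(\Lambda)$, completing the proof.

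The principal obstacle is Stage~1: turning the local Dynkin shape of a single AR-component into a global finiteness statement, without circular reasoning, requires a delicate analysis of how $[1]$ permutes components and how irreducible morphisms can cross component boundaries. This is exactly where the work of Scherotzke (and, in the derived-discrete setting, Vossieck) does the heavy lifting, so any detailed write-up would essentially reconstruct their arguments rather than bypass them.
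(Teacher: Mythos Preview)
The paper does not give its own proof of this lemma; it is stated as a citation of Scherotzke [Sch, Theorem~4.14, Corollary~3.4] and Vossieck [V, Theorem], followed only by the remark that $\mathcal{D}^b(\Lambda)$ is \emph{not} assumed to have Auslander--Reiten triangles (equivalently, $\Lambda$ is not assumed to have finite global dimension). Your outline is therefore not competing with an argument in the paper but is a sketch of what lies behind the cited references, and you correctly identify Stage~1 as the place where Scherotzke's work carries the load.

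That said, there is a concrete gap in your Stage~1. You invoke ``Hom-finiteness together with Serre duality'' on $\mathcal{D}^b(\Lambda)$, but a global Serre functor on $\mathcal{D}^b(\Lambda)$ exists only when $\Lambda$ has finite global dimension, which is part of the \emph{conclusion} you are trying to reach, not a hypothesis. This is precisely why the paper singles out the point in its remark after the lemma. Scherotzke's actual argument cannot and does not assume Serre duality on all of $\mathcal{D}^b(\Lambda)$; one has to work with the AR-triangles that are available (those ending in $K^b(\Lambda\mbox{-}\mathrm{proj})$ and starting in $K^b(\Lambda\mbox{-}\mathrm{inj})$) and transfer information carefully between $K^b(\Lambda\mbox{-}\mathrm{proj})$ and $\mathcal{D}^b(\Lambda)$. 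As written, your sketch is circular at this step.

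A smaller issue in Stage~2: the passage from ``$\mathcal{D}^b(\Lambda)$ has finite type'' to ``$\mathcal{D}^b(\Lambda)\cong\mathcal{D}^b(k\vec{\Delta'})$'' is not what is usually called Happel's classification. This implication is closer to Scherotzke's Corollary~3.4 (or can be extracted from Vossieck's classification of derived-discrete algebras); Happel's original results describe $\mathcal{D}^b(kQ)$ for hereditary $kQ$ rather than characterise which $\Lambda$ have finite derived type. Your final matching $\Delta'=\Delta$ and $G=1$ is fine.
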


Note that in Lemma \ref{for2cycles}, $\mathcal D^b(\Lambda)$ is not assumed to have Auslander-Reiten triangles (or equivalently, the global dimension of $\Lambda$ is finite. See [H3]).

\begin{Prop}\label{formofcomponent} Let $A$ be an indecomposable finite-dimensional gentle algebra. Then
a characteristic component of $K^b(A\mbox{-}{\rm proj})$ is one of the following$:$
$$\Bbb Z\vec{A_n} \ (n\ge 2), \ \ \ \ \Bbb Z\vec{A_\infty}, \ \ \ \ \Bbb Z\vec{A_\infty}/\langle \tau^n\rangle \ (n\ge 1).$$
\end{Prop}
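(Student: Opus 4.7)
The plan is to combine the strong combinatorial constraint on middle terms of Auslander-Reiten triangles supplied by Theorem \ref{2terms} with the Riedtmann-type structure theorems cited from [XZ] and [BR], and then to use Lemma \ref{for2cycles} to rule out the unwanted quotients. Throughout, I write $C$ for the given characteristic component, which by definition consists of string complexes and contains a string complex at the mouth.

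First I would observe the ``width constraint'': since every object of $C$ is a string complex, Theorem \ref{2terms}$(1)$ shows that the middle term of each Auslander-Reiten triangle inside $C$ is the direct sum of at most two indecomposable string complexes. Moreover, by Lemma \ref{indec} the category $K^b(A\mbox{-}{\rm proj})$ is an indecomposable $k$-linear Hom-finite Krull-Schmidt triangulated category with Serre functor, and it is not equivalent to $\mathcal D^b(k)$ because $A\ne k$; hence Lemma \ref{rv} forbids middle terms equal to zero. So every middle term in $C$ has exactly one or two indecomposable summands.

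Next I would invoke the Riedtmann-type classification, in the form developed in [XZ] and [BR] for triangulated categories: an Auslander-Reiten component whose middle terms have width at most two is a translation quiver of the form $\Bbb Z\vec{\Delta}/G$, where $\vec{\Delta}$ is an oriented tree of valence at most two and $G$ is an admissible group of automorphisms. The valence restriction forces $\Delta\in\{A_n\,(n\ge 2),\ A_\infty,\ A_\infty^\infty\}$. Since $C$ contains a mouth vertex (whose Auslander-Reiten triangle has a single indecomposable middle summand), $\Delta$ must possess a leaf, and this excludes $A_\infty^\infty$. Therefore $\Delta$ is either $A_n$ with $n\ge 2$ or $A_\infty$.

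Finally I would enumerate the possible admissible groups $G$. If $\Delta=A_n$, then $G$ is either trivial or of the form $\langle \tau^m\rangle$ (producing a Dynkin tube). In the latter case Lemma \ref{for2cycles} applies and yields $K^b(A\mbox{-}{\rm proj})=\mathcal D^b(A)\cong \mathcal D^b(k\vec{A_n})$; but the Auslander-Reiten quiver of $\mathcal D^b(k\vec{A_n})$ is $\Bbb Z A_n$ and contains no tubes, a contradiction. So $G$ is trivial and $C\cong \Bbb Z\vec{A_n}$. If $\Delta=A_\infty$, the admissible groups are $\{e\}$ and $\langle \tau^n\rangle$ for some $n\ge 1$, producing $\Bbb Z\vec{A_\infty}$ or $\Bbb Z\vec{A_\infty}/\langle \tau^n\rangle$ respectively; both are permitted. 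The main obstacle I anticipate is the careful invocation of the Riedtmann structure theorem in the triangulated setting and the enumeration of admissible groups, but the decisive step — the exclusion of Dynkin tubes — is immediate once Lemma \ref{for2cycles} is in place.
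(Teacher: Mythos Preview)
Your proposal is correct and follows essentially the same route as the paper. The paper handles a few technical details more explicitly: it first invokes [XZ] to exclude loops before applying the Butler--Ringel classification; its list from [BR] also contains $\widetilde{A}_{1,2}$ (the valued case arising from possible multiple arrows), which is then excluded, like $A_\infty^\infty$, for lack of a mouth vertex; and it notes that an Auslander--Reiten component of $K^b(A\mbox{-}{\rm proj})$ is also one of $\mathcal D^b(A)$, which is needed to invoke Lemma \ref{for2cycles} as stated.
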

\begin{proof} Let $C$ be a characteristic component of $K^b(A\mbox{-}{\rm proj})$. Then $C$ contains no loops, by J. Xiao and B. Zhu [XZ, Corollary 2.2.3] (i.e., for a $k$-linear Hom-finite indecomposable triangulated category $\mathcal T$ with Serre functor,
if the Auslander-Reiten quiver of $\mathcal T$ contains a loop, then $X\cong X[1]$ for any object $X\in\mathcal T$).
One may assume that
$C$ has no multiple arrows (otherwise, regarding $C$ as a valued quiver so that it has no multiple arrows). Thus, $C$ is a valued stable translation quiver without loops and multiple arrows.
By Theorem \ref{2terms}, $\alpha(x)\le 2$ for each vertex $x\in C$, where $\alpha(x)$ is
the number of indecomposable direct summands of the middle term in the Auslander-Reiten triangle starting from $x$.
By M. C. R. Butler and C. M. Ringel [BR, p.154] (see also [Rm] and [HPR]),
one has $C=\mathbb{Z}\vec\bigtriangleup/G$, where the underlying graph $\bigtriangleup$ is of type
$$\xymatrix@R=0.2cm@C=0.6cm{A_n \ (n \ge 2): & 1\ar@{-}[r] & 2 \ar@{-}[r] & \cdots \ar@{-}[r] & n &&&
A_{\infty}: & \circ \ar@{-}[r] & \circ \ar@{-}[r]& \circ \ar@{-}[r] & \cdots }$$
$$\xymatrix@R=0.2cm@C=0.6cm{\widetilde{A_{1, 2}}: & \circ \ar@{-}[r]^-{(2, 2)} & \circ &&&&& & A_{\infty}^\infty:
& \cdots \ar@{-}[r] & \circ \ar@{-}[r] & \circ \ar@{-}[r]& \circ \ar@{-}[r] & \cdots}$$
and $G$ is an admissible automorphism group of $\mathbb{Z}\vec\bigtriangleup$. (Since $k$ is algebraically closed, the cases of $C_n, \widetilde{A}_{11}, \widetilde{C}_n, C_\infty$ can not occur).

\vskip5pt

If $\bigtriangleup= A_n$, then $K^b(A\mbox{-}{\rm proj})$ has an Auslander-Reiten component
$C=\mathbb{Z}\vec{A_n}/G$. By the definition of a Auslander-Reiten triangle, one easily check that
an Auslander-Reiten component of $K^b(A\mbox{-}{\rm proj})$ is an
Auslander-Reiten component of $\mathcal D^b(A) = K^{-, b}(A\mbox{-}{\rm proj}).$
Then by  Lemma \ref{for2cycles},  $K^b(A\mbox{-}{\rm proj}) = \mathcal D^b(A)\cong \mathcal D^b(k\vec{A_n})$ and $G = \{1\}$.

\vskip5pt

If $\bigtriangleup=A_{\infty}$, then $G=\{1\}$ or $G=\langle\tau^m\rangle$ for some $m\geq 1$. See [BR, p. 154].

\vskip5pt

Note that $\bigtriangleup$ can not be $\widetilde{A_{1, 2}}$ or $A_{\infty}^\infty$: otherwise $C$ contains no objects at the mouth.

\vskip5pt

This completes the proof. \end{proof}

\subsection{Number of characteristic components} Let $A = kQ/I$ be a finite-dimensional gentle algebra with $Q$ a finite  connected quiver. Since $Q$ is a finite quiver, by definition there are only finitely many forbidden paths, and hence only finitely many forbidden threads.
By Lemma \ref{mouth}, any string complex at the mouth is of the form $P_{m, w}$, where $w$ is a forbidden thread and $m$ is an integer.
Since $P_{m', w}$ is a shift of $P_{m, w}$ for different integers $m$ and $m'$, up to shift, there are only finitely many string complexes at
the mouths.  This shows

\vskip5pt

\begin{Lem} \label{finitecharcomponent} Let $A$ be an indecomposable finite-dimensional gentle algebra. Then there are only finitely many string complexes at
the mouth, and $K^b(A\mbox{-}{\rm proj})$ has only finitely many characteristic components, up to shift.\end{Lem}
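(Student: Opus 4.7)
The plan is to reduce both statements to the finiteness of the set $\mathcal{F}_A$ of forbidden threads, and then invoke Lemma \ref{mouth} together with the fact that $P_{m',w}$ is a shift of $P_{m,w}$.

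First I would verify that $\mathcal{F}_A$ is finite. The trivial forbidden threads sit inside $Q_0$, which is finite by assumption. A non-trivial forbidden thread is, by the definition recalled in Subsection 2.4, a forbidden path $p=\alpha_n\cdots\alpha_1$ whose arrows $\alpha_1,\dots,\alpha_n$ are pairwise distinct. Hence $n\le|Q_1|$, and since there are only finitely many ordered tuples of pairwise distinct arrows, the total count of such paths is finite. Therefore $|\mathcal{F}_A|<\infty$.

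Next, by Lemma \ref{mouth}, every string complex at the mouth is of the form $P_{m,w}$ with $w\in\mathcal{F}_A$ and $m\in\mathbb{Z}$. Because $P_{m',w}$ is merely a shift of $P_{m,w}$ (noted in Subsection 2.7), each $w\in\mathcal{F}_A$ contributes exactly one isoclass of string complex at the mouth, up to shift. Hence, modulo shift, the string complexes at the mouth form a set of cardinality at most $|\mathcal{F}_A|$, in particular finite.

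Finally, by the very definition of a characteristic component, every such component $C$ contains some string complex $P_{m,w}$ at the mouth, and the component containing $P_{m',w}$ is the shift $C[m'-m]$. Thus assigning to each shift-class of characteristic components a shift-class of string complexes at the mouth it contains is well defined, and a characteristic component is determined by (one of) its mouth objects together with the position in the $\tau$-orbit; in particular two characteristic components agreeing up to shift at the mouth agree up to shift. The finiteness of the set of mouth string complexes up to shift, established in the previous paragraph, then yields the second assertion. The only genuine point to be careful about is the bookkeeping of the ``up to shift'' equivalence on both sides; apart from this the argument is an immediate consequence of Lemma \ref{mouth} and the length bound $n\le|Q_1|$ on forbidden paths.
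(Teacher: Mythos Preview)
Your proposal is correct and follows essentially the same approach as the paper: both arguments reduce to the finiteness of $\mathcal{F}_A$, then invoke Lemma~\ref{mouth} and the fact that $P_{m',w}$ is a shift of $P_{m,w}$. You are somewhat more explicit than the paper in two places---you spell out the bound $n\le |Q_1|$ coming from the pairwise distinctness of arrows in a forbidden path, and you justify the passage from mouth objects to characteristic components---whereas the paper simply asserts that a finite quiver has finitely many forbidden paths and leaves the second assertion implicit; but these are elaborations of the same argument, not a different route.
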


\subsection {The AG-invariant of a characteristic component} Let $C$ be a characteristic component of $K^b(A\mbox{-}{\rm proj})$.
For a string complex $X$ at the mouth of $C$, $\tau^l X$ is again a string complex at the mouth for $l\in\Bbb Z$.
Since up to shift there are only finitely many string complexes at
the mouth, there exists a minimal positive integers $n_X$ such that $$\tau^{n_X}X\cong X[m_X-n_X]$$ for some integer
$m_X$. Since $K^b(A\mbox{-}{\rm proj})$ is homotopy-like, such an integer $m_X$ is unique.
We will prove $(n_X, m_X) = (n_Y, m_Y)$ for any  two objects $X$ and $Y$ at the mouth of $C$. For this we need

\begin{fact} \label{an} Let $A$ be an indecomposable finite-dimensional gentle algebra.
Assume that $K^b(A\mbox{-}{\rm proj})$ has a characteristic component $C=\Bbb Z\vec{A_n}$ with $n\ge 2$.
Then $K^b(A\mbox{-}{\rm proj})= \mathcal D^b(A)\cong \mathcal D^b(k\vec{A_n})$,
and for any objects $X$ and $Y$ at the mouth of $C$,
one has $Y = \tau^t X[s]$ for some integers $t$ and $s$.\end{fact}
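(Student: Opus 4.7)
The plan splits along the two assertions. For the equivalence, the characteristic component $C=\mathbb Z\vec{A_n}$ with $n\ge 2$ is of the form $\mathbb Z\vec{\Delta}/G$ with $\Delta=A_n$ Dynkin and $G=\{1\}$; Lemma \ref{for2cycles} then directly gives $K^b(A\mbox{-}{\rm proj})=\mathcal D^b(A)\cong \mathcal D^b(k\vec{A_n})$. Because $k\vec{A_n}$ is a connected hereditary Dynkin algebra, $\mathcal D^b(k\vec{A_n})$ has a single AR component, forced to be the image of $C$. So the mouth claim reduces to a statement about the unique AR component of $\mathcal D^b(k\vec{A_n})$, where I may fix any orientation of $A_n$, say the linear one $1\to 2\to\cdots\to n$, since all orientations of a Dynkin graph yield derived-equivalent path algebras.

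For the mouth claim I use the Serre/Nakayama functor $\nu=S=\tau[1]$. It permutes the (finitely many) mouth iso-classes in $C$ modulo shift, and the plan is to show this permutation is a single cycle. A direct projective-resolution computation in the linear orientation gives
$$\nu S(i)\cong S(i+1)[1]\ (1\le i<n),\quad \nu S(n)\cong M[1,n],\quad \nu M[1,n]\cong S(1),$$
where $M[1,n]=P(1)=I(n)$, so the $\nu$-orbit modulo shift of $S(1)$ is the $(n+1)$-cycle $S(1)\to S(2)\to\cdots\to S(n)\to M[1,n]\to S(1)$. Happel's AR-triangle formula independently confirms that $\{S(1),\ldots,S(n),M[1,n]\}$ is exactly the set of mouth iso-classes of $C$. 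Thus for any two mouths $X,Y\in C$ one obtains $Y\cong\nu^k X[s']=\tau^k X[k+s']$ for some integers $k,s'$, which is the desired $Y\cong\tau^t X[s]$.

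The main obstacle is the explicit identification of the mouth iso-classes together with the Nakayama computation. A more structural fallback uses the action of $[1]$ on the translation quiver $\mathbb Z\vec{A_n}$: it must take the form $(i,k)\mapsto(i+c(k),g(k))$ with $g\in\operatorname{Aut}(A_n)=\{\operatorname{id},\pi\}$ for $n\ge 2$, where $\pi$ is the flip $k\mapsto n+1-k$. The case $g=\operatorname{id}$ is excluded by combining the Coxeter-type relation $\tau^{n+1}=[-2]$ in $\mathcal D^b(k\vec{A_n})$ with the vanishing of $\Hom(S(1),S(n)[\ell])$ for all $\ell$ when $n\ge 3$, together with a direct verification for $n=2$. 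Hence $[1]$ swaps the two mouth $\tau$-orbits of $C$, and $Y\cong\tau^t X[s]$ follows with $s\in\{0,1\}$.
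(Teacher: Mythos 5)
Your argument is correct and follows essentially the same route as the paper: reduce to $\mathcal D^b(k\vec{A_n})$ via Lemma \ref{for2cycles}, then use the Serre/Nakayama functor to connect the two $\tau$-orbits forming the mouth (your computation $\nu S(n)\cong M[1,n]=P(1)=I(n)$ is exactly the paper's step $\tau P(n)=S(P(n))[-1]=I(n)[-1]$, just carried around the full $(n+1)$-cycle). The structural ``fallback'' in your last paragraph is not needed and is the only sketchy part (excluding $g=\operatorname{id}$ from $\tau^{n+1}=[-2]$ alone fails for odd $n$, and the Hom-vanishing step is not spelled out), but since your main Nakayama computation is complete, the proof stands.
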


\begin{proof} Note that $C$ is also an Auslander-Reiten component of $D^b(A)$.
By Lemma \ref{for2cycles}, $K^b(A\mbox{-}{\rm proj})= \mathcal D^b(A)\cong \mathcal D^b(k\vec{A_n})$.

\vskip5pt

Note that the set of indecomposable objects at the mouth of $C$ is the disjoint union of two $\tau$-orbits of $C$, here $\tau$ is the Auslander-Reiten of $D^b(A)$.
Labelling $\vec{A_n}$ as $1\rightarrow 2 \rightarrow \cdots \rightarrow n$,
the indecomposable projective $k\vec{A_n}$-module $P(n)$ and the indecomposable injective $k\vec{A_n}$-module $I(1)$ are at the lower $\tau$-orbit
with $\tau^{n-1}I(1) = P(n)$;
and the indecomposable injective $k\vec{A_n}$-module $I(n)=P(1)$ is at the upper $\tau$-orbit. Then
$\tau P(n) = S(P(n))[-1] = I(n)[-1]$ and $\tau P(1) = S(P(1))[-1] = I(1)[-1] = \tau^{-(n-1)}P(n)[-1].$
Thus $Y = \tau^t X[s]$ for some integers $t$ and $s$. \end{proof}

\begin{Prop} \label{ainfinte} Let $A$ be an indecomposable finite-dimensional gentle algebra, and $C$ a characteristic component of $K^b(A\mbox{-}{\rm proj})$.
Then $(n_X, m_X) = (n_Y, m_Y)$ for any objects $X$ and $Y$ at the mouth of $C$.\end{Prop}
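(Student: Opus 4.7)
The plan is to perform a case analysis on the shape of $C$ provided by Proposition \ref{formofcomponent}, namely $C$ is one of $\Bbb Z\vec{A_n}$ ($n\ge 2$), $\Bbb Z\vec{A_\infty}$, or $\Bbb Z\vec{A_\infty}/\langle \tau^n\rangle$. In every case I will reduce to showing that for any two mouth objects $X,Y$ of $C$ there are integers $t,s$ with $Y\cong \tau^{t}X[s]$; once this is in hand, the equality $(n_X,m_X)=(n_Y,m_Y)$ will follow by a short abstract argument that only invokes the minimality defining $n_X$ and the homotopy-like hypothesis.

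For $C\cong \Bbb Z\vec{A_\infty}$ or $\Bbb Z\vec{A_\infty}/\langle \tau^n\rangle$, the mouth of the translation quiver is a single $\tau$-orbit, so automatically $Y\cong \tau^{t}X$ for some $t\in\Bbb Z$. For $C=\Bbb Z\vec{A_n}$ with $n\ge 2$ the mouth splits into two $\tau$-orbits, and here I invoke Fact \ref{an}: it identifies $K^b(A\mbox{-}{\rm proj})$ with $\mathcal D^b(k\vec{A_n})$ and, as computed in its proof, produces the explicit relation $\tau P(n)\cong P(1)[-1]$. This relation crosses the two mouth orbits via a shift, and combining it with the $\tau$-action within each orbit yields $Y\cong \tau^{t}X[s]$ once again.

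With the reduction in place, I substitute $Y\cong \tau^{t}X[s]$ into $\tau^{n_Y}Y\cong Y[m_Y-n_Y]$ to deduce $\tau^{n_Y}X\cong X[m_Y-n_Y]$. Writing $n_Y=qn_X+r$ with $0\le r<n_X$ and combining with $\tau^{n_X}X\cong X[m_X-n_X]$ gives $\tau^{r}X\cong X[m_Y-n_Y-q(m_X-n_X)]$; the minimality of $n_X$ then forces $r=0$, so $n_X\mid n_Y$. Interchanging the roles of $X$ and $Y$ (using $X\cong \tau^{-t}Y[-s]$) yields $n_Y\mid n_X$, hence $n_X=n_Y$. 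The homotopy-like hypothesis, applied to the indecomposable object $X$, then upgrades $X[m_X-n_X]\cong X[m_Y-n_Y]$ to $m_X-n_X=m_Y-n_Y$, and therefore $m_X=m_Y$.

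The only delicate point is the case $C=\Bbb Z\vec{A_n}$: the two $\tau$-orbits at the mouth are genuinely distinct as orbits of the translation quiver, so without the shift identification provided by Fact \ref{an} one cannot directly compare $(n_X,m_X)$ across them. The remaining cases and the concluding abstract argument are essentially routine orbit bookkeeping.
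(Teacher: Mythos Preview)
Your proposal is correct and follows essentially the same approach as the paper: the case split via Proposition~\ref{formofcomponent}, the reduction to $Y\cong \tau^{t}X[s]$ (invoking Fact~\ref{an} for the $\Bbb Z\vec{A_n}$ case), and then a minimality argument. The only cosmetic difference is that the paper argues $n_Y\le n_X$ and $n_X\le n_Y$ directly (by applying $\tau^{n_X}$ to $Y$ and reading off $\tau^{n_X}Y\cong Y[m_X-n_X]$), rather than passing through the division algorithm to obtain $n_X\mid n_Y$; this is a bit shorter but logically equivalent to what you wrote.
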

\begin{proof} By Proposition \ref{formofcomponent}, $C$ is either $\Bbb Z\vec{A_n}$ with $n\ge 2$, or $\Bbb Z\vec{A_{\infty}}$, or $\Bbb Z\vec{A_{\infty}}/\langle \tau^n\rangle$ with $n\ge  1$.
If $C=\Bbb Z\vec{A_n}$ with $n\ge 2$, then $Y = \tau^t X[s]$ for some integers $t$ and $s$, by Lemma \ref{an}$(2)$. Thus
$$\tau^{n_X}Y = \tau^t \tau^{n_X} X[s] \cong \tau^t X[m_X-n_X][s] = Y[m_X-n_X].$$
Then $n_Y\le n_X$, by the minimality of $n_Y$. Similarly, $n_X\le n_Y$. Thus $n_X=n_Y$, and then $m_X = m_Y$, by the uniqueness of $m_Y$. If $C$ is either $\Bbb Z\vec{A_{\infty}}$ or $\Bbb Z\vec{A_{\infty}}/\langle \tau^n\rangle$ with $n\ge  1$, then $Y = \tau^t X$ for some integer $t$. By the same argument
one has $(n_X, m_X) = (n_Y, m_Y)$.
\end{proof}

By Proposition \ref{ainfinte}, for each characteristic component $C$ of $K^b(A\mbox{-}{\rm proj})$, there exists a unique pair $(n, m)$ of integers, so that $n$ is the minimal positive integer such that
$\tau^{n}X\cong X[m-n]$, for any indecomposable object $X$ at the mouth of $C$.
We will call $(n, m)$ the {\rm AG}-invariant of $C$. 

\subsection{The height function of a characteristic component} According to Proposition \ref{formofcomponent}, for each characteristic component $C$ of $K^b(A\mbox{-}{\rm proj})$,  there is a well-defined height function $h: C \rightarrow \Bbb N$ such that $h(X) = 1$ if and only if $X$ is at the mouth.
Thus in Theorem \ref{2terms}$(1)$, if $h(P_{m,w}) = n$ then $h(P_{m+m'(w), w_{+}}) = n-1$, $h(P_{m, w^{+}}) = n+1$, and $h(P_{m+m''(w), w^{+}_{+}}) = n$.

\section{\bf Proof of Theorem \ref{morphismbetweenforbiddenthreads} and Corollary \ref{orthogonal}}

A key observation for proving Theorem \ref{morphismbetweenforbiddenthreads} is Lemma \ref{vw} below.
One of the tools in the proof of Lemma \ref{vw} is a description of
morphisms between string complexes in the bounded complex category $C^b(A\mbox{-}{\rm proj})$ of $A\mbox{-}{\rm proj}$,
via single maps, double maps, and graph maps, introduced in \cite{ALP}.

\subsection{Morphisms between some indecomposable objects of $K^b(A\mbox{-}{\rm proj})$}
Let $A$ be a gentle algebra.
We use {\bf the  conventions:} We denote by $P_w$ the string complex $P_{m, w}$ (if we do not need to specify $m$); and if $\mu$ is an automorphism of a $1$-dimensional vector space, we also denote by $P_w$ the band complex $P_{m,w, \mu}$ (if no confusions caused. In fact, to study exceptional cycles, we only need to consider this special case of $\mu$).
In this way some results can be stated for both string complexes and band complexes for $\mu\in k^*$.

\vskip5pt

We use the unfolded diagram in \cite{ALP} to express $P_w$.
This presentation is based on properties of gentle algebras. We write a right multiplication $r_p$ by a path $p$ simply as $p$.
Thus, if $f = r_p$ and $g = r_q$ then the composition $gf = r_{pq}$ (sometimes $gf$ is written as $pq$. In this way $gf$ is written $fg$ below). In the following $v$ and $w$ are homotopy strings or homotopy bands.

\vskip5pt

{\bf Single maps.} \ Suppose that we are given a chain map between unfolded diagrams
$$\xymatrix@R=0.5cm{
P_v:    & \cdots  \bullet \ar@{-}[r]^{v_L}  &   \bullet \ar@{-}[r]^{v_R} \ar[d]^f  &  \bullet  \cdots &\\
P_w:    & \cdots \bullet \ar@{-}[r]^{w_L}  &   \bullet \ar@{-}[r]^{w_R}   &  \bullet  \cdots &
}$$
with permitted path $f$. A chain map in $\Hom_{C^b(A\mbox{-}{\rm proj})}(P_v,P_w)$ is {\it a single map} if it has only one non-zero component $f$, and
$f$ satisfies the following conditions ([ALP, 3.1]):

\text{\rm (L)} \ If $v_L$ is direct, then $v_Lf=0$; and if $w_L$ is inverse, then $fw_L=0$.

\text{\rm (R)} \ If $v_R$ is inverse, then $v_Rf=0$; and if $w_R$ is direct, then $fw_R=0.$

\noindent Denote the set of single maps $P_v\rightarrow P_w$ by $S_{v,w}$.

\vskip5pt

{\bf Double maps.} \ Suppose that we are given a chain map between unfolded diagrams:
$$\xymatrix@R=0.5cm{
P_v:    & \cdots   \bullet \ar@{-}[r]^{v_L}  &  \bullet \ar[r]^{v_M} \ar[d]_{f_L} & \bullet \ar@{-}[r]^{v_R} \ar[d]^{f_R}  &  \bullet  \cdots &\\
P_w:    & \cdots   \bullet \ar@{-}[r]^{w_L}  &   \bullet \ar[r]^{w_M}  &\bullet \ar@{-}[r]^{w_R}   &  \bullet  \cdots &
}$$
with permitted paths $f_L$ and $f_R$, such that $v_Mf_R=f_Lw_M\neq0$. A map in $\Hom_{C^b(A\mbox{-}{\rm proj})}(P_v,P_w)$ is {\it a double map} if it has only two consecutive non-zero components
$f_L$ and $f_R$, such that $f_L$ satisfies \text{\rm (L)} and $f_R$ satisfies \text{\rm (R)}. See [ALP, 3.3]. Write $D_{v,w}$ for the set of double maps $P_v\rightarrow P_w$.

\vskip5pt

{\bf Graph maps.} \ Suppose that we are given a chain map between unfolded diagrams
$$\xymatrix@R=0.6cm{
P_v:    & \cdots  \bullet \ar@{-}[r]^{v_L} \ar@<1.5ex>@{-->}[d]_{f_L} &  \bullet \ar@{-}[r]^{u_p} \ar@{=}[d]^{\rm Id} &
\bullet \ar@{-}[r]^{u_{p-1}} \ar@{=}[d]^{\rm Id} &\cdots  \ar@{-}[r]^{u_2} &\bullet \ar@{-}[r]^{u_1} \ar@{=}[d]^{\rm Id} & \bullet \ar@{-}[r]^{v_R}\ar@{=}[d]^{\rm Id} &  \bullet \cdots\ar@<-1.5ex>@{-->}[d]^{f_R}  &\\
P_w:    & \cdots  \bullet \ar@{-}[r]^{w_L}  &  \bullet \ar@{-}[r]^{u_p}  &
\bullet \ar@{-}[r]^{u_{p-1}}  &\cdots  \ar@{-}[r]^{u_2} &\bullet \ar@{-}[r]^{u_1}  & \bullet \ar@{-}[r]^{w_R}   &  \bullet \cdots &\\
}$$
where $v_L\neq w_L$ and $v_R\neq w_R$, $f_L$ is either a permitted path or zero,  and $f_R$ is either a permitted path or zero.
A map in $\Hom_{C^b(A\mbox{-}{\rm proj})}(P_v,P_w)$ is {\it a graph map} if one of the following conditions \text{\rm (LG1)} and \text{\rm (LG2)} holds, and one of \text{\rm (RG1)} and \text{\rm (RG2)} holds, where

\vskip5pt

\noindent\text{\rm (LG1)} \ The arrows $v_L$ and $w_L$ are either both direct or both inverse. In this case, there exists some (scalar multiple of a) permitted path $f_L$ such that the square on the left commutes.\\
\text{\rm (LG2)} \ The arrows $v_L$ and $w_L$ are neither both direct nor both inverse. In this case, if $v_L$ is non-zero then it is inverse, and if $w_L$ is non-zero then it is direct. \\
\text{\rm (RG1)} \ The arrows $v_R$ and $w_R$ are either both direct or both inverse. In this case, there exists some (scalar multiple of a) permitted path $f_R$ such that the square on the right commutes.\\
\text{\rm (RG2)} \ The arrows $v_R$ and $w_R$ are neither both direct nor both inverse. In this case, if $w_R$ is non-zero then it is inverse, and if $v_R$ is non-zero then it is direct.

Denote the set of graph maps $P_v\rightarrow P_w$ by $G_{v,w}$.

\vskip5pt

\begin{Lem} \label{alp} $($\cite{ALP}$, \ {\rm 4.1})$ \ Let $A$ be a gentle algebra. The set $B_{v,w}:=S_{v,w} \cup D_{v,w} \cup G_{v,w}$ is a basis of $\Hom_{C^b(A\mbox{-}{\rm proj})}(P_v,P_w)$.

\vskip5pt

Moreover, if both $v$  and $w$ are forbidden threads, then $G_{v,w}$ contains at most one graph map.
\end{Lem}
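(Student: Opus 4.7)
The first assertion is precisely \cite[Theorem~4.1]{ALP}; I would invoke it directly. Only the \emph{moreover} clause, that $|G_{v,w}|\le 1$ when both $v$ and $w$ are forbidden threads, requires a separate argument, which I plan to give as follows.

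First I would observe that every forbidden thread $w = \alpha_n\cdots\alpha_1$ is a homotopy string consisting exclusively of \emph{direct} letters: each $\alpha_i$ is a single arrow, hence a permitted path because $I$ is generated by paths of length $2$, and axiom (iv) of the second definition of a gentle algebra guarantees $s'(\alpha_{i+1})=e'(\alpha_i)$, so consecutive letters compose as direct letters in the homotopy string. Consequently, in the definition of a graph map $P_v\to P_w$, cases (LG2) and (RG2) --- which demand one direct and one inverse letter at the boundary --- are automatically excluded, and any graph map must be assembled solely from cases (LG1) and (RG1) on a common overlap $u_p\cdots u_1$.

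Next I would classify the possible overlaps. The gentle axioms (G2) and (G3) imply that for each arrow $\alpha$ there is at most one arrow $\beta$ with $\beta\alpha\in I$ and at most one arrow $\gamma$ with $\alpha\gamma\in I$. Extending $\alpha$ maximally in both directions using these uniquely prescribed arrows therefore recovers the unique forbidden thread containing $\alpha$, which shows that any two distinct forbidden threads share no arrow. Hence if $v\neq w$ the overlap $u_p\cdots u_1$ must be \emph{trivial} (a single vertex); and if $v=w$ a proper overlap would force $v_L=w_L$, contradicting the graph-map requirement $v_L\neq w_L$, so the overlap must be all of $v$ and the resulting graph map is the identity, unique up to scalar.

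The remaining case, $v\neq w$ with trivial overlap at a vertex $x$, is where the main technical work lies. The plan is to use (G1) to bound the incidences at $x$, note that the arrows $v_L, v_R, w_L, w_R$ are then determined by $v$ and $w$, invoke (LG1) and (RG1) to pin down $f_L$ and $f_R$ as scalar multiples of uniquely determined permitted paths, and then argue that at most one vertex $x$ can simultaneously accommodate a common incidence of $v$ and $w$ with $v_L\neq w_L$, $v_R\neq w_R$ and both commuting squares solvable. The main obstacle I expect is handling quivers with cycles, where $v$ and $w$ could meet at several vertices; here a careful case analysis based on the sign maps $s',e'$ and the uniqueness of forbidden extensions should show that only one such vertex supports a compatible graph map, yielding $|G_{v,w}|\le 1$.
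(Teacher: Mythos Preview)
Your reduction of the first assertion to \cite{ALP} and your observation that distinct forbidden threads share no arrow are both correct, and the argument that for $v=w$ the only graph map is the identity is fine. The problem lies in the single-vertex overlap case.

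You claim that conditions (LG2) and (RG2) ``demand one direct and one inverse letter at the boundary'' and are therefore excluded when $v,w$ are all-direct forbidden threads. This is a misreading. Condition (LG2) says that $v_L$ and $w_L$ are \emph{neither both direct nor both inverse}, and then that if $v_L\ne 0$ it is inverse and if $w_L\ne 0$ it is direct. In particular (LG2) is satisfied when $v_L=0$ (we are at the left endpoint of $v$) and $w_L$ is a direct arrow; likewise (RG2) is satisfied when $w_R=0$. So the endpoint situations are governed precisely by (LG2)/(RG2), not by (LG1)/(RG1), and these cases are exactly the ones that produce the non-identity graph maps between forbidden-thread complexes.

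Once (LG2)/(RG2) are allowed back in, the argument becomes short and you can dispense with your planned ``careful case analysis''. If the overlap is a single vertex, then (LG1) would require a nonzero extra component $f_L$ (the commuting square forces $f_L\ne 0$ because $v_L$ is a nonzero arrow), contradicting the assumption that the graph map has a single nonzero component; so (LG2) must hold. Since every letter of $v$ is direct, (LG2) forces $v_L=0$. The symmetric argument on the right gives $w_R=0$. Hence the overlap vertex is $e(v)=s(w)$, which is uniquely determined by $v$ and $w$, so there is at most one such graph map. This is exactly the paper's proof; your detour through (LG1)/(RG1) with sign maps and cycle considerations is unnecessary and, as written, incomplete.
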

\begin{proof} We only need to justify the second assertion. Assume that both $v$ and $w$ are forbidden threads, and $f\in G_{v,w},$ i.e.,
$f$ is a graph map from $P_v$ to $P_w$.

\vskip5pt

{\bf Case 1.} \ If $f$ has only one non-zero component, then $f$ is of the form:
$$\xymatrix@R=0.6cm{
P_v:  & \cdots \bullet\ar[r]^{v_L}& \bullet\ar@{=}[d] \ar[r]^{v_R} &\bullet \cdots  \\
P_w:  & \cdots \bullet\ar[r]^{w_L}& \bullet \ar[r]^{w_R} &\bullet \cdots
}$$
Since both $v$ and $w$ are forbidden threads, only $(\rm LG2)$ and $(\rm RG2)$ is possible. Thus we have $v_L=0=w_R$, and hence
$f=e(v)=s(w)$. Since $v$ and $w$ are given, it follows that such an $f$ is unique.

\vskip5pt

{\bf Case 2.} \ If $f$ has at least two non-zero component,  then $f$ is of the form:
$$\xymatrix@R=0.6cm{
P_v:  & \cdots \bullet\ar[r]^{v_L}& \bullet\ar@{=}[d]\ar[r]^{v_M} & \bullet \ar@{=}[d] \ar[r]^{v_R} &\bullet \cdots  \\
P_w:  & \cdots \bullet\ar[r]^{v_L}& \bullet\ar[r]^{v_M} & \bullet \ar[r]^{v_R} &\bullet \cdots
}$$
Since any two different forbidden threads has no common arrows, it follows that $v=w$ and $f$ is identity.

\vskip5pt

This completes the proof.
\end{proof}

\subsection{} We are now in position to state a main lemma for proving Theorem \ref{morphismbetweenforbiddenthreads}.

\begin{Lem}\label{vw}
Let $v$ and $c$ be forbidden threads,  $m$ and $\tilde {m}$ integers, $f: P_{m,v}\rightarrow P_{\tilde {m}, c}$  a non-zero non-isomorphism in $K^b(A\mbox{-}{\rm proj})$. Then

\vskip5pt

$(1)$ \ The chain map $f$ is of the form:
$$\xymatrix@R=0.5cm{
P_{m, v}:    & &  \bullet \ar[r] \ar[d]_{f=\lambda h}  &  \bullet  \cdots  \\
P_{\tilde {m}, c}:   & \cdots\bullet \ar[r]& \bullet     &
}$$
where $\lambda\in k^{*},$ and $h$ is a permitted thread satisfying
\begin{align*} e(h)=e(v), \ \ \ & e'(h)=-e'(v)   \\ s(h)=s(c),\ \ \ & s'(h)=-s'(c).\end{align*}
$($Thus, the unique nonzero component of $f$ is from the first nonzero component of $P_{m, v}$ on the left hand to the the first nonzero component of $P_{\tilde {m}, c}$ on the right hand.$)$
\vskip5pt

$(2)$ \ If $g: P_{m,v}\rightarrow P_{m', w}$ is also a non-zero non-isomorphism in $K^b(A\mbox{-}{\rm proj})$, where $w$ is a forbidden thread and $m'$ an integer,
then we have $w=c$, $P_{m',w}\cong P_{\tilde{m},c}$ and $g\in kf$. \end{Lem}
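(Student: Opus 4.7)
The plan is to prove Lemma~\ref{vw} by lifting $f$ to a chain map in $C^b(A\mbox{-}{\rm proj})$, expanding it in the ALP basis of Lemma~\ref{alp}, and using that $v$ and $c$ are forbidden threads---so every letter is direct and consecutive pairs lie in $I$---to eliminate all basis contributions except one single map of the asserted form.

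For the elimination, I observe that at any interior position $i$ of the unfolded diagram of $P_{m, v}$ the left-neighbouring letter $v_L$ is direct, so the $(L)$-condition on a single-map component $h$ reads $v_L h \in I$. Using the uniqueness of the arrow $\beta$ with $\beta v_L \in I$ afforded by $(G3)$, together with the maximality of the forbidden thread $v$, such a single map will be null-homotopic via a homotopy built from the adjacent differential of $P_v$. A symmetric argument disposes of single maps landing at interior positions of $P_{\tilde m, c}$. For double maps, the non-degeneracy $v_M f_R = f_L w_M \neq 0$ combined with $v_M$ direct and the $(R)$-condition at the left component yields a contradiction. Graph maps satisfy $|G_{v, c}| \leq 1$ by Lemma~\ref{alp}; the identity graph map (only possible when $v = c$) is ruled out because $f$ is not an isomorphism, and the remaining one-component graph map from Case~1 of the proof of Lemma~\ref{alp} coincides with the endpoint single-map case treated next.

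What remains is a single map whose nonzero component sits at the leftmost position of $P_{m, v}$ (where $v_L = 0$) and at the rightmost position of $P_{\tilde m, c}$ (where $w_R = 0$); the component is right multiplication by a permitted path $h$ from $s(c)$ to $e(v)$. Chain-map compatibility with the remaining neighbouring letters $v_R$ of $v$ and $w_L$ of $c$, once rewritten via the sign maps $s', e'$ from Subsection~2.3, translates into $e(h) = e(v),\ e'(h) = -e'(v),\ s(h) = s(c),\ s'(h) = -s'(c)$; and any extension of $h$ by a composable arrow at either end would violate one of these sign equalities, so $h$ is actually a permitted thread. Part~(2) is then immediate from the bijections of Subsection~2.5: applying part~(1) to $g$ yields a permitted thread $h'$ with $(e(h'), e'(h')) = (e(v), -e'(v))$, so the bijection $\Phi_1$ forces $h' = h$; then $(s(w), s'(w)) = (s(h), -s'(h))$ and the bijection $\Phi_2$ force $w = c$. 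Matching the cohomological degree of the single nonzero component of $g$ with that of $f$ gives $P_{m', w} \cong P_{\tilde m, c}$, and the two chain maps then agree up to scalar, so $g \in kf$.

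The main obstacle will be the null-homotopy step above: producing an explicit contracting homotopy that kills any single map at a strict interior position of $P_{m, v}$ satisfying $(L)$. The construction should use that the relation $v_L h \in I$ together with $(G3)$ forces $h$ to begin with a specific arrow that glues onto $v_L$, making $h$ factor through the differential of $P_v$; the explicit homotopy is then the resulting ``shift'' by this factorization. Verifying this rigorously will require a modest case analysis according to whether the nearby letters of $c$ align with those used in the factorization of $h$, but conceptually it is the expected statement that non-isomorphism morphisms between mouth objects in $K^b(A\mbox{-}{\rm proj})$ are controlled by the boundary components of the two string complexes.
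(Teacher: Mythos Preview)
Your overall strategy---lift to $C^b$, expand in the ALP basis, and eliminate---matches the paper's, and your treatment of part~(2) via the bijections $\Phi_1,\Phi_2$ is exactly right. But there are two genuine gaps in part~(1).

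\textbf{Double maps.} You claim that the $(R)$-condition ``at the left component'' combined with $v_M$ direct yields a contradiction. This is wrong on two counts: the left component $f_L$ satisfies condition $(L)$, not $(R)$; and more importantly, double maps between forbidden-thread complexes \emph{do} exist in $C^b$---there is no contradiction. The correct argument (the paper's Step~2) is that such a map is null-homotopic: since a gentle algebra has no commutativity relations, the equation $v_M f_R' = f_L' c_M \neq 0$ forces both sides to be the \emph{same} path, whence there is a path $h$ with $f_L' = v_M h$ and $f_R' = h c_M$, and $h$ is the required homotopy. You will need this factorization argument, not a contradiction.

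\textbf{Identity graph map and linear combinations.} You dismiss the identity graph map (when $v=c$) by ``$f$ is not an isomorphism''. But $f$ could a priori be a linear combination $\lambda_1\cdot\mathrm{id} + \lambda_2\cdot h$ in the ALP basis; the hypothesis only says this combination is non-invertible, not that $\lambda_1=0$. The paper closes this (Step~4(ii)) using that $\End(P_{m,v})$ is local: both $f$ and $h$ lie in the radical, hence so does $\lambda_1\cdot\mathrm{id}$, forcing $\lambda_1=0$. Relatedly, you say the one-component non-identity graph map ``coincides with the endpoint single-map case'', but it does not: that graph map has component the trivial path at $e(v)=s(c)$ (a \emph{trivial} permitted thread), whereas a single map has a genuine permitted path. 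The paper treats these separately (Step~3) and then uses injectivity of $\Phi_1$ (Step~4(i)) to see that the trivial-thread and non-trivial-thread candidates cannot both occur, so exactly one basis element survives.
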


\begin{proof}
$(1)$ \ By [ALP, Proposition 4.1] (c.f. Lemma \ref{alp}), $f$ is a linear combination of single maps, double maps and graph maps.
Let $f_i\ (1\leq i \leq n)$ be a basis of $\Hom_{C^b(A\mbox{-}{\rm proj})}(P_{m, v}, P_{\tilde {m}, c})$, where $f_1$ is a graph map, $f_i\ (i\in I_1)$ are single maps, $f_i\ (i\in I_2)$ are double maps,
and $\{1, \cdots, n\}=\{1\}\cup I_1\cup I_2$. Thus in
$\Hom_{C^b(A\mbox{-}{\rm proj})}(P_{m, v}, P_{\tilde {m}, c})$, $f=\sum^{n}_{i=1}\lambda_if_i$ for each $\lambda_i\in k$. But in $\Hom_{K^b(A\mbox{-}{\rm proj})}(P_{m, v}, P_{\tilde {m}, c})$, some $f_i$
possibly become zero. We analyse all the $f_i$ such that $f_i\ne 0$ in $K^b(A\mbox{-}{\rm proj})$. In the following, we denote $f_i$ by $f'$.

\vskip5pt

{\bf Step 1.} \  Suppose that $f'$ is a single map:
$$\xymatrix@R=0.6cm{
P_{m, v}:    & \cdots \bullet \ar[r]^{v_L}  &   \bullet \ar[r]^{v_R} \ar[d]_{f'}  &  \bullet  \cdots&\\
P_{\tilde {m}, c}:    & \cdots \bullet \ar[r]^{c_L}  &   \bullet \ar[r]^{c_R}   &  \bullet  \cdots&
}$$
Thus  $f'$ is a permitted path. There are $4$ cases:

{\rm (i)}:   both of $v$ and $c$ are trivial forbidden threads;

{\rm (ii)}:   $v$ is a trivial forbidden thread, and $c$ is a non-trivial forbidden thread;

{\rm (iii)}:  $v$ is a non-trivial forbidden thread, and $c$ is a trivial forbidden thread;

{\rm (iv)}:   both of $v$ and $c$ are non-trivial forbidden threads.

\noindent We prove that $(1)$ holds in all these $4$ cases.

\vskip5pt

If $v$ is a trivial forbidden thread, then $e'(f')=-e'(v), \ e(f')=e(v)$. And if there exists an arrow $\beta$ with $s(\beta)=e(f')$, then $\beta f'=0$  (otherwise, $\beta f'\ne 0$. This contradicts the assumption that $v$ is a trivial forbidden thread). Similarly, if $c$ is a trivial forbidden thread, then $s'(f')=-s'(c)$, \ $s(f')=s(c)$. And if there exists an arrow $\gamma$ with $e(\gamma)=s(f')$, then $f'\gamma=0$.

\vskip5pt

If $v$ is a non-trivial forbidden thread, then $v_R \ne 0$ (otherwise $v_R = 0$.  By ${\rm(L)}$, $v_Lf'=0$.  This contradicts the assumption that $v$ is a forbidden thread).
{\bf Claim 1:}  $e'(f')=-e'(v_R)$. Otherwise we have $e'(f')=e'(v_R)$. Since $v_R$ is an arrow, $v_R$ is a subletter of $f'$. Thus either $f'=v_R$, or $f'=v_R\alpha$, where $\alpha$ is a permitted path.
If $f'=v_R$, then $f'$ is null homotopic as indicated below, which contradicts $f'\ne 0$:
$$\xymatrix@R=0.6cm{
P_{m, v}:& \cdots \bullet  \ar[r]^{v_L} & \bullet\ar@{-->}[ld]_{0} \ar[d]_{f'} \ar[r]^{v_R} & \bullet\ar@{--}[r]\ar@{-->}[ld]_{1} \ar[r]^{v_1} &\bullet \ar@{-->}[ld]_{-1}&\cdots & \bullet \ar@{-->}[ld]\ar[r]^{v_n}&\bullet\ar@{-->}[ld]_-{(-1)^n}\\
P_{\tilde {m}, c}:& \cdots \bullet   \ar[r]^{c_L} & \bullet \ar[r]^{c_R} & \bullet &\cdots& \bullet  \ar[r]^{c_{n-1}}& \bullet&&}$$
If $f'=v_R\alpha$, then $f'$ is again null homotopic:

$$\xymatrix{
P_{m, v}:    & \cdots \bullet \ar[r]^{v_L}  &   \bullet \ar[r]^{v_R} \ar[d]_{f'} \ar@{-->}[ld]_0 &  \bullet  \cdots\ar@{-->}[ld]_{\alpha}&\\
P_{\tilde {m}, c}:    & \cdots \bullet \ar[r]^{c_L}  &   \bullet \ar[r]^{c_R}   &  \bullet  \cdots&
}$$
This proves {\bf Claim 1}, i.e., $e'(f')=-e'(v_R)$. It follows that $v_L=0$ (otherwise $v_L\ne 0$. By ${\rm (L)}$, $v_Lf'=0=v_Lv_R$. This contradicts $e'(f')=-e'(v_R)$). Hence $e'(f')=-e'(v_R)=-e'(v)$.
{\bf Claim 2}:  If there exists an arrow $\beta$ with $s(\beta)=e(f')$, then $\beta f'=0$ (otherwise, $\beta f'\ne 0$. According to ${\rm (G2)}$ and ${\rm (G3)}$ in the definition of a gentle algebra, we have $\beta v=0$. This contradicts  the assumption that $v$ is a forbidden thread).

\vskip5pt

In conclusion, if $v$ is a non-trivial forbidden thread, then $v_R \ne 0, \ e'(f')=-e'(v), \ e(f')=e(v)$. And if there exists an arrow $\beta$ with $s(\beta)=e(f')$, then $\beta f'=0$ (otherwise, $\beta f'\ne 0$. This contradicts the assumption that $v$ is a forbidden thread). Similarly, if $c$ is a non-trivial forbidden thread, then $c_R = 0, \ s'(f')=-s'(c)$, \ $s(f')=s(c)$. And if there exists an arrow $\gamma$ with $e(\gamma)=s(f')$ then $f'\gamma=0$.

\vskip5pt

Putting together, in all the case {\rm (i)} - {\rm (iv)}, any non-zero single map $f' : P_{m,v}\rightarrow P_{\tilde {m}, c}$ is of form:
$$\xymatrix@R=0.5cm{
P_{m, v}:    & &  \bullet \ar[r] \ar[d]_{f'}  &  \bullet  \cdots  \\
P_{\tilde {m}, c}:   & \cdots\bullet \ar[r]& \bullet     &
}$$
and $f'$ is a (non-trivial) permitted thread with \begin{align*} e(f')=e(v), \ \ \ & e'(f')=-e'(v)   \\ s(f')=s(c),\ \ \ & s'(f')=-s'(c).\end{align*}

\vskip5pt

{\bf Step 2.} \  Suppose that $f'$ is a double map:
$$\xymatrix@R=0.6cm{
P_{m, v}:    & \cdots \bullet \ar[r]^{v_L}  &  \bullet \ar[r]^{v_M} \ar[d]_{f'_L} & \bullet \ar[r]^{v_R} \ar[d]_{f_R'} &  \bullet  \cdots&\\
P_{\tilde {m}, c}:    & \cdots \bullet \ar@{-}[r]^{c_L}  &   \bullet \ar[r]^{c_M}  &\bullet \ar@{-}[r]^{c_R}   &  \bullet \cdots&
}$$
where $v_Mf_R'=f_L'c_M\neq0$ for permitted paths $f_L'$ and $f_R'$.
Since  there are no commutative relations in a gentle algebra,  $v_Mf_R'$ and $f_L'c_M$ are the same path, and hence there exists a path $h$ such that $f_L'=v_Mh$ and $f_R'=hc_M$.
Hence $f'$ is null homotopic:
$$\xymatrix@R=0.6cm{
P_{m, v}:    & \cdots \bullet \ar[r]^{v_L}  &  \bullet \ar@{-->}[ld]_0\ar[r]^{v_M} \ar[d]_{f_L'} & \bullet \ar[r]^{v_R} \ar[d]_{f_R'} \ar@{-->}[dl]_h  &  \bullet \ar@{-->}[ld]_0 \cdots&\\
P_{\tilde {m}, c}:    & \cdots \bullet \ar[r]^{c_L}  &   \bullet \ar[r]^{c_M}  &\bullet \ar[r]^{c_R}   &  \bullet\cdots&
}$$
This contradiction  means that a double map can not appear as $f'$.

\vskip5pt

{\bf Step 3.} \ Suppose that $f'$ is a graph map, then $f'$ is either the identity graph map or a non-identity graph map. Here we only consider the case that $f'$ is a non-identity graph map, since in
{\bf Step 4.} we will prove that $f'$ can not be an identity graph map.

\vskip5pt

By the definition, two different forbidden threads have no common arrows. It follows from ${\rm (LG2)}$ and ${\rm (RG2)}$ that a non-zero non-identity graph map $f'$ is as follows:
$$\xymatrix@R=0.5cm{
P_{m, v}:    & &  \bullet \ar[r] \ar@{=}[d]_{f'}  &  \bullet  \cdots  \\
P_{\tilde {m}, c}:   & \cdots\bullet \ar[r]& \bullet     &
}$$
We need to prove that $f'$ is a trivial permitted thread, and then the relations automatically hold: \begin{align*} e(f')=e(v), \ \ \ & e'(f')=-e'(v)   \\ s(f')=s(c),\ \ \ & s'(f')=-s'(c).\end{align*}

\vskip5pt

There are $4$ cases:

{\rm (i)}:   both of $v$ and $c$ are trivial forbidden threads;

{\rm (ii)}:   $v$ is a trivial forbidden thread, and $c$ is a non-trivial forbidden thread;

{\rm (iii)}:  $v$ is a non-trivial forbidden thread, and $c$ is a trivial forbidden thread;

{\rm (iv)}:   both of $v$ and $c$ are non-trivial forbidden threads.

\vskip5pt

The case {\rm(i)} can not occur, since $f'$ is a non-identity graph map.

In the case {\rm (ii)}, since $v$ is a trivial forbidden thread, it has  the following possibilities:\\
$(1)$\ there is exactly one arrow $\beta$ with $e(\beta)=v$ and no arrows starting at $v$;  \\
$(2)$\ there is exactly one arrow $\gamma$ with $s(\gamma)=v$ and no arrows ending at $v$;  \\
$(3)$\ there is exactly one arrow $\beta$ with $e(\beta)=v$, and exactly one arrow $\gamma$ with $s(\gamma)=v$, such that $\gamma\beta=0$.
Since $c$ is a non-trivial forbidden thread, the situation $(1)$ can not occur. Also, the situation $(3)$ can not occur
(otherwise, $c\beta=0$, this contradicts $c$ is a non-trivial forbidden thread).
Hence there is exactly one arrow $\gamma$ with $s(\gamma)=v$ and no arrows ending at $v$. In this case, $f'=v$ is a trivial permitted thread.

In the case {\rm (iii)}, we can similarly prove that $f'$ is a trivial permitted thread.

In the case {\rm (iv)}, since both of $v$ and $c$ are non-trivial forbidden threads, there exist  exactly one arrow $\beta$ with $e(v)=e(\beta)=f'$ and exactly one arrow $\gamma$ with $s(c)=s(\gamma)=f'$ such that $\gamma\beta\ne 0$.
Thus $f'$ is a trivial permitted thread.

In conclusion, any non-identity graph map $f'$ is of form:
$$\xymatrix@R=0.5cm{
P_{m, v}:    & &  \bullet \ar[r] \ar@{=}[d]_{f'}  &  \bullet  \cdots  \\
P_{\tilde {m}, c}:   & \cdots\bullet \ar[r]& \bullet     &
}$$
and $f'$ is a trivial permitted thread.

\vskip5pt

{\bf Step 4.} \ By the assumption, $f$ is a non-zero non-isomorphism from $P_{m, v}$ to $P_{\tilde {m}, c}$. Write $f$ as
$f=\lambda_1f_1+\sum\limits_{2\le i\le m} \lambda_if_i$ in $K^b(A\mbox{-}{\rm proj})$, where all $\lambda_i\in k$,  $f_1$ is a graph map, $f_i \ (2\le i\le m)$ are single maps
(note that by {\bf Step 2}, double maps can not appear).

\vskip5pt

$(\rm i)$ \ \ If $f_1$ is a non-identity graph map, by {\bf Step 1} and {\bf Step 3}, then each $f_i$ is a permitted thread and $e'(f_i)=-e'(v)$ and $e(f_i)=e(v)$.
That is, $\Phi_1: \mathcal{P}_A\rightarrow \mathcal{F}_A,\ f_i\longmapsto v$ (c.f. Subsection 2.5) for all $i$.
Since $\Phi_1$ is a bijection, it follows that all the $f_i$ are the same.
This is a contradiction, since $f_1$ is a trivial permitted thread and $f_2, \cdots, f_m$ are non-trivial permitted threads. So $f_1$ and $f_2= \cdots =f_m$ can not appear simultaneously as basis elements of $\Hom_{K^b(A\mbox{-}{\rm proj})}(P_{m, v}, P_{\tilde {m}, c})$.
Denoted $f_1$ or $f_2$ by $h$ (thus $h$ is a permitted thread). Hence $f=\lambda h$ with $\lambda\in k^*$, and we are done.

\vskip5pt

$(\rm ii)$ \ Assume that $f_1$ is the identity graph map. As in case $(\rm i)$, all the $f_i$ for $2\le i\le m$ are the same, again denoted by $h$. Thus  $h$ is a permitted thread and
$e'(h)=-e'(v)$ and $e(h)=e(v)$.  So $f=\lambda_1f_1+\lambda h$  where $\lambda\in k$. Since $f_1$ is the identity,
all the morphisms $f, f_1, h$ can be regarded as elements in $\End(P_{m,v})$, such that $f$ and $h$ are non-invertible elements.  Since $\End(P_{m,v})$ is a local algebra, it follows that $\lambda_1f_1$ is also non-invertible element in $\End(P_{m,v})$, it follows that  $\lambda_1=0$ and $f = \lambda h$.

\vskip5pt

This completes the proof of $(1)$.

\vskip5pt

$(2)$ According to $(1)$, $g: P_{m,v}\rightarrow P_{m', w}$ is of the form:

$$\xymatrix@R=0.5cm{
P_{m, v}:    & &  \bullet \ar[r] \ar[d]_{g=\lambda' h'}  &  \bullet  \cdots  \\
P_{m', w}:   & \cdots\bullet \ar[r]& \bullet     &
}$$
where $h'$ is a permitted thread and $\lambda\in k^{*}$, satisfying \begin{align*} e(h')=e(v), \ \ \ & e'(h')=-e'(v)   \\ s(h')=s(w),\ \ \ & s'(h')=-s'(w).\end{align*}
This means that the map $\Phi_1: \mathcal{P}_A\rightarrow \mathcal{F}_A$ sends $h'$ to $v$, and $\Phi_2: \mathcal{F}_A\rightarrow \mathcal{P}_A$ sends $w$ to $h'$.

\vskip5pt

By $(1)$, we have $\Phi_1(h)=v$ and $\Phi_2(c)=h$.
Since $\Phi_1$ is a bijection, it follows that  $h=h'$. While $\Phi_2$ is also a bijection, it follows that $c=w$.
Thus, $P_{m', w}$ is a shift of $P_{\tilde{m}, c}$. But from above two diagrams we see that the first non-zero position from right to left of the two complexes $P_{m', w}$ and $P_{\tilde{m}, c}$ are the same.
Hence $P_{m', w} \cong P_{\tilde{m}, c}$, and $g\in kf$.
\end{proof}

\subsection{Proof of Theorem \ref{morphismbetweenforbiddenthreads}}

\vskip5pt

Let $M$ and $M'$ be string complexes at the mouth. Thus $A\ne k$.

\vskip5pt

{\bf Step 1}. \ Since  $A$ is indecomposable and $A\ne k$, it follows from Lemma \ref{indec} that
$K^b(A\mbox{-}{\rm proj})$ is an indecomposable and $K^b(A\mbox{-}{\rm proj})\ncong \mathcal D^b(k)$. According to Lemma \ref{rv}, there exists a non-zero morphism  $f: M \longrightarrow S(M)$ such that $f$ is not an isomorphism.

\vskip5pt

{\bf Step 2}. \ If $g\in\Hom_{K^b(A\mbox{-}{\rm proj})}(M, S(M))$ is non-zero and non-isomorphism, then $g\in kf$.

\vskip5pt

Since $M$ is a string complex at the mouth, by Lemma \ref{mouth}, $M = P_{m, v}$, where $v$ is a forbidden thread and $m$ is an integer. Similarly, $S(M) = P_{\tilde{m}, c}$ for a forbidden thread $c$ and an integer $\tilde{m}$.

\vskip5pt

Since $f: P_{m, v} \longrightarrow P_{\tilde{m},c} = S(P_{m,v})$ is a non-zero morphism which is not an isomorphism, it follows from
Lemma \ref{vw}$(2)$ that $g\in kf$.

\vskip5pt

{\bf Step 3}. \ If there is an non-zero non-isomorphism $g\in\Hom_{K^b(A\mbox{-}{\rm proj})}(M, M')$, then
$M'\cong S(M)$ and $g\in kf.$

\vskip5pt

In fact, write $M' = P_{m', w}$ for a forbidden thread $w$ and an integer $m'$. Since we have already a non-zero non isomorphism $f: P_{m, v} \longrightarrow P_{\tilde{m},c} = S(M)$,
it follows from Lemma \ref{vw}$(2)$ that $P_{m', w}\cong S(M)$ and $g\in kf.$
\vskip5pt

\vskip5pt

{\bf Step 4}. \ One has \ \  $\dim_k \End(M)= \dim_k\Hom_{K^b(A\mbox{-}{\rm proj})}(M, S(M)) = \begin{cases} 1, & \mbox{if} \ M\ncong S(M); \\ 2,
& \mbox{if} \ M\cong S(M).\end{cases}$

\vskip5pt

In fact, if $M\ncong S(M)$, then any non-zero morphism $g: M\longrightarrow S(M)$ is of course not an isomorphism,  and hence by {\bf Step 2}, $g\in kf$. It follows that $\dim_k\Hom_{K^b(A\mbox{-}{\rm proj})}(M, S(M)) = 1$. Thus $\dim_k \End(M) = 1$.
If $M\cong S(M)$ with an isomorphism $\sigma: S(M)\longrightarrow M$, then $0\ne \sigma f\in {\rm rad}\End(M)$.
By {\bf Step 3}, any non-zero element in ${\rm rad}\End(M)$ is in $kf$, thus $\dim_k {\rm rad}\End(M) = 1.$ While
$\End(M)/{\rm rad}\End(M)\cong k$, one has $\dim_k \End(P_{m, v}) = 2.$

\vskip5pt

{\bf Step 5}. \  One has $$\dim_k\Hom_{K^b(A\mbox{-}{\rm proj})}(M, M')= \begin{cases} 1, & \ \ \ \mbox{if} \ M'\cong S(M) \ \mbox{and} \ M'\ncong M; \\ 2,  & \ \ \ \mbox{if} \ M'\cong S(M)\cong M; \\ 1, & \ \ \
\mbox{if} \ M'\ncong S(M) \ \mbox{and} \ M'\cong M;
\\ 0, & \ \ \ \mbox{if} \ M'\ncong S(M)\ \mbox{and} \ M'\ncong M.\end{cases}$$

In fact, if $M'\cong S(M)$, then the assertion follows from {\bf Step 4}. If $M'\ncong S(M)$ and $M'\cong M$, then the assertion again follows from {\bf Step 4}.
If $M'\ncong S(M)$ and $M'\ncong M$, then any nonzero morphism (if there exists) in $\Hom_{K^b(A\mbox{-}{\rm proj})}(M, M')$ should be an isomorphism, by {\bf Step 3}. But, since
$M'\ncong M$, it follows that $\Hom_{K^b(A\mbox{-}{\rm proj})}(M, M') = 0.$

\vskip5pt

Now, Theorem \ref{morphismbetweenforbiddenthreads} is just a reformulation of {\bf Step 5}. \hfill $\square$

\subsection{Proof of Corollary \ref{orthogonal}.}  \  Let $C$ and $C'$ are different characteristic components of $K^b(A\mbox{-}{\rm proj})$, up to shift, $X\in C$, and $Y\in C'$. Since $Y$ and $Y[i]$ are in the same characteristic component $C'$, up to shift, it suffices to show that $\Hom_{K^b(A\mbox{-}{\rm proj})}(X, Y) = 0$ for each $X\in C$ and each $Y\in C'$.
Use  double induction on the height $h(X)$ and $h(Y)$  (cf., Subsection 4.5). Assume that $h(X) = 1$, i.e., $X$ is at the mouth of $C$. If $h(Y)=1$, i.e., $Y$ is at the mouth of $C'$, then
$Y\ncong S(X)$ and $Y\ncong X$, since $C'$ and $C$ are different characteristic components, up to shift. It follows from Theorem \ref{morphismbetweenforbiddenthreads} that
$\Hom_{K^b(A\mbox{-}{\rm proj})}(X, Y) = 0$. If $h(Y)=2$, then 
by the shape of $C'$ (cf. Proposition \ref{formofcomponent}) there is an Auslander-Reiten triangle $\tau Z\longrightarrow Y\longrightarrow Z\longrightarrow S(Z)$ with $h(Z)=h(\tau Z)=1$. Applying $\Hom_{K^b(A\mbox{-}{\rm proj})}(X,-)$ to this Auslander-Reiten triangle one sees $\Hom_{K^b(A\mbox{-}{\rm proj})}(X, Y) = 0$.

Suppose that the assertion holds for $h(X) = 1$ and $h(Y)=m \ (m\ge 2)$. We prove the assertion holds also for
$h(Y)=m+1\ge 3$. By Proposition \ref{formofcomponent} there is an Auslander-Reiten triangle $\tau Z\longrightarrow Y\oplus Y' \longrightarrow Z\longrightarrow S(Z)$ with $h(Y')=m-1$, $h(Z)=h(\tau Z)=m$. Again applying $\Hom_{K^b(A\mbox{-}{\rm proj})}(X,-)$ and using the inductive hypothesis one gets $\Hom_{K^b(A\mbox{-}{\rm proj})}(X, Y) = 0$.

\vskip5pt

Thus, we have proved the assertion for $h(X) = 1$. By the same argument one can show the assertion for for $h(X) = 2$. Assume that the assertion holds for $h(X) = n \ (n\ge 2)$. By the similar argument one sees that the assertion holds also for
$h(X)=n+1$. This completes the proof. \hfill $\square$

\section{\bf Proof of Theorem \ref{main}}

\subsection{Exceptional $2$-cycles of the form $(E, E)$} We first point out the following fact.

\begin{Prop}\label{2cycles} Let $\mathcal{T}$ be an indecomposable $k$-linear Hom-finite Krull-Schmidt triangulated category with Serre functor. Assume that  $\mathcal{T}$ is homotopy-like.
Then there exists an exceptional $2$-cycle of the form  $(E, E)$ in $\mathcal{T}$ if and only if $\mathcal T\cong \mathcal D^b(k).$
\end{Prop}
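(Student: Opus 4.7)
The plan is to handle both implications separately. For the ``if'' direction, in $\mathcal D^b(k)$ the Serre functor is the identity, so taking $E = k$ gives $\Hom^{\bullet}(E,E)\cong k$ and $S(E)\cong E = E[0]$, while (E3) is vacuous for $n = 2$; hence $(k,k)$ is an exceptional $2$-cycle.

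For the ``only if'' direction, suppose $(E,E)$ is an exceptional $2$-cycle in $\mathcal T$. From (E1), $\End_{\mathcal T}(E)\cong k$ and $\Hom_{\mathcal T}(E, E[i]) = 0$ for every $i\ne 0$; in particular $E$ is indecomposable. By (E2) there is an integer $m$ with $S(E)\cong E[m]$; Serre duality then gives $\Hom_{\mathcal T}(E, E[m])\cong \End_{\mathcal T}(E)^{*}\cong k\ne 0$, forcing $m = 0$. Hence $S(E)\cong E$ and $\tau E\cong E[-1]$.

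Now I argue by contradiction: assume $\mathcal T\not\cong\mathcal D^b(k)$, and consider the Auslander-Reiten triangle $\tau E \to Z \to E\xrightarrow{h} S(E)$ ending at $E$; by Lemma \ref{rv} we have $Z\ne 0$. If $E$ is at the mouth, then $Z$ is indecomposable; fixing an isomorphism $\sigma:S(E)\to E$, the composition $\sigma h\in\End_{\mathcal T}(E) = k$ is nonzero (since $h\ne 0$ in an AR-triangle), hence an isomorphism, so $h$ itself is an isomorphism; rotating the triangle to $Z\to E\xrightarrow{h} S(E)\to Z[1]$ then forces $Z\cong 0$, contradicting $Z\ne 0$. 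If $E$ is not at the mouth, write $Z = Z_1\oplus Z_2$ with $Z_1$ indecomposable and $Z_2\ne 0$; Lemma \ref{gfnot0}(1), applicable since $\End_{\mathcal T}(E) = k$, gives $g_1 f_1\ne 0$ in $\Hom_{\mathcal T}(\tau E, E) = \Hom_{\mathcal T}(E[-1], E) = \Hom_{\mathcal T}(E, E[1])$, which vanishes by (E1)---again a contradiction. The main obstacle is pinning down that the Calabi-Yau shift $m$ must equal $0$; once $S(E)\cong E$ is in hand, the argument reduces to the structural analysis already performed for Theorem \ref{indsphericalmouth}.
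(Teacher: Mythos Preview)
Your proof is correct, and its core is essentially the paper's argument. The paper's proof is shorter: after establishing $S(E)\cong E$ exactly as you do, it invokes the second clause of Lemma~\ref{rv} directly to obtain a nonzero \emph{non-isomorphism} $f:E\to S(E)$; then the isomorphism $\sigma$ and $f$ are linearly independent in $\Hom_{\mathcal T}(E,S(E))\cong\End_{\mathcal T}(E)$, contradicting $\End_{\mathcal T}(E)\cong k$.

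Your Case~1 is this same argument in contrapositive form (if every nonzero map $E\to S(E)$ were an isomorphism, the AR-triangle would have zero middle term, contradicting Lemma~\ref{rv}). Note, however, that your Case~1 reasoning nowhere uses that $Z$ is indecomposable: the steps $h\ne 0$, $\sigma h\in\End_{\mathcal T}(E)=k$, hence $h$ is an isomorphism, hence $Z=0$, are valid regardless of the shape of $Z$. Thus your case split is unnecessary and the appeal to Lemma~\ref{gfnot0} in Case~2 can be dropped entirely.
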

\begin{proof} It is clear that $(k, k)$ is an exceptional cycle in $\mathcal D^b(k).$ Assume that $(E, E)$ is an exceptional cycle in $\mathcal{T}$. Then $S(E)\cong E[t]$ for some integer $t$, by ${\rm(E2)}$.
Since $\Hom_\mathcal T(E, E[t]) = \Hom_\mathcal T(E, S(E)) \cong \End(E)\cong k,$ by ${\rm(E1)}$ one has $t=0$. Thus there is an isomorphism $\sigma: E\longrightarrow S(E)$.
We then claim $\mathcal T\cong \mathcal D^b(k).$
Otherwise, by Lemma \ref{rv}, there is a nonzero non-isomorphism $f: E\longrightarrow S(E)$. Since
$\sigma$ and $f$ are linearly independent, $\dim_k\End_\mathcal T(E) = \dim_k\Hom_\mathcal T(E, S(E))\ge 2$. This
contradicts $\End(E)\cong k$.
\end{proof}

\subsection{Exceptional $2$-cycles} Exceptional $2$-cycles in $K^b(A\mbox{-}{\rm proj})$  are more complicated than the other cases. This phenomenon is caused by the fact that there is no restriction of the condition {\rm (E3)}.

\begin{Lem}\label{2cyclefornonA3} \ Let $A = kQ/I$ be a finite-dimensional gentle algebra with $A\ne k$, where $Q$ is a finite connected quiver such that the underlying graph of $Q$ is not of type $A_3$, $(X_1, X_2)$ an exceptional cycle in $K^b(A\mbox{-}{\rm proj})$.
Then $X_1$ is at the mouth of a characteristic component of {\rm AG}-invariant $(2, m)$, and
$(X_1, X_2)\approx (X_1, \tau X_1)$.
\end{Lem}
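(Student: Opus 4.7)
The plan is to handle this in three stages: first, eliminate the possibility that $X_1$ is a band complex; second, show that $X_1$ must be at the mouth of a characteristic component; and third, identify the AG-invariant and verify the equivalence $(X_1,X_2)\approx(X_1,\tau X_1)$.

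For the first stage, if $X_1$ were a band complex $P_{m,w,\mu}$ then Theorem \ref{2terms}(2) would place it in a homogeneous tube, yielding $\tau X_1\cong X_1$ and hence $S(X_1)\cong X_1[1]$; Serre duality would then force $\dim_k\Hom(X_1,X_1[1])=\dim_k\End(X_1)\ge 1$, contradicting $\Hom^{\bullet}(X_1,X_1)\cong k$. So $X_1$ (and symmetrically $X_2$) must be a string complex lying in a characteristic component $C$. For the second stage, I would assume toward contradiction that $X_1$ is not at the mouth. Since $A\ne k$, Lemma \ref{indec} gives $K^b(A\mbox{-}{\rm proj})\not\cong \mathcal D^b(k)$, so Lemma \ref{rv} yields an Auslander-Reiten triangle $X_1\to Z_1\oplus Z_2\to \tau^{-1}X_1$ with $Z_1$ indecomposable and $Z_2\ne 0$, and Lemma \ref{gfnot0}(1) produces a non-zero composite $g_1f_1\colon X_1\to \tau^{-1}X_1$. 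Using (E2), one computes $\tau^{-1}X_1\cong X_2[1-m_2]$, so $\Hom(X_1,X_2[1-m_2])\ne 0$; Serre duality converts this into $\Hom(X_2,X_2[m_1+m_2-1])\ne 0$, and since $\Hom^{\bullet}(X_2,X_2)\cong k$ is concentrated in degree $0$, this forces $m_1+m_2=1$ and hence $\tau^2 X_1\cong X_1[-1]$. If moreover $\tau X_1\cong X_1[k]$ for some integer $k$, then $X_2$ would be a shift of $X_1$, so $(X_1,X_2)\approx(X_1,X_1)$, and Proposition \ref{2cycles} would give $K^b(A\mbox{-}{\rm proj})\cong \mathcal D^b(k)$, contradicting $A\ne k$. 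Hence the $\tau$-period of $X_1$ modulo shift equals $2$, which forces the AG-invariant $(n_0,m_0)$ of $C$ to equal $(2,1)$.

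I would then split into the three shapes from Proposition \ref{formofcomponent}. If $C$ is a tube $\mathbb{Z}\vec{A_\infty}/\langle\tau^p\rangle$, combining $\tau^p=\mathrm{id}$ on $C$ with the homotopy-like property of $K^b(A\mbox{-}{\rm proj})$ forces $m_0=n_0$, contradicting $(2,1)$. If $C=\mathbb{Z}\vec{A_n}$, Fact \ref{an} identifies $K^b(A\mbox{-}{\rm proj})$ with $\mathcal D^b(k\vec{A_n})$, and an explicit $\tau$-orbit computation using $\tau M_{ij}=M_{i+1,j+1}$ for $j<n$ and $\tau M_{i,n}=M_{1,i}[-1]$ shows that a non-mouth $\tau$-orbit has size $2$ modulo shift precisely when $n=3$; then $A$ would be derived equivalent to $k\vec{A_3}$, but for gentle algebras derived equivalence to $k\vec{A_n}$ forces the underlying graph of $Q$ to be $A_n$, contradicting the hypothesis. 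The remaining case $C=\mathbb{Z}\vec{A_\infty}$ is the subtle one: using Lemma \ref{cone2} to recognize $\mathrm{Cone}(f_1\colon X_1\to Z_1)$ as a mouth string complex, together with Theorem \ref{morphismbetweenforbiddenthreads} and the combinatorial description of morphisms between string complexes from \cite{ALP}, the aim is to produce a non-zero morphism $X_1\to X_1[k]$ with $k\ne 0$, contradicting $\Hom^{\bullet}(X_1,X_1)\cong k$. This establishes that $X_1$ is at the mouth.

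For the third stage, with $X_1$ now at the mouth, $S^2 X_1\cong X_1[m_1+m_2]$ gives $\tau^2 X_1\cong X_1[m_1+m_2-2]$, and the minimality argument yields AG-invariant $(2,m_1+m_2)$ for $C$. Since $X_2=S(X_1)[-m_1]=\tau X_1[1-m_1]$, the conclusion $(X_1,X_2)\approx(X_1,\tau X_1)$ follows. The hardest part will be the $\mathbb{Z}\vec{A_\infty}$ subcase of the second stage: ruling out non-mouth exceptional $2$-cycles there requires a delicate local analysis of Hom spaces between non-mouth string complexes via the combinatorial description of \cite{ALP}, since Serre duality alone only yields information about $\Hom(X_1,X_1[0])$ under the relation $\tau^2 X_1\cong X_1[-1]$.
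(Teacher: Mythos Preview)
Your first and third stages essentially match the paper's argument. The band-complex elimination is identical, and the final identification of the AG-invariant and of $(X_1,X_2)\approx(X_1,\tau X_1)$ is the same (the paper rules out $n=1$ by showing it would force $S(X_1)\cong X_1$ and hence $\dim_k\End(X_1)=2$ via Theorem~\ref{morphismbetweenforbiddenthreads}, which is equivalent to your use of Proposition~\ref{2cycles}).

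The genuine gap is in your second stage. Your shape-by-shape analysis leaves the $\mathbb{Z}\vec{A_\infty}$ case unresolved: you acknowledge that it ``requires a delicate local analysis'' via the combinatorics of \cite{ALP}, but give no concrete mechanism for producing a nonzero $\Hom(X_1,X_1[k])$ with $k\ne 0$ from the single relation $\tau^2 X_1\cong X_1[-1]$. There is no evident reason the ALP basis would yield such a morphism directly, and Serre duality alone, as you note, gives nothing beyond degree~$0$. There is also a smaller gap earlier: you assert that $X_1$ lies in a \emph{characteristic} component before knowing it is at the mouth, but by definition a component is characteristic only once it is known to contain a mouth string complex; Proposition~\ref{formofcomponent} does not apply until that is established.

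The paper bypasses the shape analysis entirely with one additional idea you are missing. Alongside $g_1f_1\colon X_1\to\tau^{-1}X_1$, consider the connecting morphism $h\colon X_1\to S(X_1)$ of the Auslander--Reiten triangle $\tau X_1\to Y_1\oplus Y_2\to X_1\xrightarrow{h}S(X_1)$. Both $g_1f_1$ and $h$ are nonzero elements of $\Hom^{\bullet}(X_1,X_2)$. By Lemma~\ref{con1} one has ${\rm Cone}(g_1f_1)\cong{\rm Cone}(f_1)\oplus{\rm Cone}(g_1)$, and Lemma~\ref{cone2} makes both summands mouth objects. On the other hand ${\rm Cone}(h)=Y_1[1]\oplus Y_2[1]$, and the not-$A_3$ hypothesis enters precisely here: if both $Y_1$ and $Y_2$ were at the mouth then the component would be $\mathbb{Z}\vec{A_3}$, forcing $\mathcal D^b(A)\cong\mathcal D^b(k\vec{A_3})$ and hence (by \cite{AH}) $Q$ of type $A_3$. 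So at most one of $Y_1,Y_2$ is at the mouth, whence ${\rm Cone}(h)\not\cong{\rm Cone}(g_1f_1)$, and $h$ and $g_1f_1$ are linearly independent even when they live in the same graded piece. This gives $\dim_k\Hom^{\bullet}(X_1,X_2)\ge 2$, contradicting $\Hom^{\bullet}(X_1,X_1)\cong k$. No case distinction on the shape of $C$ is needed.
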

\begin{proof} By Lemma \ref{indec}, $K^b(A\mbox{-}{\rm proj})$ is indecomposable and $K^b(A\mbox{-}{\rm proj})\ncong \mathcal D^b(k)$.

First, $X_1$ is a string complex. Otherwise,  $X_1$ is a band complex. Then  $\tau X_1 = X_1$ (cf. Theorem \ref{2terms}$(2)$), and hence by $(\rm E1)$ one gets a contradiction
$$0 = \Hom(X_1, X_1[1])=\Hom(X_1, \tau X_1[1]) = \Hom(X_1, S(X_1))\cong \Hom(X_1, X_1)^*.$$

Second, we show that $X_1$ is at the mouth. Otherwise, the middle term of the Auslander-Reiten triangle starting at $X_1$ is either $0$, or of the form
$Z_1\oplus Z_2$ with $Z_1$ and $Z_2$ indecomposable (cf. Theorem \ref{2terms}$(1)$). However, the first case is impossible, by Lemma \ref{rv}.
Thus there is an Auslander-Reiten triangle $$X_1\xrightarrow{\binom{f_1}{f_2}} Z_1\oplus Z_2\xrightarrow{(g_1,-g_2)} \tau^{-1}X_1\longrightarrow X_1[1]$$ with $Z_1$ and $Z_2$ indecomposable.
Since $(X_1, X_2)$ is an exceptional cycle, $\End(X_1)\cong k$. By Lemma \ref{gfnot0}$(1)$ one has $0\ne g_1f_1: X_1\longrightarrow \tau^{-1}X_1$.
According to Lemma \ref{con1}, there is a distinguished triangle $${\rm Cone}(f_1)[-1]\oplus {\rm Cone}(g_1)[-1]\longrightarrow X_1 \xrightarrow{g_1f_1} \tau^{-1}X_1\longrightarrow {\rm Cone}(f_1)\oplus {\rm Cone}(g_1).$$
By Lemma \ref{cone2}, ${\rm Cone}(f_1)$ and ${\rm Cone}(g_1)$ are at the mouth.
By the assumption $\tau X_1$ is also not at the mouth, thus there is an Auslander-Reiten triangle
$$\tau X_1\longrightarrow Y_1\oplus Y_2 \longrightarrow X_1\stackrel h \longrightarrow S(X_1)$$ where
both $Y_1$ and $Y_2$ are indecomposable. We claim that at most one of $Y_1$ and $Y_2$ is at the mouth.

\vskip5pt

Otherwise, both $Y_1$ and $Y_2$ are at the mouth. Thus $Y_1\ncong Y_2$ (otherwise, $Y_1$ is not the at mouth).
Since the component $C$ where $X_1$ lies in is a characteristic component, it follows from Proposition \ref{formofcomponent} that $C$ has to be of type $\Bbb Z\vec{A_3}$. By  Lemma \ref{for2cycles},  $\mathcal D^b(A)\cong \mathcal D^b(k\vec{A_3})$. Thus, by [AH], the quiver $Q$ of $A$ is just of type $A_3,$ which contradicts the assumption.

\vskip5pt

Thus, at most one of $Y_1$ and $Y_2$ is at the mouth, while both ${\rm Cone}(f_1)$ and ${\rm Cone}(g_1)$ are at the mouth. It follows that
$${\rm Cone}(h) = Y_1[1]\oplus Y_2[1]\ncong {\rm Cone}(g_1f_1) = {\rm Cone}(f_1)\oplus{\rm Cone}(g_1).$$
By ${\rm(E2)}$, $S(X_1) \cong X_2[m_1]$ and
$S^{-1}(X_1) \cong X_2[-m_2]$ for some integers $m_1$ and $m_2$. Thus $\tau^{-1}X_1 \cong X_2[1-m_2]$, and both $g_1f_1: X_1\longrightarrow \tau^{-1}X_1$ and $h: X_1\longrightarrow S(X_1)$ are in some components
of the complex $\Hom^{\bullet}(X_1, X_2)$. Note that $g_1f_1$ and $h$ can not be linear dependent, even if  they are in the same components
of $\Hom^{\bullet}(X_1, X_2)$: otherwise, by the distinguished triangles
$$\xymatrix@R=0.4cm{X_1\ar[r]^-h & S(X_1)\ar[r] & Y_1[1]\oplus Y_2[1] \ar[r] & X_1[1]\\
X_1 \ar[r]^-{g_1f_1} & \tau^{-1} X_1\ar[r] & {\rm Cone}(f_1)\oplus {\rm Cone}(g_1)\ar[r] & X_1[1]}$$
we get a contradiction ${\rm Cone}(h)\cong {\rm Cone}(g_1f_1)$. Write $\dim_k\Hom^{\bullet}(X_1, X_2)$ for $\sum\limits_{i\in\Bbb Z}\dim_k\Hom(X_1, X_2[i])$. Thus $\dim_k\Hom^{\bullet}(X_1, X_2)\ge 2$, and then
$$\dim_k\Hom^{\bullet}(X_1, X_1) = \dim_k\Hom^{\bullet}(X_1, S(X_1)) = \dim_k\Hom^{\bullet}(X_1, X_2) \ge 2$$
which contradicts $(\rm E1)$. This proves that $X_1$ is at the mouth.

\vskip5pt

Hence $(X_1, X_2) \approx (X_1, S(X_1))\approx (X_1, \tau X_1)$, with $X_1$ at the mouth of the characteristic component $C$,
say, of {\rm AG}-invariant $(n, m)$. Finally, we show that $n = 2$. In fact,  $$\tau^2 X_1 = S^{2}(X_1)[-2] \cong S(X_2[m_1-2])\cong X_1[m_1+m_2-2].$$
By the definition of the {\rm AG}-invariant, $n = 2$ or $n = 1$. We claim that $n = 2$. Otherwise, $(1, m)$ is the {\rm AG}-invariant, and thus $\tau X_1\cong X_1[m-1]$ and $S(X_1) \cong X_1[m]$. Then
$$\Hom_{K^{b}(A\mbox{-}{\rm proj})}(X_1, X_1[m]) \cong \Hom_{K^{b}(A\mbox{-}{\rm proj})}(X_1, S(X_1))\cong \End(X_1)\ne 0.$$ By  $(\rm E1)$, $m = 0$ and $S(X_1)\cong X_1$.
Then $\dim_k\End(X_1) = 2$,  by Lemma \ref{rv} (or by Theorem \ref{morphismbetweenforbiddenthreads}). This contradicts $(\rm E1)$.
This completes the proof. \end{proof}

\subsection{Proof of Theorem \ref{main}}

$(1)$ \ Let $X$ be an indecomposable object at the mouth of a characteristic component $C$ with {\rm AG}-invariant $(n, m)$. Thus, $n$ is the smallest positive integer and $m$ is the unique integer, such that $\tau^n Y \cong Y[m-n]$ for an arbitrary indecomposable object $Y$ at the mouth of $C$.

\vskip5pt

First, assume that $n\ge 2$. Then
$S(\tau^{i}X) \cong \tau^{i+1}X[1]$ for $0\le i\le n-2$, and $S(\tau^{n-1}X) \cong \tau^{n}X[1] \cong X[m-n+1]$.
Thus $(X, \tau X, \cdots, \tau^{n-1}X)$ satisfies ${\rm (E2)}$.

\vskip5pt

To check that $(X, \tau X, \cdots, \tau^{n-1} X)$ satisfies ${\rm (E3')}$, assume $n\ge 3$.
It suffices to show $\Hom(X, \tau^{i}X[j]) = 0$ for $2\le i\le n-1$ and for all $j\in \Bbb Z$. Otherwise, since $\tau^{i}X[j]$ is at the mouth of characteristic component $C[j]$,
it follows from Theorem \ref{morphismbetweenforbiddenthreads} that
either $\tau^{i}X[j]\cong X$ or $\tau^{i}X[j]\cong S(X).$ In the first case $\tau^{i}X\cong X[-j]$.
Since $i<n,$ this contradicts the definition of the {\rm AG}-invariant.
In the second case $\tau^{i-1}X \cong X[1-j]$, again a contradiction for the same reason.

\vskip5pt

Now we prove that $(X, \tau X, \cdots, \tau^{n-1} X)$ satisfies ${\rm (E1')}$. Since $X\ncong S(X)$ (otherwise $\tau X = X[-1]$, which contradicts $n\ge 2$),
it follows from Theorem \ref{morphismbetweenforbiddenthreads} that $\End(X)=k$.
Also, $\Hom(X, X[j]) =0$ for $j\ne 0$.  Otherwise,
by Theorem \ref{morphismbetweenforbiddenthreads},  either $X[j]\cong X$ or $X[j]\cong S(X).$
The first case is impossible, since $K^b(A\mbox{-}{\rm proj})$ is homotopy-like. In the second case, we have
$\tau X \cong X[j-1]$, which contradicts $n\ge 2.$ This shows $\Hom^{\bullet}(X, X)=k$, i.e., the condition ${\rm (E1')}$ holds.

\vskip5pt

Thus, if $n\ge 2$ then $(X, \tau X, \cdots, \tau^{n-1}X)$ is an exceptional $n$-cycle.

\vskip5pt

Next, assume $n =1$.  Since $(1, m)$ is the {\rm AG}-invariant of $C$, $S(X) = \tau X[1] \cong X[m-n+1] = X[m]$, i.e., $X$ is an $m$-Calabi-Yau object. To say that $X$ is an exceptional $1$-cycle, it remains to show
$\Hom^{\bullet}(X, X)=k\oplus k[-m]$. We divide into two cases.

If $m = 0$, then $S(X) \cong X$, and then by Theorem \ref{morphismbetweenforbiddenthreads}, $\End(X)=k\oplus k$. For $j\ne 0$, we have $X[j]\ncong X$ and $X[j]\ncong S(X)$,
then by Theorem \ref{morphismbetweenforbiddenthreads}, $\Hom_{K^b(A\mbox{-}{\rm proj})}(X, X[j])= 0$. Thus $\Hom^{\bullet}(X, X)=k\oplus k$.

If $m\ne 0$, then $X\ncong X[m]$, i.e., $X\ncong S(X)$, and then $\End(X) = k$ by Theorem \ref{morphismbetweenforbiddenthreads}. Also,
$$\Hom_{K^b(A\mbox{-}{\rm proj})}(X, X[m])\cong\Hom_{K^b(A\mbox{-}{\rm proj})}(X, S(X))\cong \End(X) = k.$$
For $j\ne 0$ and $j\ne m$, one has $X[j]\ncong X$ and $X[j]\ncong X[m]\cong S(X)$. It follows from Theorem \ref{morphismbetweenforbiddenthreads} that
$\Hom_{K^b(A\mbox{-}{\rm proj})}(X, X[j]) = 0$. Thus $\Hom^{\bullet}(X, X)=k\oplus k[-m]$.

This shows that if $n =1$ then $X$ is an exceptional $1$-cycle.

\vskip5pt

Now, we prove the uniqueness of exceptional cycle in the characteristic component $C$, up to $\approx$.
For this, assume that $(E_1, \cdots, E_l)$ is an arbitrary exceptional cycle in $C$ with $l\ge 1$ (thus each $E_i\in C$). Then $E_1$ is at the mouth: if $l\ne 2$
then this follows from Theorem \ref{indsphericalmouth}$(2)$, and if $l = 2$ then this follows from Lemma \ref{2cyclefornonA3}.
By what we have proved, $(E_1, \tau E_1, \cdots, \tau^{n-1}E_1)$ is also an exceptional cycle. It follows from Lemma \ref{shift}$(1)$ that
$l = n$ and $(E_1, \cdots, E_l)\approx (E_1, \tau E_1, \cdots, \tau^{n-1}E_1)$.

\vskip5pt

It remains to prove that $(E_1, \tau E_1, \cdots, \tau^{n-1}E_1)\approx (X, \tau X, \cdots, \tau^{n-1}X),$ where $X$ is an arbitrary indecomposable object at the mouth of $C$.
By Proposition \ref{formofcomponent}, $C$ is one of the form $$\Bbb Z\vec{A_n} \ \ (n\ge 2), \ \ \ \ \Bbb Z\vec{A_{\infty}}, \ \ \ \ \Bbb Z\vec{A_{\infty}}/\langle \tau^n\rangle \ \ (n\ge 1).$$
If $C$ is of the form $\Bbb Z\vec{A_{\infty}}$ or $\Bbb Z\vec{A_{\infty}}/\langle \tau^n\rangle \ (n\ge 1)$, then $E_1 = \tau^t X$ for some integer $t$, since both $X$ and $E_1$ are at the mouth of $C$. Write $t=qn+r$, where $q$ and $r$ are integers  with $0\le r \le n-1$. Then $E_1=\tau^{t}X=\tau^{qn+r}X =\tau^{r}X[q(m-n)]$, and thus by Lemma \ref{shift}$(1)$ one has
$$(E_1, \tau E_1, \cdots, \tau^{n-1}E_1)\approx (\tau^{r}X, \tau E_1, \cdots, \tau^{n-1}E_1)\approx (X, \tau X, \cdots, \tau^{r}X, \cdots, \tau^{n-1}X).$$
If $C$ is of the form $\Bbb Z\vec{A_n}$, then by Fact \ref{an}, $E_1 = \tau^t (X)[s]$ for some integers $t$ and $s$, since both $X$ and $E_1$ are at the mouth of $C$. Again write $t=qn+r$, where $q$ and $r$ are integers  with $0\le r \le n-1$.
By the same argument as above one has
$$(E_1, \tau E_1, \cdots, \tau^{n-1}E_1)\approx (\tau^{r}X, \tau E_1, \cdots, \tau^{n-1}E_1)\approx (X, \tau X, \cdots, \tau^{r}X, \cdots, \tau^{n-1}X).$$
This completes the proof of assertion $(1)$.

\vskip5pt

$(2)$ \ Let $(E_1, \cdots, E_n)$ be an exceptional cycle in $K^b(A\mbox{-}{\rm proj})$ with $n\ge 2$.
We want to prove that there is a characteristic component $C$ of AG-invariant $(n, m)$ such that $E_1$ is at the mouth of $C$ and
$(E_1, \cdots, E_n)\approx (E_1, \tau E_1, \cdots, \tau^{n-1}E_1)$.

\vskip5pt

First, $E_1$ is a string complex. Otherwise, $E_1$ is a band complex, and then $\tau E_1 = E_1$ (cf. Theorem 2.4$(2)$);
and hence by $(\rm E1)$ we get a contradiction
$0 = \Hom(E_1, E_1[1])=\Hom(E_1, \tau E_1[1]) = \Hom(E_1, S(E_1))\cong \Hom(E_1, E_1)^*.$

\vskip5pt

Second, $E_1$ is at the mouth. In fact, for $n = 2$ this follows from Lemma \ref{2cyclefornonA3}, and for $n \ge 3$ this follows from Theorem \ref{indsphericalmouth}$(2)$.

\vskip5pt

Thus $E_1$ is a string complex at the mouth. By definition $E_1$ is in a characteristic component $C$, say with
AG-invariant $(n', m)$. By $(1)$, $(E_1, \tau E_1, \cdots, \tau^{n'-1}E_1)$ is also an exceptional cycle. By Lemma \ref{shift}$(1)$ we have
$n' = n$ and $(E_1, \cdots, E_n)\approx (E_1, \tau E_1, \cdots, \tau^{n-1}E_1)$.
This completes the proof of assertion $(2)$.

\vskip5pt

$(3)$ \ By $(1)$ and $(2)$, any object in exceptional $n$-cycles with $n\ge 2$ is indecomposable and at the mouth.
By Lemma \ref{indec}, $K^b(A\mbox{-}{\rm proj})$ is indecomposable; since $K^b(A\mbox{-}{\rm proj})$ is homotopy-like,
by Theorem \ref{indsphericalmouth}$(1)$,  any exceptional $1$-cycle in $K^b(A\mbox{-}{\rm proj})$ is indecomposable and at the mouth.

\vskip5pt

Let $E$ be a string complex. If $E$ is at the mouth of a characteristic component of  {\rm AG}-invariant $(1, d)$, then by $(1)$, $E$ is an exceptional $1$-cycle. Conversely,
if $E$ is an exceptional $1$-cycle, such that $E$ is $d$-Calabi-Yau, then $E$ is at the mouth. Hence $E$ is in a characteristic component $C$ of $K^b(A\mbox{-}{\rm proj})$, say with {\rm AG}-invariant $(n, m)$. Since $S(E)\cong E[d]$, $\tau(E)\cong E[d-1]$.
By the definition of {\rm AG}-invariant one has $n =1$ and $m = d$.

\vskip5pt

If $E$ is a band complex which is an exceptional $1$-cycle, then $E$ is at the mouth, and hence $E$ is at the mouth of a homogeneous tube (cf. Theorem \ref{2terms}$(2)$).
\hfill $\square$

\section{\bf Examples}\begin{Ex} \ Let $A=k[x]/\langle x^2\rangle$. Since $A$ is symmetric,
$K^b(A\mbox{-}{\rm proj})$ is a $0$-Calabi-Yau category $($in the sense of {\rm [K])}, and hence each object is $0$-Calabi-Yau.
The indecomposable objects in $K^b(A\mbox{-}{\rm proj})$ are of form $X_l[t]$ with $t\in\mathbb{Z}$ and $l\geq0$, where $X_l$ is the following complex with each differential given by multiplication by $x:$
$$\xymatrix{
0 \ar[r] & A=X_l^{-l}\ar[r]^-x& A\ar[r]^-x & \cdots \ar[r]^-x &  A=X_l^0 \ar[r]  &  0.}$$
The Auslander-Reiten quiver of $K^b(A\mbox{-}{\rm proj})$ has a unique component of the type $\Bbb ZA_\infty$, which is a characteristic component with ${\rm AG}$ invariant $(1,0)$, as given below.
$$\xymatrix@R=0.5cm{
  &  \vdots &  &  \vdots  &  &  \vdots  &  &  \\
\cdots   &  X_2[-2] \ar[rd]&  &  X_2[-1]\ar[rd]  &  &  X_2 \ar[rd] &  &  \cdots\\
\cdots &  &  X_1[-1]\ar[ru]\ar[rd] &    &  X_1 \ar[ru]\ar[rd]&    & X_1[1] &  \cdots\\
\cdots   &  A[-1]\ar[ru] &  &  A=X_0 \ar[ru]  &  &  A[1] \ar[ru] &  &  \cdots
}$$
\vskip5pt \noindent All the exceptional $1$-cycles are $A[t]$ with $t\in \mathbb{Z}$. There are no exceptional $n$-cycle with $n\geq 2$.
One can see this directly as follows. If $l\ge 1$, then one constructs a chain map $f: X_l\rightarrow X_l[1]$ with all non-zero component being identity.
Then $f$ is not null-homotopic. Since $X_l\ncong X_l[1]$, $f$ can not be an isomorphism, and hence $\Hom^\bullet(X_l, X_l)\ne k$.
\end{Ex}

\vskip5pt

\begin{Ex} \label{bandatmouth} \ Theorem {\rm\ref{main}$(4)$} assert that
a band complex which is an exceptional $1$-cycle is at the mouth.
However, a band complex at the mouth is not necessarily an exceptional $1$-cycle.

\vskip5pt

$(1)$ \ We take an example from {\rm [ALP]}. Let $A=kQ/I$, where $Q$ is the quiver
$$\xymatrix@R=0.5cm{
1 \ar[dd]_b &  &  3\ar[ld]_f\\
  &  0 \ar[lu]_a \ar[rd]_d &\\
2 \ar[ru]_c  &   &  4\ar[uu]_e
}$$
and $I=\langle ba, cb, ac, ed, fe, df\rangle$. Consider the homotopy band $v=d^{-1}e^{-1}f^{-1}cba$ and
the band complex $P_{m, v, \lambda}$, where $m\in\mathbb{Z}$ and $\lambda\in k^{*}$. Then
$P_{m, v, \lambda}$ is at the mouth of a homogeneous tube, and
$\Hom_{K^b(A\mbox{-}{\rm proj})}(P_{m, v, \lambda},P_{m, v, \lambda}[3])\ne 0$ $({\rm [ALP, 5.17]})$.
Thus $P_{m, v, \lambda}$ is not an exceptional $1$-cycle.

\vskip5pt

$(2)$ \ Let $A=k(\xymatrix{1\ar@<.5ex>[r]^{\alpha}\ar@<-.5ex>[r]_{\beta}&2})$ be the Kronecker algebra.
Consider the homotopy band $w=\beta^{-1}\alpha$ and
the band complex $P_{w, \lambda}$ with $\lambda\in k^{*}$: \ $0\longrightarrow P(2) \stackrel {\alpha + \lambda \beta}\longrightarrow P(1) \longrightarrow 0$. Then
$P_{m, w, \lambda}$ is at the mouth of a homogeneous tube, and $P_{w, \lambda}$ is an exceptional $1$-cycle.

\end{Ex}

\vskip10pt

\vskip10pt

Peng Guo, \ guigui91$\symbol{64}$126.com

Pu Zhang, \ pzhang$\symbol{64}$sjtu.edu.cn

School of Mathematical Sciences, \ Shanghai Jiao Tong University

Shanghai 200240, P. R.\ China


\begin{thebibliography}{99}

\bibitem[APS]{APS} C. Amiot, P. Plamondon, S. Schroll, A complete derived invariant for gentle algebras via winding numbers and arf invariants, arXiv: 1904.02555v3 [math.RT]

\bibitem[AG]{AG} K. K. Arnesen, Y. Grimeland, The Auslander-Reiten components of $K^b({\rm proj}\Lambda)$ for a cluster-tilted algebra of type $\tilde{A}$, J. Algebra Appl. 14(1)(2015), 1550005.

\bibitem[ALP]{ALP} K. K. Arnesen, R. Laking, D. Pauksztello, Morphisms between indecomposable complexes in the bounded derived category of a gentle algebra, J. Algebra 467(2016), 1-46.

\bibitem[ABCP]{ABCP} I. Assem, T. Br\"{u}stle, G. Charbonneau-Jodoin, P.-G. Plamondon, Gentle algebras arising from surface triangulations, Algebra Number Theory 4(2)(2010), 201-229.

\bibitem[AH]{AH} I. Assem, D. Happel, Generalized tilted algebras of type $A_n$, Comm. Algebra 9(1981), 2101-2125.

\bibitem[ASS]{ASS} I. Assem, D. Simson, A. Skowro\'nski,
Elements of the representation theory of associative algebras,
Vol.1: Techniques of representation theory,
Lond. Math. Soc. Students Texts 65,
Cambridge University Press, 2006.

\bibitem[AS]{AS} I. Assem, A. Skowro\'nski, Iterated tilted algebras of type $\widetilde{A_n}$, Math.
Z. 195(2)(1987), 269-290.

\bibitem[AAG]{AAG} D. Avella-Alaminos, C. Geiss, Combinatorial derived invariants for gentle algebras, J. Pure Appl. Algebra 212(1)(2008), 228-243.

\bibitem[BM]{BM} V. Bekkert, H. A. Merklen, Indecomposables in derived categories of gentle algebras, Algebr. Represent. Theory 6(3)(2003), 285-302.

\bibitem[B]{B} G. Bobi\'nski, The almost split triangles for perfect complexes over gentle algebras, J. Pure Appl. Algebra 215(4)(2011), 642-654.

\bibitem[BB]{BB} G. Bobi\'nski, A. B. Buan, The algebras derived equivalent to gentle cluster tilted algebras,
J. Algebra Appl. 11(1)(2012), 1250012.

\bibitem[BGS]{BGS} G. Bobi\'nski, C. Geiss, A. Skowro\'nski, Classification of discrete derived categories, Cent. Eur. J. Math. 2(1)(2004), 19-49.

\bibitem[Boc]{Boc} R. Bocklandt, Graded Calabi Yau algebras of  dimension
$3$, with an appendix ``The signs of Serre functor" by M. Van den
Bergh, J. Pure Appl. Algebra {212}(1)(2008), 14-32.

\bibitem[Br]{Br} S. Brenner, On the kernel of an irreducible map, Linear Algebra and its Applications 365(2003), 91-97.

\bibitem[BPP]{BPP} N. Broomhead, D. Pauksztello, D. Ploog, Discrete derived categories I: homomorphisms, autoequivalences and t-structures, Math. Z. 285(1-2)(2017), 39-89.

\bibitem[BR]{BR} M. C. R. Butler, C. M. Ringel, Auslander-Reiten sequences with few middle terms and applications to string algebras, Comm. Algebra 15(1-2)(1987), 145-179.

\bibitem[CP]{CP} R. Coelho Sim\~oes,  D. Pauksztello, Torsion pairs in a triangulated category generated by a spherical object,  J. Algebra 448(2016), 1-47.

\bibitem[CZ]{CZ}C. Cibils,  P. Zhang,  Calabi-Yau objects in triangulated categories, Trans. Amer. Math. Soc. 361(12)(2009), 6501-6519.

\bibitem[GR]{GR} C. Geiss, I. Reiten, Gentle algebras are Gorenstein, in Representations of algebras and related topics, Fields Inst. Commun., vol. 45, Amer. Math. Soc., Providence, RI, 2005, pp. 129-133.

\bibitem[GZ]{GZ} P. Guo, P. Zhang, Exceptional cycles in the bounded derived categories of quivers, Acta Math. Sinica (English Series) (to appear).

\bibitem[H1]{H1} D. Happel, Triangulated categories in
representation theory of finite dimensional algebras, London Math.
Soc. Lecture Notes Ser. 119, Cambridge Uni. Press,
1988.

\bibitem[H2]{H2} D. Happel, On Gorenstein algebras, in Representation Theory of Finite Groups and Finite-Dimensional Algebra, Progr. Math. 95, Birkha\"user, Basel, 1991, 389-404.

\bibitem[H3]{H3} D. Happel, Auslander-Reiten triangles in derived categories of finite-dimensional algebras, Proc. Amer. Math. Soc. 112(3)(1991), 641-648.

\bibitem[HPR]{HPR} D. Happel, U. Preiser, C. M. Ringel, Vinberg's characterization of Dykin diagrams using subadditive functions with application to DTr-periodic modules, In: Representation theory II (Proc. Second. Internat. Conf., Carleton Univ., Ottawa, 1979), 280-294, Lecture Notes in Math. vol. 832, Springer, Berlin, 1980.

\bibitem[HKR]{HKR} D. Happel, B. Keller, I. Reiten, Bounded derived categories and repetitive algebras, J. Algebra 319(4)(2008), 1611-1635.

\bibitem[HKP1]{HKP1} A. Hochenegger, M. Kalck, D. Ploog, Spherical subcategories in algebraic geometry, Math. Nachr. 289(11-12)(2016), 1450-1465.

\bibitem[HKP2]{HKP2} A. Hochenegger, M. Kalck, D. Ploog, Spherical subcategories in representation theory, Math. Z. 291(2019), 113-147.

\bibitem[K]{K} B. Keller, On triangulated orbit categories, Documenta Math. 10(2005), 551-581.

\bibitem[KYZ]{KYZ} B. Keller, D. Yang, G. Zhou, The Hall algebra of a spherical object, J. Lond. Math. Soc.(2)80(3)(2009), 771-784.

\bibitem [M]{M} H. Meltzer, Tubular mutations, Colloq. Math. 74(2)(1997), 267-274.

\bibitem[N]{N} A. Neeman, Triangulated categories, Ann. Math. Studies 148, Princeton University Press, 2001.

\bibitem[O]{O} S. Opper, On auto-equivalences and complete derived invariants of gentle algebras, arXiv: 1904.04859v1 [math.RT]

\bibitem[OPS]{OPS} S. Opper, P. Plamondon, S. Schroll, A geometry model for the derived category of gentle algebras, arXiv: 1801.09659v4 [math.RT].

\bibitem[PS]{PS} Z. Pogorzaly, A. Skowro\'nski, Selfinjective biserial standard algebras, J. Algebra 138(1991), 491-504.

\bibitem[RV]{RV} I. Reiten, M. Van den Bergh, Noether hereditary abelian categories satisfying Serre functor, J. Amer. Math. Soc. 15(2)(2002), 295-366.

\bibitem[Rm]{Rm} C. Riedtmann, Algebren, Darstellungsk\"{o}cher, \"{U}berlagerungen und zur\"{u}ck, Comment. Math. Helv. 55(2)(1980), 199-224.

\bibitem[Ro]{Ro} R. Rouquier, Dimensions of triangulated categories, J. K-Theory 1(2008), 193-256.

\bibitem[Sch]{Sch} S. Scherotzke, Finite and bounded Auslander-Reiten components in the derived category, J. Pure Appl. Algebra 215(2011), 232-241.

\bibitem[SZ]{SZ} J. Schr\"{o}er, A. Zimmermann, Stable endomorphism algebras of modules over speicial biserial algebras, Math. Z. 244(2003), 515-530.

\bibitem[ST]{ST} P. Seidel, R. Thomas, Braid group actions on derived categories of coherent sheaves, Duke Math. J. 108 (1)(2001), 37-108.

\bibitem[V]{V} D. Vossieck, The algebras with discrete derived category, J. Algebra 243(1)(2001), 168-176.

\bibitem[XZ]{XZ} J. Xiao, B. Zhu, Locally finite triangulated categories, J. Algebra 290(2)(2005), 473-490.

\end{thebibliography}
\end{document}